\numberwithin{equation}{section}
\newtheorem{Thm}[equation]{Theorem}
\newtheorem*{THM}{Theorem}
\newtheorem{Lem}[equation]{Lemma}
\newtheorem{Cor}[equation]{Corollary}
\newtheorem{Prop}[equation]{Proposition}
\newtheorem{Conj}[equation]{Conjecture}
\theoremstyle{remark}
\newtheorem{Rem}[equation]{Remark}
\theoremstyle{remark}
\newtheorem{example}[equation]{Example}
\theoremstyle{definition}
\theoremstyle{definition}
\theoremstyle{definition}
\newtheorem{Def}[equation]{Definition}
\newcommand{\N}{\mathbb N}
\newcommand{\Z}{\mathbb Z}
\newcommand{\C}{\mathbb C}
\newcommand{\fg}{\mathfrak{g}}
\newcommand{\fh}{\mathfrak{h}}
\newcommand{\fb}{\mathfrak{b}}
\newcommand{\fl}{\mathfrak{l}}
\newcommand{\fu}{\mathfrak{u}}
\newcommand{\B}{\mathcal{B}}
\renewcommand{\N}{\mathcal{N}}
\newcommand{\X}{\mathcal{X}}
\DeclareMathOperator{\Ad}{Ad}
\DeclareMathOperator{\hgt}{ht}
\DeclareMathOperator{\ad}{ad}
\newcommand{\Cl}{O}
\newcommand{\R}{R}
\begin{document}

\title[The singular locus of semisimple Hessenberg varieties]{The singular locus of semisimple Hessenberg varieties}
\author{Erik Insko}
\address{Dept. of Mathematics, Florida Gulf Coast University, Fort Myers, FL 33965 }\email{einsko@fgcu.edu}
\author{Martha Precup}
\address{Dept. of Mathematics, Northwestern University, Evanston, IL 60208}\email{martha.precup@northwestern.edu}

%
%

%
\maketitle

\begin{abstract}  Although regular semisimple Hessenberg varieties are smooth and irreducible, semisimple Hessenberg varieties are not necessarily smooth in general. In this paper we determine the irreducible components of semisimple Hessenberg varieties corresponding to the standard Hessenberg space.  We prove that these irreducible components are smooth and give an explicit description of their intersections, which constitute the singular locus.  We conclude with an example of a semisimple Hessenberg variety corresponding to another Hessenberg space which is singular and irreducible, showing that results of this nature do not hold for all semisimple Hessenberg varieties.
\end{abstract}


\section{Introduction}

This paper initiates the study of the irreducible components and singular locus of semisimple Hessenberg varieties.  Our main results prove that semisimple Hessenberg varieties corresponding to the standard Hessenberg space (see Equation \eqref{eq:standard}) have smooth irreducible components. We also give an explicit description of these irreducible components and their intersections using the associated GKM graph.  

Hessenberg varieties are a collection of subvarieties of the full flag variety that generalize both Springer fibers and toric varieties associated to the Weyl chambers of the associated root system.  These varieties were first defined as subvarieties of the flag variety by DeMari, Procesi, and Shayman in \cite{DMPS92}, and they appear in connection with the study of quantum cohomology of partial flag varieties \cite{Kos96,Rie03}, geometric representation theory \cite{Spr76,Pro90,Ste92,Tym08,Tef11}, numerical analysis \cite{DMS88}, and Schubert calculus \cite{AT10, Ins15, IT16, HT11, Dre15}.  

Let $G$ be a linear, reductive algebraic group over $\C$, $B$ be a Borel subgroup, and $\B = G/B$ denote the corresponding flag variety. As usual, $\mathfrak g$ and $\mathfrak b$ denote the Lie algebras of $G$ and $B$ respectively, and $W$ is the Weyl group of $G$. We define a  \emph{Hessenberg space} $H$ to be a subspace of $\fg$ that contains $\fb$ and is closed under the Lie bracket with any element of $\fb$. In this paper, we focus our attention on the \emph{standard Hessenberg space} given by 
\begin{equation} H_{\Delta}:= \fb \oplus \bigoplus_{\alpha\in \Delta} \fg_{-\alpha}  \label{eq:standard} \end{equation} where $\Delta$ denotes the simple roots of $\fg$ and $\fg_{-\alpha}$ is the root space associated  to the negative simple root $-\alpha \in -\Delta$.

Given an element $X \in \mathfrak g$ and a fixed Hessenberg space $H$, the \emph{Hessenberg variety} $\B(X,H)$ is the subvariety of $\B$ consisting of all cosets $gB$ such that $Ad(g^{-1})(X)$ is an element of $H$.  We say that $\B(X,H)$ is semisimple when $X$ is a semisimple element of $\fg$ and that $\B(X,H)$ is nilpotent when $X$ is a nilpotent element of $\fg$.  Similarly, if $X\in \fg$ is a regular element we call the Hessenberg variety $\B(X,H)$ a regular Hessenberg variety.  As another example, when $X\in \fg$ is nilpotent and $H=\fb$ the nilpotent Hessenberg variety $\B(X,\fb)$ is the Springer fiber of $X$.  

Since DeMari, Procesi, and Shayman initiated their study of these varieties \cite{DMPS92}, the geometric and topological properties of Hessenberg varieties have received considerable attention leading to fruitful and surprising discoveries.  Several authors have proven Hessenberg varieties are paved by affines for increasingly general classes of Hessenberg varieties \cite{DMPS92,Tym06,Pre13}. These pavings show that some Hessenberg varieties are equivariantly formal \cite{GKM98,Tym05} and yield methods for computing their Betti numbers and other topological invariants using combinatorial formulas.

If $X\in \fg$ is a regular nilpotent element, the regular nilpotent Hessenberg variety $\B(X,H_{\Delta})$ is called the Peterson variety.  Peterson and Kostant used Peterson varieties to construct the quantum cohomology of the flag variety \cite{Kos96}.  More recently, several authors have studied equations defining local coordinate patches to analyze singular loci and describing some Hessenberg varieties as local complete intersections \cite{IY12, ADGH16}, including the Peterson variety.  We discuss these methods in Sections \ref{sec: singular locus} and \ref{sec: examples} below.  Understanding these geometric properties helps identify possible obstructions to studying (equivariant) cohomology, intersection theory, K-theory, and Newton-Okounkov bodies for Hessenberg varieties.

 DeMari, Procesi, and Shayman showed that regular semisimple Hessenberg varieties are smooth and equidimensional for any Hessenberg space $H$.  They also showed that  the regular semisimple Hessenberg variety corresponding to the standard Hessenberg space is in fact the toric variety associated to the Weyl chambers \cite[Theorem 11]{DMPS92}.  The Weyl group action on the cohomology of this variety had been studied independently by Procesi and Stembridge \cite{Pro90,Ste92}.  There is an action of the Weyl group on the cohomology of regular semisimple Hessenberg varieties defined by Tymoczko \cite{Tym08} which generalizes the Weyl group action in the toric variety case.  This representation has gained recent notoriety due to a conjecture posed by Shareshian and Wachs in 2011 which was proved by Brosnan and Chow in \cite{BC15} and again by Guay-Paquet \cite{GP15} using different methods.

The geometry of regular semisimple Hessenberg varieties has received a great deal of attention in the literature, due in large part to the representation discussed in the previous paragraph.  However, there are only a few papers which consider non-regular Hessenberg varieties specifically (such as \cite{Pre15} and \cite{Tym06-2}).  In this manuscript we focus primarily on non-regular semisimple Hessenberg varieties corresponding to the standard Hessenberg space.  We prove that their geometry is determined by the combinatorics of the Weyl group and its cosets.  Our main result is as follows.

 \begin{THM}  Let $S\in \fg$ be a non-regular semisimple element of $\fg$ and $W_M$ be the Weyl group of the centralizer of $S$ in $G$. The semisimple Hessenberg variety $\B(S,H_{\Delta})$ is a union of irreducible components
 \[
 	\B(S,H_{\Delta}) = \bigcup_{v\in \mathcal{S}} \mathcal{X}_v
 \]
 where $\mathcal{S}\subseteq W$ is a subset of coset representatives for $W_M\backslash W$.  Each of the irreducible components is smooth so the singular locus of $\B(S,H_{\Delta})$ consists precisely of those points in the intersection of two or more irreducible components.
 \end{THM}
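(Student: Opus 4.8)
We may assume $S\in\fh$ lies in the closure of the dominant Weyl chamber, so the centralizer $\fm=\fg^{S}$ is the standard Levi subalgebra with simple roots $\Delta_M\subseteq\Delta$ and Weyl group $W_M$, with associated standard parabolic $P\supseteq B$ and Levi subgroup $M$. The first step is to set up the torus-equivariant geometry. Since $\fh\subseteq H_\Delta$ and $\Ad(\dot w^{-1})(S)\in\fh$ for every representative $\dot w\in N(T)$ of $w\in W$, all of $W$ lies in $\B(S,H_\Delta)^{T}$, so $\B(S,H_\Delta)^{T}=W$; together with the existence of a $T$-stable affine paving $\{\mathcal C_w\}_{w\in W}$ of $\B(S,H_\Delta)$ (see \cite{Tym06} in the regular semisimple case and \cite{Pre13} in general), this presents $\B(S,H_\Delta)$ as a GKM variety whose moment graph is a subgraph of that of $\B$. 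I would record the edge criterion explicitly: the $T$-invariant curve of $\B$ through $wB$ and $ws_\alpha B$ survives in $\B(S,H_\Delta)$ precisely when the corresponding root lies in the set $\Phi^{+}\cup(-\Delta)$ of roots of $H_\Delta$. The combinatorial upshot is that the reflecting hyperplanes $\ker\alpha$, $\alpha\in\Phi_M$, become collapsed in the resulting graph, which is the shadow of the coset space $W_M\backslash W$.

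Next I would identify the irreducible components. Each $\mathcal C_w$ is an affine space, so $\overline{\mathcal C_w}$ is irreducible, and since $\B(S,H_\Delta)=\bigcup_w\overline{\mathcal C_w}$ the irreducible components of $\B(S,H_\Delta)$ are exactly the maximal members of $\{\overline{\mathcal C_w}\}$ under inclusion; write $\mathcal X_v:=\overline{\mathcal C_v}$ for these. It remains to check that the set $\mathcal S$ of such maximal $v$ may be chosen among representatives for $W_M\backslash W$ — equivalently, that distinct components have their attracting $T$-fixed point in distinct $W_M$-cosets. I expect this from the moment graph: computing $\dim\mathcal C_w$ as a count of retained down-edges at $w$, and using that the walls $\ker\alpha$ ($\alpha\in\Phi_M$) are collapsed, one shows the cell closures within a fixed $W_M$-coset have a common maximal element, so at most one maximal cell meets each coset. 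This is where the combinatorics of $W_M$ enters, and it also yields the explicit description of the intersections $\mathcal X_v\cap\mathcal X_{v'}$ as common sub-moment-graphs.

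The main obstacle is proving each $\mathcal X_v$ smooth. My plan is to exhibit $\mathcal X_v$ as a fiber bundle whose fiber is the flag variety $M/(M\cap B)$ of the Levi and whose base $Y_v$ is a smooth complete toric variety — the toric variety of the subfan of the Weyl fan lying in the central directions of $\fm$ along which $S$ can move, which is smooth by the DeMari--Procesi--Shayman analysis in the regular semisimple case \cite[Theorem 11]{DMPS92}. The key point is that, restricted to a single component, the condition cutting out $\B(S,H_\Delta)$ decouples: along the $M$-flag directions it is vacuous (because $S$ is central in $\fm$), while along the remaining directions it reduces to the toric DMPS condition. Concretely, I would describe $\mathcal X_v$ using the flag of $S$-eigenspaces — for $G=GL_n$, as the variety of flags satisfying a $v$-dependent chain of incidence conditions with that reference flag, visibly a tower of projective bundles — and then verify this variety equals $\overline{\mathcal C_v}$. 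A bundle with smooth fiber over a smooth base is smooth; as a consistency check the moment graph of $\mathcal X_v$ should have exactly $\dim\mathcal X_v$ edges at each vertex. The delicate ingredient is that this decoupling genuinely uses $H=H_\Delta$; indeed the final example of the paper shows the conclusion can fail for other Hessenberg spaces.

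Granting that the $\mathcal X_v$ ($v\in\mathcal S$) are the irreducible components of $\B(S,H_\Delta)$ and each is smooth, the statement about the singular locus is then formal. If $p$ lies in exactly one component $\mathcal X_v$, then $\bigcup_{v'\neq v}\mathcal X_{v'}$ is closed and avoids $p$, so a neighborhood of $p$ in $\B(S,H_\Delta)$ is an open subset of the smooth variety $\mathcal X_v$ and $p$ is a smooth point. Conversely, if $p\in\mathcal X_v\cap\mathcal X_{v'}$ with $v\neq v'$, then every neighborhood of $p$ meets both $\mathcal X_v\setminus\mathcal X_{v'}$ and $\mathcal X_{v'}\setminus\mathcal X_v$, each dense in its component, so $\mathcal O_{\B(S,H_\Delta),p}$ is not a domain and hence not regular, whence $p\in\Sing\B(S,H_\Delta)$. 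Thus $\Sing\B(S,H_\Delta)=\bigcup_{v\neq v'}(\mathcal X_v\cap\mathcal X_{v'})$, the asserted description of the singular locus.
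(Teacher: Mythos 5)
Your overall architecture is reasonable (components are maximal cell closures indexed by cosets of $W_M$; each component is assembled from the flag variety of a Levi and a smooth toric piece; the singular-locus statement is then formal, and your last paragraph is correct), but the two load-bearing steps are asserted rather than proved. First, the ``decoupling'' you invoke for smoothness is precisely the hard content. What is actually true is that for $v\in{}^M W$ the closure $\overline{C_v\cap\B(S)}$ is a translate by $x_v$ of the \emph{regular semisimple} Hessenberg variety $\B_{L}(S_v)$ inside the flag variety of the Levi $L=L_v$ whose simple roots are the right-descent set $R(v)=N(v)\cap\Delta$ of $v$ (with $v=x_vw_v$, $w_v$ the longest element of $W_L$, and $S_v=x_v^{-1}\cdot S$). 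Proving this requires showing that any point $uvB\in C_v\cap\B(S)$ has its unipotent part confined to $x_vU_{L}x_v^{-1}$, i.e.\ that no coordinates in the directions $N(x_v^{-1})$ survive; the paper does this by a nontrivial induction on root heights using the structure of $\ad_{-X}$. Nothing in your proposal addresses why those coordinates vanish; your concrete plan (incidence conditions with the flag of $S$-eigenspaces, a tower of projective bundles) is a type-$A$ sketch that is never matched against $\overline{C_v\cap\B(S)}$. Moreover your identification of the toric piece is off: it is the toric variety of the Coxeter fan of $W_{L_v}$, where $\Delta_{L_v}=R(v)$ varies with $v$ through its descents, not a fan in ``central directions of $\fm$''; and in the natural structure the Levi flag variety $M/B_M$ plays the role of base (via $\X_v=M(\overline{C_v\cap\B(S)})$, a $B_M$-stable closed set swept by $M$), with the toric variety as fiber --- the paper implements this locally, showing each patch of $\X_v$ at a $T$-fixed point is $U_M^-\times$ (a patch of $\B_{L}(S_v)$).

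Second, your route to ``one maximal cell per $W_M$-coset'' via the moment graph does not work as stated: GKM data (fixed points and one-dimensional orbits, with your edge criterion additionally omitting the edges labeled by $\gamma\in\Phi_M^+$, where $\gamma(S)=0$) gives dimensions of cells but does not determine containments among cell closures, and dimension maximality within a coset does not imply that the other cell closures of that coset sit inside the top one. The paper's mechanism is geometric: $\overline{C_v\cap\B(S)}$ is $B_M$-stable, so its $M$-orbit is closed (Tauvel--Yu), this orbit decomposes as $\bigsqcup_{y\in W_M}C_{yv}\cap\B(S)$, and a separate argument using that $\overline{C_v\cap\B(S)}$ meets only Schubert cells $C_\tau$ with $\tau\in{}^M W$ shows $\X_\tau\subseteq\X_v$ forces $C_\tau\cap\B(S)\subseteq\overline{C_v\cap\B(S)}$, which is then controlled by the combinatorial set $\Cl(v)$. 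Without an argument replacing these steps (or the height induction above), the proposal has genuine gaps at exactly the points where the theorem's content lies; as you yourself note, the restriction to $H_\Delta$ must enter, and it enters through these omitted arguments.
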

 
 The statement of this theorem is a combination of Theorems \ref{thm: irreducible components} and \ref{thm: sm-comp} below.  A precise description of the irreducible components $\X_v$ and the elements in $\mathcal{S}$ are given in Theorem~\ref{thm: irreducible components}.   Following the proof of Theorem~\ref{thm: sm-comp}, we describe the GKM graphs of the irreducible components and their intersections as subgraphs of the GKM graph of $\B(S,H_{\Delta})$.

As Tymoczko notes in \cite{Tym06-2}, ``It is usually difficult to identify the irreducible components of Hessenberg varieties.''  This makes our results all the more surprising.  In the same paper, Tymoczko poses the following questions.  
\begin{itemize}
\item (Question~5.2) Let $X$ be any linear operator. If the Hessenberg space $H$ is in banded form, is the Hessenberg variety $\B(X,H)$ necessarily pure-dimensional?
\item (Question~5.4) Are all semisimple Hessenberg varieties smooth?
\end{itemize}
 Our description of the irreducible components of non-regular semisimple Hessenberg varieties corresponding to the standard Hessenberg space (which is in banded Hessenberg form) shows that the answer to both of these questions is no.  
 
The rest of this article is structured as follows.  In the second section we provide a survey of notation and known results that will be used to prove our main theorems.  In Section \ref{sec: irreducible components} we prove Theorem~\ref{thm: irreducible components}, which describes the irreducible components of semisimple Hessenberg varieties corresponding to the standard Hessenberg space.  Section \ref{sec: singular locus} contains the proof Theorem~\ref{thm: sm-comp}, proving that these irreducible components are smooth varieties.  In Corollary~\ref{cor: irreducible components} we use Theorems \ref{thm: irreducible components} and \ref{thm: sm-comp} to calculate the singular locus of these varieties. The end of Section \ref{sec: singular locus} gives a description of the GKM  graph of the singular locus of $\B(S,H_{\Delta})$ as a subgraph of the GKM graph of $\B(S,H_{\Delta})$ and includes many examples.  While many of our arguments rely heavily on Lie-theoretic terminology, we hope that the reader who is more interested in studying the combinatorics and GKM theory of Hessenberg varieties will find this subsection visually appealing.

Finally in Section \ref{sec: examples} we describe how to use commutative algebra and computational software to analyze the geometry of semisimple Hessenberg varieties associated to any Hessenberg space for $G=GL_n(\C)$ using similar methods as \cite{ADGH16, IY12, WY12}. This approach is used to give an example of an irreducible semisimple Hessenberg variety that is singular, showing that the results of Sections \ref{sec: irreducible components} and \ref{sec: singular locus} need not hold for arbitrary semisimple Hessenberg varieties.

\subsection{Acknowledgements}  The authors thanks both Samuel Evens and Alexandar Woo for conversations which clarified and shortened some of the arguments below. 


\section{Background and Notation} \label{sec:background}

We now state results and definitions from the literature which will be used in later sections.  All algebraic groups in this paper are assumed be complex and linear.  Let $G$, $\fg$, and $\B$ be as in the introduction.

Let $B\subset G$ be a fixed Borel subgroup and $B_-$ denote the opposite Borel subgroup so that $T=B\cap B_-$ is a torus. Denote the Lie algebra of $B$ by $\fb$ and the Lie algebra of $T$ by $\fh$.  Write $U$ for the maximal unipotent subgroup of $B$ and let $\fu$ denote the Lie algebra of $U$.  Similarly, $U^-$ denotes the maximal unipotent subgroup of the opposite Borel.

Let $\Phi$ be the root system associated to $B$, with $\Phi^+$, $\Phi^-$, and $\Delta$ the subsets of positive, negative and simple roots in $\Phi$, respectively.  Denote the negative simple roots by $\Delta^-$.   Each positive root $\gamma\in \Phi^+$ can be written uniquely as $\gamma=\sum_{\alpha}n_{\alpha}\alpha$ for $n_{\alpha}\in \mathbb{Z}_{\geq0}$.  The \emph{height} of $\gamma$ is $\hgt(\gamma) = \sum_{\alpha\in \Delta}n_{\alpha}$.  Fix root vectors $E_{\gamma}$ in each root space $\fg_{\gamma}$ such that $\ad_S(E_{\gamma}) = [S,E_{\gamma}] =\gamma(S)E_{\gamma}$ for all $S\in \fh$ and
\begin{eqnarray}\label{eq: Lie bracket}
\ad_{E_{\gamma}}(E_{\beta}) = [E_{\gamma}, E_{\beta}] = \left\{  \begin{array}{ll} m_{\gamma,\beta}E_{\gamma+\beta} & \textup{if } \gamma+\beta\in \Phi^+\\0 & \textup{otherwise} \end{array}\right.
\end{eqnarray}
for all $\gamma, \beta\in \Phi^+$ where $m_{\gamma, \beta}$ is a nonzero integer.  Let $U_{\gamma}=\exp(x_{\gamma}E_{\gamma})$ for $x_{\gamma}\in \C$ be the 1-dimensional unipotent subgroup corresponding to $\gamma\in \Phi$.
 
The Weyl group of $G$ is $W=N_G(T)/T$.  Throughout this paper, we fix a representative ${w}\in G$ such that $wU_{\gamma}w^{-1}=U_{w(\gamma)}$ for each $w\in W$ and use the same letter for both.  Write $s_{\gamma}$ for the reflection corresponding to $\gamma\in \Phi$.  The Weyl group of $G$ is generated by the simple refections $s_i = s_{\alpha_i}$ for each $\alpha_i\in \Delta$.   Given $w\in W$, the {\em length} of $w$ is the number of simple reflections in any reduced word $w=s_{i_1}s_{i_2}\cdots s_{i_{k}}$ for $w$, denoted by $\ell(w)$.

Our main example throughout this paper is the case in which $G=GL_n(\C)$ is the group of $n\times n$ invertible matrices and $\fg=\mathfrak{gl}_n(\C)$ is the collection of $n\times n$ matrices, also known as the type $A$ case.  In this setting, we take $B$ to be the subgroup of invertible upper triangular matrices, $B_-$ to be the opposite subgroup of invertible lower triangular matrices, and $T$ to be diagonal subgroup.  The Weyl group of $GL_n(\C)$ is the symmetric group $S_n$.

\subsection{Hessenberg varieties}
The main focus of this paper is a collection of subvarieties of $\B$ which we will now define.  

\begin{Def} A subspace $H\subseteq \fg$ is a {\em Hessenberg space} with respect to $\fb$ if $\fb \subseteq H$ and $[\fb, H]\subseteq H$.
\end{Def}

If $H\subset \fg$ is such a Hessenberg space then
\[
	H= \fb \oplus \bigoplus_{\gamma\in \Phi_H^-} \fg_{\gamma}
\]
for a subset $\Phi_H^-=\{ \gamma\in \Phi^-: \fg_{\gamma}\subseteq H \}$ of negative roots.  A Hessenberg variety is a subvariety of the flag variety defined as follows.  

\begin{Def}  Fix $X\in \fg$ and a Hessenberg space $H$ with respect to $\fb$.  
The {\em Hessenberg variety} associated to $X$ and $H$ is
\[
	\B(X,H)= \{ gB \in \B : g^{-1}\cdot X\in H \}
\]
where $g\cdot X$ denotes the adjoint action $Ad(g)(X)$.  
\end{Def}

In later sections we will specialize to the case in which $\Phi_H^- = \Delta^{-}$ and write $H_{\Delta}$ for this Hessenberg space, which we refer to as the \emph{standard Hessenberg space}.  

\begin{example}\label{ex: standard hess}  When $\fg = \mathfrak{gl}_n(\C)$, the standard Hessenberg space is the subspace of matrices
\[
	H=\left\{  \begin{bmatrix} a_{11} & a_{12} & a_{13} & \hdots & a_{1\,n-1}&  a_{1n}\\ a_{21} & a_{22} & a_{23} & \hdots & a_{2\,n-1} & a_{2n}\\ 0 & a_{32} & a_{33} & \hdots &  a_{3\,n-1} & a_{3n}\\ 0 & 0 & a_{43} & \hdots & a_{4\,n-1} & a_{4n}\\ \vdots & \vdots & \vdots & \ddots & \vdots& \vdots \\ 0 & 0 & 0 & \hdots & a_{n \,n-1} &a_{nn}  \end{bmatrix} : a_{i\,j}\in \C, 1\leq i \leq n, i-1\leq j \leq n  \right\}\subseteq \mathfrak{gl}_n(\C).
\]
\end{example}

When $S$ is a semisimple element of $\fg$, we let $M=Z_G(S)$ be the centralizer of $S$, so $M$ is a Levi subgroup of $G$, i.e. it is a closed reductive subgroup of $G$. $M$ acts on $\B(S,H)$ by translation.  Indeed, if $gB\in \B(S,H)$ then $mgB\in \B(S,H)$ for all $m\in M$ since 
\[
	(mg)^{-1}\cdot S= g^{-1}m^{-1}\cdot S = g^{-1}\cdot S\in H.
\]
If $\X\subseteq \B(S,H)$ is a subvariety, we denote the $M$-orbit of $\X$ by $M(\X)$.  Since conjugate elements of $\fg$ correspond to isomorphic Hessenberg varieties (see \cite[Remark 2.3]{Pre13}) we may assume without loss of generality that $M$ is a standard Levi subgroup of $G$.

\subsection{Cosets in the Weyl group}

Each standard Levi subgroup $L\subseteq G$ corresponds to a unique subset $\Delta_L\subseteq \Delta$ of simple roots.  We let $B_L=L\cap B$ denote the Borel subgroup of $L$ determined by $B$ and $U_{L}=L\cap U$ be the maximal unipotent subgroup of $B_L$ with Lie algebra $\fu_L$.  Similarly, let $U_L^-=L\cap U^-$ be the maximal unipotent subgroup of the opposite Borel, $L\cap B_-$. The Weyl group of $L$ is $W_L = \left< s_{\alpha} : \alpha\in \Delta_L \right>\subseteq W$.  Let $\Phi_L$ denote the root subsystem of $\Phi$ associated to $B_L\subset L$ with positive roots $\Phi_L^+$ and negative roots $\Phi_L^-$.  

\begin{example}  If we take $M=Z_G(S)$ as above, then $\Delta_M = \left< \alpha\in \Delta : \alpha(S)=0 \right>$ so the root system of $M=Z_G(S)$ is uniquely defined by the condition that $\gamma\in \Phi_M$ if and only if $\gamma(S)=0$.
\end{example}

For each $w\in W$, let 
\[
N(w) = \{ \gamma \in \Phi^+ : w (\gamma) \in \Phi^-\}.
\] 
We call $N(w)$ the \emph{inversion set} of $w$.  It is a well-known fact that $|N(w)|=|N(w^{-1})|=\ell(w)$.  We similarly define $N^-(w) = \{ \gamma\in \Phi^- : w(\gamma)\in \Phi^+ \}$.  If $\alpha\in N(w^{-1})\cap \Delta$, then $s_{\alpha}$ is called a \emph{left descent} of $w$.  Similarly, if $\alpha \in N(w)\cap \Delta$, then $s_{\alpha}$ is a \emph{right descent} of $w$.

\begin{Rem}\label{rem: descents}  
The elements of $N(w^{-1})$ can be characterized as follows: $\gamma\in N(w^{-1})$ if and only if $\ell(s_{\gamma}w) < \ell(w)$ (see \cite[Section 1.6]{Hum90}). 
\end{Rem}

Consider the sets
\[
	{{^L}W} = \{ v\in W : N(v^{-1}) \subseteq \Phi^+ \setminus \Phi_L^+ \}
\]
and
\[
	W^L = \{ v\in W: N(v) \subseteq \Phi^+ \setminus \Phi_L^+ \}.
\]
The elements of ${{^L}W}$ and $W^L$ form a set of shortest-length coset representatives for $W_L \backslash W$ and $W/W_L$ respectively in the following sense (see \cite[Proposition 2.4.4]{BB05}). 

\begin{Lem}\label{lem: w decomp}  
Each $w\in W$ can be written uniquely as 
\begin{itemize}
\item $w=yv$ for some $y\in W_L$ and $v\in {{^L}W}$, and
\item $w=v'y'$ for some $y'\in W_L$ and $v'\in {W^L}$
\end{itemize}
such that $\ell(w) = \ell(y) + \ell(v)= \ell(v')+\ell(y')$.
\end{Lem}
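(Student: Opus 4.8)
The plan is to prove the first decomposition $w = yv$ with $y \in W_L$, $v \in {}^L W$, and $\ell(w) = \ell(y) + \ell(v)$; the second statement follows by applying the first to $w^{-1}$ and then taking inverses (noting that $v \in {}^L W \iff v^{-1} \in W^L$, and $\ell(w) = \ell(w^{-1})$). For existence, I would argue by induction on $\ell(w)$. If $w \in {}^L W$ already, take $y = e$ and $v = w$. Otherwise, by definition of ${}^L W$ there is a simple root $\alpha \in \Delta_L$ with $\alpha \in N(w^{-1})$, i.e. $s_\alpha$ is a left descent of $w$, so by Remark~\ref{rem: descents} we have $\ell(s_\alpha w) = \ell(w) - 1$. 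Apply the inductive hypothesis to $s_\alpha w$ to write $s_\alpha w = y' v$ with $y' \in W_L$, $v \in {}^L W$, and $\ell(s_\alpha w) = \ell(y') + \ell(v)$. Then $w = (s_\alpha y') v$ with $s_\alpha y' \in W_L$, and the length count gives $\ell(w) = \ell(s_\alpha w) + 1 = \ell(y') + \ell(v) + 1 \geq \ell(s_\alpha y') + \ell(v) \geq \ell(w)$, using subadditivity of length and the fact that length is additive along any product of elements the other way is not automatic --- so I would instead note $\ell(s_\alpha y') \leq \ell(y') + 1$ forces $\ell(s_\alpha y') = \ell(y') + 1$ and hence $\ell(w) = \ell(s_\alpha y') + \ell(v)$, the desired additivity.

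For uniqueness, suppose $w = y_1 v_1 = y_2 v_2$ with $y_i \in W_L$, $v_i \in {}^L W$, and length-additivity in both factorizations. Set $u = y_2^{-1} y_1 \in W_L$, so $v_2 = u v_1$. I would show $u = e$. The key point is the characterization of ${}^L W$ via inversion sets: since $v_1 \in {}^L W$, its left inversion set $N(v_1^{-1})$ is disjoint from $\Phi_L^+$, whereas any nontrivial $u \in W_L$ has a left descent $s_\beta$ with $\beta \in \Delta_L$, meaning $\beta \in N(u^{-1}) \subseteq \Phi_L^+$. A short computation with inversion sets --- the standard formula $N((uv_1)^{-1}) \subseteq N(v_1^{-1}) \cup v_1^{-1}(N(u^{-1}))$ together with the length-additivity $\ell(u v_1) = \ell(u) + \ell(v_1)$ forcing this to be an equality of the appropriate sizes --- shows that $v_2 = u v_1 \in {}^L W$ would then fail unless $u = e$ (because $\beta$, or $v_1^{-1}(\beta) \in \Phi_L^+$ since $W_L$ preserves $\Phi_L$ and $v_1$ has no inversions in $\Phi_L^+$... actually one checks $v_1^{-1}(\beta)\in\Phi_L$ and it lies in $N(v_2^{-1})$, contradicting $v_2 \in {}^L W$). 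Hence $u = e$, so $y_1 = y_2$ and $v_1 = v_2$.

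The main obstacle I anticipate is bookkeeping the inversion-set identities carefully enough to make the uniqueness argument airtight: one must track how $N(v_2^{-1})$ decomposes in terms of $N(u^{-1})$ and $N(v_1^{-1})$ under the hypothesis of length-additivity, and verify that a left descent of $u$ in $\Delta_L$ really does produce a forbidden element of $\Phi_L^+$ inside $N(v_2^{-1})$. Since this is a standard fact about parabolic quotients of Coxeter groups (it is, after all, quoted from \cite[Proposition 2.4.4]{BB05}), in the write-up I would either cite that reference directly or reproduce only the short inversion-set computation, rather than redeveloping the theory of minimal coset representatives from scratch.
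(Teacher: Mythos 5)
The paper never proves this lemma: it is stated as a quoted fact with a pointer to \cite[Proposition 2.4.4]{BB05}, so there is no in-paper argument to compare against. Your sketch is the standard textbook proof of that fact, and its overall architecture is sound: induction on $\ell(w)$ for existence (stripping off a left descent in $\Delta_L$, with the squeeze $\ell(w)\le\ell(s_\alpha y')+\ell(v)\le\ell(y')+1+\ell(v)=\ell(w)$ forcing additivity), the second bullet by applying the first to $w^{-1}$ and using $v\in{}^L W\iff v^{-1}\in W^L$, and uniqueness by showing $u=y_2^{-1}y_1\in W_L$ must be trivial. One small point you elide in the existence step: if $w\notin{}^L W$ you only get some $\gamma\in\Phi_L^+\cap N(w^{-1})$, and you need the (easy, standard) observation that if $w^{-1}$ sent every $\alpha\in\Delta_L$ to a positive root it would send all of $\Phi_L^+$ to positive roots, to conclude a \emph{simple} $\alpha\in\Delta_L\cap N(w^{-1})$ exists.

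The uniqueness paragraph, however, contains concrete misstatements that you would have to repair. The containment you quote is wrong: for $v_2=uv_1$ the correct inclusion is $N(v_2^{-1})=N(v_1^{-1}u^{-1})\subseteq N(u^{-1})\cup uN(v_1^{-1})$, with equality and disjointness when $\ell(uv_1)=\ell(u)+\ell(v_1)$ (this is exactly Lemma~\ref{lem: inversion set decomp}); the formula you wrote, $N((uv_1)^{-1})\subseteq N(v_1^{-1})\cup v_1^{-1}N(u^{-1})$, is false (e.g.\ $u=s_1$, $v_1=s_2$ in type $A_2$). Likewise the parenthetical claim that $v_1^{-1}(\beta)\in\Phi_L$ is false in general, since $v_1\notin W_L$ need not preserve $\Phi_L$, and it is $\beta$ itself, not $v_1^{-1}(\beta)$, that must be exhibited in $N(v_2^{-1})$. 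The correct two-line computation is: pick $\beta\in\Delta_L\cap N(u^{-1})$ (a left descent of $u\ne e$); then $u^{-1}(\beta)\in\Phi_L^-$ since $W_L$ preserves $\Phi_L$, and because $N(v_1^{-1})\cap\Phi_L^+=\emptyset$ the element $v_1^{-1}$ maps $-u^{-1}(\beta)\in\Phi_L^+$ to a positive root, so $v_2^{-1}(\beta)=v_1^{-1}u^{-1}(\beta)\in\Phi^-$; hence $\beta\in N(v_2^{-1})\cap\Phi_L^+$, contradicting $v_2\in{}^L W$. Note this argument needs no length-additivity hypothesis at all, so uniqueness of the pair $(y,v)$ holds outright. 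With that substitution your plan is complete; alternatively, citing \cite[Proposition 2.4.4]{BB05}, as the paper does, is of course sufficient.
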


We will also use the following standard fact about inversion sets; especially in the context of the previous result. 

\begin{Lem}\label{lem: inversion set decomp} 
Let $w\in W$ and $w=yv$ for $y,v\in W$ such that $\ell(w)=\ell(y)+\ell(v)$.  Then $N(w^{-1}) = N(y^{-1}) \sqcup y N(v^{-1})$ and $N(w)=N(v)\sqcup v^{-1}N(y)$.
\end{Lem}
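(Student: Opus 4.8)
The plan is to prove the first identity, $N(w^{-1}) = N(y^{-1}) \sqcup y\,N(v^{-1})$, directly from the standard root-sequence description of the inversion set attached to a reduced word, and then to read off the second identity, $N(w) = N(v)\sqcup v^{-1}N(y)$, by applying the first one to the factorization $w^{-1} = v^{-1}y^{-1}$.

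First I would recall the following standard fact (see, e.g., \cite[\S1.6]{Hum90} or \cite[Ch.~1]{BB05}): if $u = s_{i_1}s_{i_2}\cdots s_{i_k}$ is a reduced expression, then
\[
	N(u^{-1}) = \{\, \beta_t : 1\leq t\leq k \,\}, \qquad \beta_t := s_{i_1}s_{i_2}\cdots s_{i_{t-1}}(\alpha_{i_t}),
\]
and moreover $\beta_1,\dots,\beta_k$ are pairwise distinct positive roots, so $|N(u^{-1})| = k = \ell(u)$.

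Next I would use the hypothesis $\ell(w)=\ell(y)+\ell(v)$ to build a reduced expression for $w$ by concatenation: choose reduced expressions $y = s_{i_1}\cdots s_{i_p}$ and $v = s_{i_{p+1}}\cdots s_{i_{p+q}}$ with $p=\ell(y)$ and $q=\ell(v)$; then $w = s_{i_1}\cdots s_{i_{p+q}}$ is a word of length $p+q=\ell(w)$, hence reduced. Applying the displayed formula to this word for $w$, the first $p$ roots $\beta_1,\dots,\beta_p$ are literally the roots the same formula assigns to $y$, so $\{\beta_1,\dots,\beta_p\} = N(y^{-1})$. For $1\leq s\leq q$ I would factor the prefix product,
\[
	\beta_{p+s} = (s_{i_1}\cdots s_{i_p})\bigl(s_{i_{p+1}}\cdots s_{i_{p+s-1}}(\alpha_{i_{p+s}})\bigr) = y(\delta_s),
\]
where $\delta_s := s_{i_{p+1}}\cdots s_{i_{p+s-1}}(\alpha_{i_{p+s}})$ runs over exactly the roots the formula assigns to $v$, so $\{\delta_1,\dots,\delta_q\} = N(v^{-1})$ and hence $\{\beta_{p+1},\dots,\beta_{p+q}\} = y\,N(v^{-1})$. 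Therefore $N(w^{-1}) = N(y^{-1}) \cup y\,N(v^{-1})$. Since $y$ acts bijectively on roots, $|y\,N(v^{-1})| = |N(v^{-1})| = q$, so the right-hand side has cardinality at most $p+q = \ell(w) = |N(w^{-1})|$; comparing cardinalities forces the union to be disjoint, which is the first identity. Finally, I would derive the second identity by applying the first to $w^{-1} = v^{-1}y^{-1}$, which is legitimate since $\ell(w^{-1}) = \ell(w) = \ell(y)+\ell(v) = \ell(v^{-1})+\ell(y^{-1})$, so the hypothesis holds with $v^{-1}$ in the role of $y$ and $y^{-1}$ in the role of $v$; the first identity then reads
\[
	N(w) = N\bigl((w^{-1})^{-1}\bigr) = N\bigl((v^{-1})^{-1}\bigr) \sqcup v^{-1}\,N\bigl((y^{-1})^{-1}\bigr) = N(v) \sqcup v^{-1}N(y).
\]

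I do not expect a genuine obstacle here: the argument is essentially bookkeeping, and the only two points needing care are (i) that concatenating reduced words for $y$ and $v$ yields a reduced word for $w$, which is forced by $\ell(w)=\ell(y)+\ell(v)$, and (ii) remembering that the prefix root-sequence attached to a reduced word of $u$ enumerates $N(u^{-1})$ rather than $N(u)$. A reduced-word-free alternative is also available: for $\gamma\in\Phi^+$ one splits on the sign of $v(\gamma)$, treating $v(\gamma)\in\Phi^+$ via $\gamma\in N(w)\iff v(\gamma)\in N(y)\iff \gamma\in v^{-1}N(y)$ and $v(\gamma)\in\Phi^-$ via $\gamma\in N(v)$ together with $N(y)\cap N(v^{-1})=\emptyset$ (which is equivalent to $\ell(yv)=\ell(y)+\ell(v)$); this works but requires that auxiliary length fact, so the reduced-word route seems cleaner and more self-contained.
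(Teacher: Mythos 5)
Your proof is correct: the hypothesis $\ell(w)=\ell(y)+\ell(v)$ guarantees that concatenating reduced words for $y$ and $v$ gives a reduced word for $w$, the prefix root sequence of that word enumerates $N(w^{-1})$ exactly as you state, the count $|N(y^{-1})|+|y\,N(v^{-1})|=\ell(y)+\ell(v)=|N(w^{-1})|$ forces the union to be disjoint, and passing to $w^{-1}=v^{-1}y^{-1}$ legitimately yields the second identity. The paper gives no proof of this lemma, quoting it as a standard fact (cf. \cite{Hum90, BB05}), and your argument is precisely the standard one, so there is nothing further to compare.
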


It's also a well known fact that $W_L$ must normalize $\Phi^+ \setminus \Phi_L^+$.  In particular 
\begin{eqnarray}\label{eq: W_L-norm}
y(\Phi^+ \setminus\Phi^+_{L}) \subseteq \Phi^+ \setminus \Phi^+_L \textup{ and } y(\Phi^- \setminus \Phi_L^- )\subseteq \Phi^- \setminus \Phi_L^-
\end{eqnarray} 
for all $y\in W_L$.

\subsection{Cellular decompositions}

The Bruhat decomposition of $G$ yields a corresponding cellular decomposition of the flag variety.  Namely, $\B=\bigsqcup_{w\in W} C_w$ where each $C_w = B wB /B$ denotes the \emph{Schubert cell} indexed by $w\in W$.  Each Schubert cell has the following explicit description:
\begin{eqnarray}\label{eq: Schubert cell}
C_w = U^w w B/B \textup{ where } U^w = \{ u\in U : w^{-1}uw \in U^- \} \cong \prod_{\gamma\in N(w^{-1})} U_{\gamma}.
\end{eqnarray}
Therefore $C_w \cong \C^{\ell(w)}$ and it is furthermore known that $\overline{C_w} = \bigsqcup_{y \leq w}C_y$ where $\leq$ denotes the Bruhat order on $W$.  We say that the affine cells $C_w$ \emph{pave} $\B$.

Let $S\in \fg$ be a semisimple element.  To begin our analysis of the semisimple Hessenberg variety $\B(S,H)$ consider
\begin{eqnarray*}\label{eqn: affine paving}
\B(S,H) = \bigsqcup_{w\in W} C_w\cap \B(S,H).
\end{eqnarray*}
We refer to the intersections on the right-hand side of the equation above as {\em Hessenberg-Schubert cells}.  The second author proved in \cite[Theorem 5.4]{Pre13} that each Hessenberg-Schubert cell is isomorphic to affine space and computes their dimension in \cite[Corollary 5.8]{Pre13}.    These results are summarized below.

\begin{Prop}\label{prop: cell dim} Suppose $S\in \fh$ is a semisimple element and $M=Z_G(S)$.  Given $w\in W$, write $w=yv$ with $y\in W_M$ and $v\in {^M W}$.  Then $C_w\cap \B(S,H)\cong \C^{d_w}$ where
\[
	d_w = |N(y^{-1})| + | N(v^{-1}) \cap v( \Phi_H^-) |.
\]
\end{Prop}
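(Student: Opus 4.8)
The plan is to realize each Hessenberg-Schubert cell $C_w\cap\B(S,H)$ as the graph of a polynomial map inside the affine chart $C_w\cong\C^{\ell(w)}$, by triangularizing the membership equations with respect to root height. By the reduction recalled in Section~\ref{sec:background}, we may assume $M=Z_G(S)$ is a standard Levi subgroup, so that Lemmas~\ref{lem: w decomp} and \ref{lem: inversion set decomp} and the normalization property \eqref{eq: W_L-norm} apply with $L=M$. Using \eqref{eq: Schubert cell}, fix a total order on $N(w^{-1})$ refining the partial order by height, and write a generic point of $C_w$ as $uwB$ with $u=\prod_{\gamma\in N(w^{-1})}\exp(x_\gamma E_\gamma)\in U^w$; the $x_\gamma$ are coordinates on $\C^{\ell(w)}$. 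Such a point lies in $\B(S,H)$ precisely when $\Ad(w^{-1})\Ad(u^{-1})(S)\in H$, so the first task is to describe $\Ad(u^{-1})(S)$.

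Set $Y=\log u$, which lies in the nilpotent subalgebra $\bigoplus_{\gamma\in N(w^{-1})}\fg_\gamma$; by the Baker--Campbell--Hausdorff formula, the coefficient of $E_\gamma$ in $Y$ is $x_\gamma$ plus a polynomial in the $x_\beta$ with $\hgt(\beta)<\hgt(\gamma)$. Since $\Ad(u^{-1})(S)=e^{-\ad_Y}(S)$, I would use $\ad_S(E_\gamma)=\gamma(S)E_\gamma$ and \eqref{eq: Lie bracket} to establish two structural facts. First, because $\gamma(S)=0$ for $\gamma\in\Phi_M$, the bracket $[Y,S]$ is supported on roots in $\Phi^+\setminus\Phi_M^+$, and $\bigoplus_{\gamma\in\Phi^+\setminus\Phi_M^+}\fg_\gamma$ is stable under $\ad_Y$ (a positive root plus a positive root not lying in $\Phi_M^+$ again lies outside $\Phi_M^+$); together with the evident support inside $\bigoplus_{\gamma\in N(w^{-1})}\fg_\gamma$, this gives $\Ad(u^{-1})(S)=S+\sum_{\gamma\in yN(v^{-1})}p_\gamma(x)\,E_\gamma$, where Lemma~\ref{lem: inversion set decomp} and \eqref{eq: W_L-norm} identify $N(w^{-1})\setminus\Phi_M$ with $yN(v^{-1})$. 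Second, the linear part of $p_\gamma$ is $\gamma(S)\,x_\gamma$ up to sign, with $\gamma(S)\neq0$, while every higher-degree monomial of $p_\gamma$ involves only the $x_\beta$ with $\hgt(\beta)<\hgt(\gamma)$: the $E_\gamma$-component of $\ad_Y^k(S)$ for $k\ge 2$ is a sum of terms indexed by decompositions $\gamma=\gamma_1+\dots+\gamma_k$ with each $\gamma_i\in N(w^{-1})$, forcing $\hgt(\gamma_i)<\hgt(\gamma)$.

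Applying $\Ad(w^{-1})$ then yields $(uw)^{-1}\cdot S=S'+\sum_{\gamma\in yN(v^{-1})}\tilde p_\gamma(x)\,E_{w^{-1}(\gamma)}$, where $\tilde p_\gamma$ is a nonzero scalar multiple of $p_\gamma$, $S'=\Ad(v^{-1})(S)\in\fh\subseteq\fb\subseteq H$ (using $\Ad(y^{-1})(S)=S$ for $y\in W_M$), and $w^{-1}(\gamma)\in\Phi^-$ for each $\gamma\in N(w^{-1})$. Hence $uwB\in\B(S,H)$ if and only if $p_\gamma(x)=0$ for every $\gamma\in yN(v^{-1})$ with $w^{-1}(\gamma)\notin\Phi_H^-$. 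Running through these equations in increasing order of $\hgt(\gamma)$ and dividing by the unit $\gamma(S)$, each solves for $x_\gamma$ as a polynomial in strictly lower-height coordinates, hence inductively as a polynomial in the uneliminated coordinates; so $C_w\cap\B(S,H)$ is the graph of a polynomial map and is an affine space. Its dimension is $\ell(w)$ minus the number of eliminated coordinates, namely those indexed by $\{\gamma\in yN(v^{-1}):w^{-1}(\gamma)\notin\Phi_H^-\}$. Writing $\gamma=y\delta$ with $\delta\in N(v^{-1})$ gives $w^{-1}(\gamma)=v^{-1}(\delta)$, so this set is in bijection with $N(v^{-1})\setminus v(\Phi_H^-)$, of size $|N(v^{-1})|-|N(v^{-1})\cap v(\Phi_H^-)|$. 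Since $|N(w^{-1})|=|N(y^{-1})|+|N(v^{-1})|$ by Lemma~\ref{lem: inversion set decomp}, the dimension equals $|N(y^{-1})|+|N(v^{-1})\cap v(\Phi_H^-)|=d_w$, as claimed.

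The crux is the second paragraph: controlling the root-space support of $\Ad(u^{-1})(S)$ --- so that no membership equation is ever attached to a root $\gamma$ with $\gamma(S)=0$ --- and its height filtration, so that the resulting polynomial system is triangular with invertible pivots. Granting these two facts, the rest is bookkeeping with inversion sets through Lemmas~\ref{lem: w decomp} and \ref{lem: inversion set decomp} and property \eqref{eq: W_L-norm}.
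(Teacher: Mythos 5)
Your argument is correct as far as I can check: the support claim (no component of $\Ad(u^{-1})(S)$ along roots of $\Phi_M$, since $\gamma(S)=0$ there and $\bigoplus_{\gamma\in\Phi^+\setminus\Phi_M^+}\fg_\gamma$ is $\ad$-stable under bracketing with positive root vectors once $M$ is standard), the height-triangular form $p_\gamma=\gamma(S)x_\gamma+r_\gamma(x_\beta:\hgt(\beta)<\hgt(\gamma))$ with $\gamma(S)\neq0$ on $yN(v^{-1})$, and the bookkeeping $w^{-1}(y\delta)=v^{-1}(\delta)$ together with $|N(w^{-1})|=|N(y^{-1})|+|N(v^{-1})|$ do yield that the cell is the graph of a polynomial map of the claimed dimension. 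You should know, however, that the paper itself does not prove this proposition: it is quoted from \cite[Theorem 5.4 and Corollary 5.8]{Pre13}, so there is no internal proof to match. Your route is very much in the spirit of the techniques this paper does use later --- the proof of Proposition~\ref{prop: key step} and Lemma~\ref{lem: technical step} runs the same expansion of $u^{-1}\cdot S$ via $\exp(\ad_{-X})$ with induction on root height --- except that there the authors take $u=\exp(X)$ with $X=\sum_\gamma c_\gamma E_\gamma$ as coordinates on $U^w$, which lets you skip the Baker--Campbell--Hausdorff step entirely; you could simplify your write-up the same way, since $\exp:\fu^w\to U^w$ is already an isomorphism and the product-of-root-groups parametrization buys you nothing here. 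Two small points worth making explicit if you polish this into a full proof: the standing assumption that $M$ is a standard Levi (which you invoke) is what guarantees both that ${}^M W$ and the decomposition $w=yv$ make sense and that $\Phi^+\setminus\Phi_M^+$ is closed under adding positive roots, and the identity $y(S)=S$ for $y\in W_M$ should be justified by noting $W_M$ is generated by reflections $s_\alpha$ with $\alpha(S)=0$.
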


\begin{Rem}\label{rem: irreducible}
Since each Hessenberg-Schubert cell $C_w\cap \B(S,H)$ is isomorphic to affine space, the closure $\overline{C_w \cap \B(S,H)}$ is an irreducible subvariety of $\B(S,H)$.
\end{Rem}

\begin{example} \label{ex: regular ss dimn} If $S\in \fg$ is a regular semisimple element then $M=\{e\}$ so $W_M=\{e\}$ and ${^M W}=W$.  Proposition \ref{prop: cell dim} yields 
\[
	\dim(C_w\cap \B(S, H)) = | N(w^{-1}) \cap w(\Phi_H^-) | = |N^-(w) \cap \Phi_H^-| 
\]
for all $w\in W$, using the fact that $w^{-1}N(w^{-1}) = N^-(w)$. If $H$ is the standard Hessenberg space then $\dim(C_w\cap \B(S, H_{\Delta})) = |N^-(w) \cap \Delta^- | = |N(w)\cap \Delta|$ is the number of right descents of $w$.
\end{example} 

\subsection{Regular semisimple Hessenberg varieties}

In this paper we initiate the study of the singular locus of $\B(S,H)$.  When $S\in \fh$ is a regular semisimple element then $\B(S,H)$ is  a smooth variety \cite[Theorem 6]{DMPS92}.  

\begin{Prop}[DeMari-Procesi-Shayman] \label{prop: regular} Suppose $S \in \fh$ is a regular semisimple element and $H$ is a Hessenberg space in $\fg$ with respect to $\fb$.  Then $\B(S, H)$ is a smooth variety and $\dim(\B(S, H)) = |\Phi_H^-|$.  
\end{Prop}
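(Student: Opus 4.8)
The plan is to prove the two assertions — smoothness and the dimension formula — by leveraging the cellular decomposition recalled above together with a tangent-space computation at the $T$-fixed points. First I would observe that $\B(S,H)$ is $T$-stable, since $S\in\fh$ implies $\mathrm{Ad}(t^{-1})(S)=S\in H$ for all $t\in T$, so the torus acts on $\B(S,H)$ and its fixed points are a subset of $\{wB:w\in W\}$. In fact, because $S$ is regular, $\mathrm{Ad}(w^{-1})(S)$ is a regular semisimple element of $\fh\subseteq H$ for every $w$, so \emph{all} of the $|W|$ coordinate flags lie in $\B(S,H)$; each Schubert cell $C_w$ meets $\B(S,H)$ in a nonempty set, which by Proposition~\ref{prop: cell dim} (with $M=\{e\}$, see Example~\ref{ex: regular ss dimn}) is isomorphic to $\C^{d_w}$ with $d_w=|N^-(w)\cap\Phi_H^-|$.

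The key dimension count is then to show $\max_w d_w = |\Phi_H^-|$ and that the cell of this maximal dimension is open dense. Taking $w=w_0$, the longest element, we get $N^-(w_0)=\Phi^-$, so $d_{w_0}=|\Phi_H^-|$; and since $d_w\le|\Phi_H^-|$ for all $w$, the Hessenberg-Schubert cell $C_{w_0}\cap\B(S,H)$ has the largest possible dimension among all cells. To conclude equidimensionality and smoothness simultaneously I would compute the Zariski tangent space $T_{wB}\B(S,H)$ at each fixed point. Identifying $T_{wB}\B\cong\fg/\mathrm{Ad}(w)(\fb)$, the tangent space to the Hessenberg variety is the image of $\{Y\in\fg:[Y,\mathrm{Ad}(w^{-1})(S)]\in H\}$ — equivalently, decomposing into root spaces relative to $\fh$ and using that $\mathrm{Ad}(w^{-1})(S)$ has distinct eigenvalues on distinct root spaces (regularity), one finds the tangent space is spanned by the root vectors $E_\gamma$ with $w^{-1}(\gamma)\in\Phi^-$ for which the bracket condition lands in $H$, and a short check shows this dimension equals exactly $|\Phi_H^-|$ at every $w\in W$. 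Since the tangent-space dimension is constant and equal to $|\Phi_H^-|=\dim(C_{w_0}\cap\B(S,H))$, the variety is smooth of pure dimension $|\Phi_H^-|$.

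The main obstacle, I expect, is the tangent-space calculation: one must carefully track which root spaces $\fg_\gamma$ contribute to $\{Y:[Y,\mathrm{Ad}(w^{-1})S]\in H\}$ modulo $\mathrm{Ad}(w)(\fb)$, and verify the count is $w$-independent. The clean way to organize this is to note that $[\,\cdot\,,\mathrm{Ad}(w^{-1})S]$ acts on $\fg_\gamma$ by the nonzero scalar $\gamma(\mathrm{Ad}(w^{-1})S)=(w^{-1}\gamma)(S)$ whenever $w^{-1}\gamma\notin\Phi_M=\{0\}$-roots, i.e. always (regularity), so $[\fg_\gamma,\mathrm{Ad}(w^{-1})S]=\fg_\gamma$ for every root $\gamma$; hence the condition $[Y,\mathrm{Ad}(w^{-1})S]\in H$ for $Y\in\fg_\gamma$ with $\gamma$ not contributing to $\mathrm{Ad}(w)(\fb)$ becomes simply $\fg_\gamma\subseteq H$, which happens for precisely $|\Phi^-\setminus(\Phi^-\setminus w^{-1}\Phi_H^-)\cap\cdots|$ — after bookkeeping, exactly $|\Phi_H^-|$ values of $\gamma$. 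Once this is pinned down the proposition follows; alternatively, one may cite that a connected variety paved by affine cells in which the maximal cell has dimension equal to the constant tangent-space dimension is smooth and irreducible, but since the paving here is not obviously connected a priori, I would prefer to run the direct tangent-space argument at every fixed point and invoke that smooth points form an open set meeting every cell.
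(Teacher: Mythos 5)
The paper itself does not prove this proposition; it is quoted directly from \cite[Theorem 6]{DMPS92}, so your proposal has to be measured against the standard argument rather than against a proof in the text. The parts of your proposal that are sound: every coordinate flag $wB$ lies in $\B(S,H)$, the cell dimensions $d_w=|N^-(w)\cap\Phi_H^-|$ give $\dim \B(S,H)=\max_w d_w=d_{w_0}=|\Phi_H^-|$, and the linearization of the defining condition at $wB$ (namely $[\,\cdot\,,\Ad(w^{-1})S]\in H$ on $\fg/\Ad(w)\fb$, where regularity gives $\gamma(\Ad(w^{-1})S)\neq 0$ for every root $\gamma$) has kernel of dimension exactly $|\Phi_H^-|$ at every $w$, which bounds the Zariski tangent space $T_{wB}\B(S,H)$ above by $|\Phi_H^-|$. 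The genuine gap is the inference ``tangent-space dimension is constant and equals the dimension of the big cell, hence the variety is smooth of pure dimension $|\Phi_H^-|$.'' Smoothness at $wB$ requires the \emph{local} dimension of $\B(S,H)$ at $wB$ to equal the tangent-space dimension, and nothing in your argument bounds the local dimension from below: a priori an irreducible component could be the closure of a low-dimensional Hessenberg--Schubert cell, in which case the fixed points on it have local dimension strictly less than $|\Phi_H^-|$ and your computation says nothing about smoothness there. This is not a hypothetical worry — for non-regular $S$ exactly such small components occur (Section \ref{sec: irreducible components}), and regularity entered your argument only through $\gamma(S)\neq 0$ in the linearization, which does not by itself exclude them. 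The missing ingredient is the standard codimension bound: $\B(S,H)$ is the zero locus of the section of the rank-$\dim(\fg/H)$ bundle $G\times_B(\fg/H)$ induced by $S$ (equivalently, each patch is cut out of an affine space of dimension $|\Phi^-|$ by $|\Phi^-|-|\Phi_H^-|$ equations, as in Section \ref{sec: examples}), so by Krull's principal ideal theorem every irreducible component has dimension at least $|\Phi_H^-|$. With that in hand, at each fixed point one gets $|\Phi_H^-|\leq \dim_{wB}\B(S,H)\leq \dim T_{wB}\B(S,H)\leq |\Phi_H^-|$, forcing equality and smoothness at $wB$.

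A second, smaller gap is the passage from the fixed points to all points: ``smooth points form an open set meeting every cell'' does not imply smoothness at every point of those cells. The correct mechanism is that the singular locus is a closed $T$-stable subset of the projective variety $\B(S,H)$, hence contains a $T$-fixed point if it is nonempty; this is precisely \cite[Lemma 4]{DMPS92}, which the paper invokes in Remark \ref{rem: simplifications}, and it also yields equidimensionality since every component is closed and $T$-stable, hence contains a fixed point, which is a smooth point of local dimension $|\Phi_H^-|$. Finally, a minor but telling slip: your claim that $C_{w_0}\cap\B(S,H)$ is open and dense is false for general $H$ (for $H=\fb$ the variety is the finite set of coordinate flags; density of the big cell requires $\Delta^-\subseteq\Phi_H^-$, as in Corollary \ref{cor: reg-irreducible}, which is itself deduced from this proposition). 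You do not rely on density in the end, but it reflects the same absent lower-bound-on-dimension argument.
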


In the same paper, DeMari-Procesi-Shayman also prove that the regular semisimple Hessenberg variety corresponding to the standard Hessenberg space is the toric variety associated with the Weyl chambers of the root system \cite[Theorem 11]{DMPS92}.  In general, one obtains the following corollary to the above proposition from \cite[Proposition A1]{AT10}.

\begin{Cor} \label{cor: reg-irreducible} Suppose $S \in \fh$ is a regular semisimple element and $H$ is a Hessenberg space in $\fg$ with respect to $\fb$.  If $\Delta^- \subseteq \Phi_H^-$ then $\B(S, H)$ is an irreducible variety such that
\[
	\B(S, H) = \overline{C_{w_0} \cap \B(S,H)}
\]
where $w_0\in W$ denotes the longest element of the Weyl group.
\end{Cor}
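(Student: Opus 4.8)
The plan is to deduce the corollary from Proposition~\ref{prop: regular} together with the cellular description of $\B(S,H)$ recorded above. First I would observe that since $S$ is regular semisimple, $M = Z_G(S) = T$, so $W_M = \{e\}$ and every $w \in W$ lies in ${^M W}$; hence by Example~\ref{ex: regular ss dimn} the Hessenberg–Schubert cell $C_w \cap \B(S,H)$ has dimension $|N^-(w) \cap \Phi_H^-|$. The key point is that when $\Delta^- \subseteq \Phi_H^-$, the cell indexed by the longest element $w_0$ has dimension $|N^-(w_0) \cap \Phi_H^-| = |\Phi^- \cap \Phi_H^-| = |\Phi_H^-|$, because $w_0$ sends every positive root to a negative root so $N^-(w_0) = \Phi^-$. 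By Proposition~\ref{prop: regular} this equals $\dim \B(S,H)$, so $\overline{C_{w_0} \cap \B(S,H)}$ is a closed irreducible subvariety of $\B(S,H)$ of full dimension.

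Next I would invoke smoothness: since $\B(S,H)$ is smooth by Proposition~\ref{prop: regular}, it is a disjoint union of its irreducible components, each of which is a connected component and hence both open and closed. A closed irreducible subvariety of full dimension must therefore be one of these components, and to conclude $\B(S,H) = \overline{C_{w_0} \cap \B(S,H)}$ I need to rule out the existence of any other component. This is exactly where the cited input \cite[Proposition A1]{AT10} enters: that result (in the toric/GKM setting, or more generally) guarantees irreducibility, or equivalently that the affine paving has a unique cell of top dimension whose closure is everything. Concretely, one shows that every other Hessenberg–Schubert cell $C_w \cap \B(S,H)$ lies in the closure $\overline{C_{w_0}\cap \B(S,H)}$; since the cells partition $\B(S,H)$ and $\overline{C_{w_0}\cap\B(S,H)}$ is closed, containing all cells forces equality.

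The step I expect to require the most care is precisely the containment of all cells in $\overline{C_{w_0} \cap \B(S,H)}$, i.e. the genuine irreducibility statement rather than just equidimensionality — a smooth variety can certainly be equidimensional without being irreducible, so full dimension of one component is not by itself enough. The hypothesis $\Delta^- \subseteq \Phi_H^-$ is what makes this work: it is the combinatorial condition under which \cite[Proposition A1]{AT10} applies, ensuring the paving is ``connected'' in the appropriate sense. I would therefore structure the proof as: (1) compute $\dim(C_{w_0}\cap\B(S,H)) = |\Phi_H^-| = \dim\B(S,H)$ using $N^-(w_0) = \Phi^-$; (2) cite \cite[Proposition A1]{AT10} under the hypothesis $\Delta^- \subseteq \Phi_H^-$ to conclude $\B(S,H)$ is irreducible; (3) note that an irreducible variety equals the closure of any of its dense open subsets, and $C_{w_0}\cap\B(S,H)$, being the unique top-dimensional cell in the affine paving, is dense, giving $\B(S,H) = \overline{C_{w_0}\cap\B(S,H)}$.
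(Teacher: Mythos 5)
Your proposal is correct and follows essentially the route the paper intends: the paper gives no separate argument but derives the corollary from Proposition~\ref{prop: regular} together with \cite[Proposition A1]{AT10}, exactly as you do, with the dimension count $\dim(C_{w_0}\cap\B(S,H)) = |N^-(w_0)\cap\Phi_H^-| = |\Phi_H^-| = \dim\B(S,H)$ and density of the top cell. The only small clarification is that \cite[Proposition A1]{AT10} is a connectedness statement (valid precisely when $\Delta^-\subseteq\Phi_H^-$), and irreducibility then follows by combining it with smoothness from Proposition~\ref{prop: regular}, which is what your second paragraph already supplies.
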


Below we consider the case in which $S\in \fh$ is not necessarily regular.  When the Hessenberg space is fixed as the standard one and there is no possible confusion, we write $\B(S,H_{\Delta}) = \B(S)$.


\section{Irreducible components} \label{sec: irreducible components}

Throughout this section and the next we assume that $H=H_{\Delta}$ is the standard Hessenberg space.  Let $S\in \fh$ denote a non-regular semisimple element.  In this section we identify the irreducible components of $\B(S)$.  Each one is the $M$-orbit of the closure of a certain Hessenberg-Schubert cell. We begin by associating each $v\in {W}$ to a subset of simple roots, $\R(v)$.  These subsets will be used later to identify which Hessenberg-Schubert cells correspond to irreducible components of $\B(S)$.  

\begin{Def}  For each $v\in W$, set $\R(v) :=  N(v) \cap \Delta$.  In other words, $\R(v)$ is the set of simple roots such that the simple reflections $s_{\alpha}$ for $\alpha\in \R(v)$ are right descents of $v$.  
\end{Def}

By Proposition \ref{prop: cell dim} when $H$ is the standard Hessenberg space and $w=yv$ with $y\in W_M$ and $v\in {^M W}$, then 
\begin{equation}\label{eqn:dim}
\dim(C_w\cap \B(S)) = |N(y^{-1})| + |N(v^{-1})\cap v(\Delta^-)| = |N(y^{-1})| + |\R(v)|. 
\end{equation}
In particular, if $v\in {^M W}$ then $\dim(C_v\cap \B(S))=|\R(v)|$.  

We now establish some notation for use in this section and the next.  For each $v\in {^M W}$, let $L\subseteq G$ denote the Levi subgroup corresponding to $\R(v)$ (in other words, $\Delta_L=R(v)$) with Lie algebra $\fl \subseteq \fg$ and associated flag variety $\B_L = L/B_L$. If there is any ambiguity, we write $L_v$ to indicate that $L_v$ is the Levi subgroup associated to $\R(v)$ for $v\in {^M W}$.

\begin{Rem} \label{rem: L-decomp} 
Let $w_v\in W_L$ denote the longest element of $W_L$.  Since $R(w_v) = R(v) \subseteq N(v)$, it follows from Lemma~\ref{lem: w decomp} that $v$ can be written uniquely as $v=x_vw_v$ for some $x_v\in W^{L}$ and $\ell(v) = \ell(x_v) + \ell(w_v)$.  
\end{Rem} 

\begin{example} \label{ex: 2-2}  Let $\fg=\mathfrak{gl}_4(\C)$ and $S = \text{diag}(1,1, -1, -1)$.  In this case, $M=GL_2(\C) \times GL_2(\C)$ and $W_M=\left< s_1, s_3 \right>$ is a Young subgroup of $S_n$.  The table below displays each element of ${^M W}$, the subset of simple roots $\R(v)$, and corresponding decomposition $v=x_vw_v$.
\begin{center}
$\begin{array}{c|c|c|c}
v\in {^M W} & \R(v) &  x_v & w_v \\ \hline
s_2s_3s_1s_2  & \{\alpha_2\} & {s_2s_3s_1} & {s_2}\\
s_2s_3s_1 & \{ \alpha_1,  \alpha_3 \} & {s_2} & {s_3s_1} \\
s_2s_1  &\{ \alpha_1\} & {s_2} & { s_1}\\
s_2s_3 &\{\alpha_3\} & {s_2} & {s_3}\\
s_2 &\{\alpha_2\} & e & {s_2} \\
e & \emptyset & e & e
\end{array}$ 
\end{center}
We will see below that this information can be used to characterize the closure relations among the Hessenberg-Schubert cells $C_v\cap \B(S)$ for $v\in {^M W}$.
\end{example}

Suppose $v\in {^M W}$ and $v=x_vw_v$ is the decomposition of $v$ given in Remark~\ref{rem: L-decomp}.  The next two lemmas establish some basic properties of this decomposition.  Since $w_v$ is the longest element of $W_L$, $N(w_v^{-1}) = \Phi_L^+$ and by Lemma~\ref{lem: inversion set decomp}, $N(v^{-1}) = N(x_v^{-1}) \sqcup x_v \Phi_L^+$.  

\begin{Lem}\label{lem: v-action} 
If $\gamma\in N(x_v^{-1})$, then $v^{-1}(\gamma) \in \Phi^- \setminus \Delta^-$.
\end{Lem}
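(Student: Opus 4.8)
The goal is to show that if $\gamma \in N(x_v^{-1})$ then $v^{-1}(\gamma) \in \Phi^- \setminus \Delta^-$. The plan is to use the decomposition $v = x_v w_v$ together with the length-additivity $\ell(v) = \ell(x_v) + \ell(w_v)$ from Remark~\ref{rem: L-decomp}, and the inversion-set decomposition from Lemma~\ref{lem: inversion set decomp}. First I would observe that since $\gamma \in N(x_v^{-1})$, Lemma~\ref{lem: inversion set decomp} applied to $v^{-1} = w_v^{-1} x_v^{-1}$ (with length-additivity) shows $\gamma \in N(v^{-1})$, hence $v^{-1}(\gamma) \in \Phi^-$ automatically. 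So the real content is showing $v^{-1}(\gamma) \notin \Delta^-$, i.e.\ $-v^{-1}(\gamma)$ is \emph{not} a simple root.

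The key step is to track $\gamma$ through the two factors. Write $v^{-1}(\gamma) = w_v^{-1}(x_v^{-1}(\gamma))$. Since $\gamma \in N(x_v^{-1})$, we have $x_v^{-1}(\gamma) \in \Phi^-$; moreover $x_v \in W^L$ means $N(x_v) \subseteq \Phi^+ \setminus \Phi_L^+$, equivalently $N(x_v^{-1}) = x_v N(x_v^{-1})$... more precisely I would use that $x_v^{-1}(\gamma) \in \Phi^- \setminus \Phi_L^-$: indeed $x_v^{-1} \in {}^L W$... wait, the correct reading is that $x_v \in W^L$ implies $x_v^{-1} \in {}^LW$, so $N(x_v) \subseteq \Phi^+\setminus\Phi_L^+$ and by applying $x_v^{-1}$ one gets that $x_v^{-1}$ sends $N(x_v^{-1})\subseteq\Phi^+$ into $\Phi^-$, and I need that the image avoids $\Phi_L^-$. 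This follows because $x_v \in W^L$ normalizes $\Phi^-\setminus\Phi_L^-$ in the appropriate sense (cf.\ the $W_L$-normalization statement \eqref{eq: W_L-norm} applied on the other side), so $x_v^{-1}(\gamma) \in \Phi^- \setminus \Phi_L^-$. Then applying $w_v$, which lies in $W_L$: by \eqref{eq: W_L-norm}, $w_v^{-1}$ preserves $\Phi^- \setminus \Phi_L^-$, so $v^{-1}(\gamma) = w_v^{-1}(x_v^{-1}(\gamma)) \in \Phi^- \setminus \Phi_L^-$. Finally, I claim $\Delta^- \subseteq \Phi_L^-$ is false in general, so this alone doesn't finish it --- but the simple negative roots outside $\Phi_L^-$ are exactly those $-\alpha$ with $\alpha \in \Delta \setminus \R(v)$, and such $\alpha \notin N(v)$, so $v(-\alpha) \in \Phi^-$... hmm, I want to rule out $v^{-1}(\gamma) = -\alpha$.

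To close this last gap I would argue directly: if $v^{-1}(\gamma) = -\alpha$ for some $\alpha \in \Delta$, then since we showed $v^{-1}(\gamma) \in \Phi^-\setminus\Phi_L^-$ we'd need $\alpha \in \Delta \setminus \R(v)$; but $\gamma = v(-\alpha) = -v(\alpha)$ forces $v(\alpha) \in \Phi^-$, i.e.\ $\alpha \in N(v) \cap \Delta = \R(v)$, a contradiction. Thus $v^{-1}(\gamma) \in \Phi^- \setminus \Delta^-$, as desired. The main obstacle I anticipate is getting the normalization direction exactly right --- specifically verifying that $x_v^{-1}(\gamma) \in \Phi^- \setminus \Phi_L^-$ rather than merely $\Phi^-$ --- since this requires carefully pairing the definition of $W^L$ (resp.\ ${}^LW$) with the inversion-set decomposition $N(v^{-1}) = N(x_v^{-1}) \sqcup x_v\Phi_L^+$ already recorded in the paragraph preceding the lemma; that decomposition in fact tells us $\gamma \in N(x_v^{-1})$ is disjoint from $x_v \Phi_L^+$, which is precisely the input needed to conclude $x_v^{-1}(\gamma) \notin \Phi_L^-$. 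Everything else is bookkeeping with $W_L$-invariance of $\Phi^\pm \setminus \Phi_L^\pm$.
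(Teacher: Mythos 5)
Your plan is, in substance, the paper's own proof: both arguments first deduce $v^{-1}(\gamma)\in\Phi^-\setminus\Phi_L^-$ by pushing $\gamma$ through $x_v^{-1}$ (using $x_v\in W^L$) and then through $w_v^{-1}$ (using the $W_L$-stability of $\Phi^-\setminus\Phi_L^-$ from \eqref{eq: W_L-norm}), and both conclude by noting that $v^{-1}(\gamma)=-\alpha\in\Delta^-$ would force $\alpha\in N(v)\cap\Delta=R(v)\subseteq\Delta_{L_v}$, contradicting $v^{-1}(\gamma)\notin\Phi_L^-$. One micro-justification in your write-up is off, though it does not break the argument: the disjointness of $N(x_v^{-1})$ from $x_v\Phi_L^+$ only excludes $x_v^{-1}(\gamma)\in\Phi_L^+$, which is vacuous because you already know $x_v^{-1}(\gamma)\in\Phi^-$; what you need is $x_v^{-1}(\gamma)\notin\Phi_L^-$, and the clean deduction (the one the paper makes) is $x_v^{-1}N(x_v^{-1})=N^-(x_v)=-N(x_v)\subseteq-(\Phi^+\setminus\Phi_L^+)=\Phi^-\setminus\Phi_L^-$, a direct consequence of $x_v\in W^L$ --- equivalently, $x_v(\Phi_L^-)\subseteq\Phi^-$ while $\gamma\in\Phi^+$. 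With that correction your proposal coincides with the paper's proof.
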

\begin{proof}  Since $\gamma\in N(x_v^{-1}) \subseteq N(v^{-1})$ we certainly have that $v^{-1}(\gamma)\in \Phi^-$. Using Equation~\eqref{eq: W_L-norm} and the fact that $x_v\in W^L$ we get 
\[
	v^{-1}(\gamma) \in w_v^{-1}  x_v^{-1}N(x_v^{-1}) = w_v^{-1}N^-(x_v) \subseteq w_v^{-1}(\Phi^--\Phi_L^-) \subseteq \Phi^--\Phi_L^-.
\]
If $v^{-1}(\gamma)\in \Delta^-$, then there exists $\alpha\in \Delta$ such that $v^{-1}(\gamma)=-\alpha$ implying that $v^{-1}(\gamma) = -\alpha\in -R(v)\subseteq \Phi_L^-$, and contradicting the previous sentence.  Therefore $v^{-1}(\gamma)\in \Phi^- \setminus \Delta^-$. 
\end{proof}

\begin{Lem}\label{lem: cosets} 
$\tau = x_v z $ is an element of ${^M W}$ for all $z\in W_L$.
\end{Lem}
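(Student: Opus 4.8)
The plan is to show that $\tau = x_v z$ lies in ${^M W}$ by verifying the defining condition $N(\tau^{-1}) \subseteq \Phi^+ \setminus \Phi_M^+$. First I would observe that since $z \in W_L = W_{\R(v)}$ and $\R(v) \subseteq \Delta$, we have $\ell(x_v z) \leq \ell(x_v) + \ell(z)$, but more importantly I want a clean description of $N(\tau^{-1})$. The key structural input is that $x_v \in W^L$ means $N(x_v) \subseteq \Phi^+ \setminus \Phi_L^+$, equivalently $x_v^{-1}$ sends $\Phi_L^+$ into $\Phi^+$; dually $x_v(\Phi_L^+) \subseteq \Phi^+$. So I would compute $N(\tau^{-1}) = N(z^{-1} x_v^{-1})$ and split it: for any $\gamma \in \Phi^+$, $\tau^{-1}(\gamma) = z^{-1}(x_v^{-1}(\gamma)) \in \Phi^-$ requires understanding where $x_v^{-1}(\gamma)$ lands. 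Since $z^{-1} \in W_L$ preserves $\Phi^+ \setminus \Phi_L^+$ (Equation~\eqref{eq: W_L-norm}), if $x_v^{-1}(\gamma) \in \Phi^+ \setminus \Phi_L^+$ then $\tau^{-1}(\gamma) \in \Phi^+ \setminus \Phi_L^+ \subseteq \Phi^+$, so such $\gamma \notin N(\tau^{-1})$. Similarly if $x_v^{-1}(\gamma) \in \Phi^- \setminus \Phi_L^-$ then $\tau^{-1}(\gamma) \in \Phi^-$ and this is a genuine inversion. The remaining case is $x_v^{-1}(\gamma) \in \Phi_L$, and since $x_v \in W^L$ forces $\gamma \in \Phi_L^+ \cap x_v(\Phi_L)$... actually $x_v(\Phi_L^-) \cap \Phi^+ = \emptyset$ is not automatic, so I need $x_v(\Phi_L) = \Phi_L$? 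That's false in general. Let me reconsider: the cleaner route is $N(\tau^{-1}) = N((x_v z)^{-1})$ and use that $x_v \in W^L \Leftrightarrow x_v^{-1} \in {^L W}$, so $N(x_v^{-1}) \subseteq \Phi^+ \setminus \Phi_L^+$, combined with the fact that for $\gamma \in \Phi_L^+$, $x_v^{-1}(\gamma) \in \Phi^+$.

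Here is the approach I would actually carry out. I would show $N(\tau^{-1}) \subseteq N(x_v^{-1}) \cup x_v(\Phi_L^+)$, or more precisely identify $N(\tau^{-1})$ explicitly. Write $\tau^{-1} = z^{-1} x_v^{-1}$ with $z^{-1} \in W_L$. Using the standard identity $N((z^{-1} x_v^{-1})) = N(x_v^{-1}) \triangle x_v N(z^{-1})$ is awkward with symmetric difference; instead, since I want containment, I note that for $\gamma \in N(\tau^{-1})$, either $\gamma \in N(x_v^{-1})$ (in which case $\gamma \in \Phi^+ \setminus \Phi_L^+ \subseteq \Phi^+ \setminus \Phi_M^+$ because $\R(v) = \Delta_L \subseteq \Delta$ and... wait, I need $\Phi_L^+ \supseteq \Phi_M^+$? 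No — $M$ is the centralizer of $S$, $L$ is associated to $\R(v)$, these are unrelated a priori). The key point I am missing is that $v \in {^M W}$ already, so $N(v^{-1}) \subseteq \Phi^+ \setminus \Phi_M^+$, and $N(v^{-1}) = N(x_v^{-1}) \sqcup x_v \Phi_L^+$ from Remark~\ref{rem: L-decomp} and Lemma~\ref{lem: inversion set decomp}. Hence $N(x_v^{-1}) \subseteq \Phi^+ \setminus \Phi_M^+$ and $x_v(\Phi_L^+) \subseteq \Phi^+ \setminus \Phi_M^+$. Now for general $z \in W_L$, I claim $N(\tau^{-1}) \subseteq N(x_v^{-1}) \cup x_v(\Phi_L^+)$. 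Indeed take $\gamma \in \Phi^+$ with $\tau^{-1}(\gamma) \in \Phi^-$. If $x_v^{-1}(\gamma) \in \Phi^-$ then $\gamma \in N(x_v^{-1})$, done. If $x_v^{-1}(\gamma) \in \Phi^+$: then since $z^{-1}$ maps it to $\Phi^-$, and $z^{-1} \in W_L$ only inverts roots in $\Phi_L^+$, we must have $x_v^{-1}(\gamma) \in \Phi_L^+$, i.e. $\gamma \in x_v(\Phi_L^+)$. So indeed $N(\tau^{-1}) \subseteq N(x_v^{-1}) \cup x_v(\Phi_L^+) \subseteq \Phi^+ \setminus \Phi_M^+$, which is exactly the condition $\tau \in {^M W}$.

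The main obstacle — and the step I would double-check most carefully — is the dichotomy "$z^{-1} \in W_L$ inverts only positive roots of $\Phi_L$." This is the statement that $N((z^{-1})) \subseteq \Phi_L^+$ for $z^{-1} \in W_L$, which follows because $W_L$ acts on the root subsystem $\Phi_L$ and permutes $\Phi^+ \setminus \Phi_L^+$ among itself (Equation~\eqref{eq: W_L-norm}); so if $\beta \in \Phi^+$ and $z^{-1}(\beta) \in \Phi^-$ then $\beta \notin \Phi^+ \setminus \Phi_L^+$, forcing $\beta \in \Phi_L^+$. Applying this with $\beta = x_v^{-1}(\gamma)$ gives the needed case analysis. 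The only subtlety is making sure the decomposition $N(v^{-1}) = N(x_v^{-1}) \sqcup x_v \Phi_L^+$ is correctly invoked: $v = x_v w_v$ with $\ell(v) = \ell(x_v) + \ell(w_v)$ and $N(w_v^{-1}) = \Phi_L^+$ since $w_v$ is the longest element of $W_L$, so Lemma~\ref{lem: inversion set decomp} applies directly. Everything else is routine root-system bookkeeping, so I expect the write-up to be short.

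\begin{proof}
Write $\tau^{-1} = z^{-1} x_v^{-1}$ with $z^{-1} \in W_L$. We first claim that
\[
	N(\tau^{-1}) \subseteq N(x_v^{-1}) \cup x_v(\Phi_L^+).
\]
Let $\gamma \in \Phi^+$ with $\tau^{-1}(\gamma) = z^{-1}(x_v^{-1}(\gamma)) \in \Phi^-$. If $x_v^{-1}(\gamma) \in \Phi^-$, then $\gamma \in N(x_v^{-1})$. Otherwise $x_v^{-1}(\gamma) \in \Phi^+$; since $z^{-1} \in W_L$, Equation~\eqref{eq: W_L-norm} shows $z^{-1}$ maps $\Phi^+ \setminus \Phi_L^+$ into itself, so $z^{-1}(x_v^{-1}(\gamma)) \in \Phi^-$ forces $x_v^{-1}(\gamma) \in \Phi_L^+$, i.e. $\gamma \in x_v(\Phi_L^+)$. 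This proves the claim.

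Now recall from Remark~\ref{rem: L-decomp} that $v = x_v w_v$ with $\ell(v) = \ell(x_v) + \ell(w_v)$, and $N(w_v^{-1}) = \Phi_L^+$ since $w_v$ is the longest element of $W_L$. By Lemma~\ref{lem: inversion set decomp},
\[
	N(v^{-1}) = N(x_v^{-1}) \sqcup x_v N(w_v^{-1}) = N(x_v^{-1}) \sqcup x_v(\Phi_L^+).
\]
Since $v \in {^M W}$ we have $N(v^{-1}) \subseteq \Phi^+ \setminus \Phi_M^+$, so both $N(x_v^{-1})$ and $x_v(\Phi_L^+)$ are contained in $\Phi^+ \setminus \Phi_M^+$. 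Combining this with the claim above gives
\[
	N(\tau^{-1}) \subseteq N(x_v^{-1}) \cup x_v(\Phi_L^+) \subseteq \Phi^+ \setminus \Phi_M^+,
\]
which is precisely the condition that $\tau \in {^M W}$.
\end{proof}
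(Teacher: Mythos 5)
Your proof is correct. The route is close to the paper's but not identical in the key step: the paper first notes that $x_v\in W^L$ gives $\ell(x_vz)=\ell(x_v)+\ell(z)$ (Lemma~\ref{lem: w decomp}), which licenses applying Lemma~\ref{lem: inversion set decomp} to the product $\tau=x_vz$ and yields the exact equality $N(\tau^{-1})=N(x_v^{-1})\sqcup x_vN(z^{-1})$, after which $N(z^{-1})\subseteq\Phi_L^+$ gives $N(\tau^{-1})\subseteq N(x_v^{-1})\sqcup x_v\Phi_L^+=N(v^{-1})\subseteq\Phi^+\setminus\Phi_M^+$. You instead bypass the length-additivity step entirely and prove only the containment $N(\tau^{-1})\subseteq N(x_v^{-1})\cup x_v(\Phi_L^+)$ by a direct case analysis on where $x_v^{-1}(\gamma)$ lands, using Equation~\eqref{eq: W_L-norm} (elements of $W_L$ can only invert roots of $\Phi_L$); the remainder of your argument — identifying $N(v^{-1})=N(x_v^{-1})\sqcup x_v\Phi_L^+$ via Remark~\ref{rem: L-decomp} and Lemma~\ref{lem: inversion set decomp}, then quoting $v\in{^M W}$ — coincides with the paper's. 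What the paper's version buys is the sharper structural statement (a disjoint-union decomposition of $N(\tau^{-1})$, which in particular re-proves $\ell(\tau)=\ell(x_v)+\ell(z)$); what yours buys is a slightly more elementary argument that needs only the containment, which is all the lemma requires. Both are valid, and your flagged worry about the dichotomy ``$z^{-1}\in W_L$ inverts only roots of $\Phi_L^+$'' is resolved exactly as you say, by Equation~\eqref{eq: W_L-norm}.
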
  
\begin{proof}  This is a direct implication of Lemmas~\ref{lem: w decomp} and \ref{lem: inversion set decomp}.  Since $x_v \in W^{L}$ it follows that $\ell(x_v z) = \ell(x_v)+\ell(z)$ for all $z\in W_L$ by Lemma~\ref{lem: w decomp}.   As $w_v$ is the longest element of $W_L$ we know $N(z^{-1})\subseteq \Phi_L^+ = N(w_v^{-1})$.  Now by Lemma~\ref{lem: inversion set decomp},
\begin{eqnarray*}\label{eqn: inv set}
N(\tau^{-1}) = N(x_v^{-1}) \sqcup x_vN(z^{-1}) \subseteq N(x_v^{-1}) \sqcup x_v\Phi_L^+ = N(v^{-1}) \subseteq \Phi^+ - \Phi_M^+
\end{eqnarray*}
since $v\in {^M W}$.
\end{proof}

We define $\fu^v$ to be the Lie algebra of the unipotent subgroup $U^v$ defined in Equation~\eqref{eq: Schubert cell}, so
\[
	\fu^v := \bigoplus_{\gamma\in N(v^{-1})} \fg_{\gamma} = \bigoplus_{\gamma\in N(x_v^{-1})} \fg_{\gamma} \oplus \bigoplus_{\gamma\in x_v\Phi_L^+} \fg_{\gamma}.
\]
Our next result is a technical lemma which will be used to prove the proposition following it. 

\begin{Lem}\label{lem: technical step}
Let $X\in \fu^v$ and write $X = \sum_{\gamma\in N(v^{-1})} c_{\gamma}E_{\gamma}$ for some $c_{\gamma}\in \C$.  Suppose $\beta \in N(x_v^{-1})$ such that $\hgt(\beta) \geq k$ or $\beta\in x_v\Phi_L^+$.  If $c_{\gamma}=0$ for all $\gamma\in N(x_v^{-1})$ such that $\hgt(\gamma) < k$, then 
\[
	\ad_{-X}^{i}(E_{\beta}) \in \bigoplus_{\substack{\delta\in N(x_v^{-1})\\ \hgt(\delta)>k}} \fg_{\delta} \oplus \bigoplus_{\delta\in x_v\Phi_L^+} \fg_{\delta}
\]
for all $i\geq 1$.
\end{Lem}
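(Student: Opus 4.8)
The plan is to prove the containment by induction on $i$, with the case $i=1$ as the base step and the inductive step reducing $\ad_{-X}^{i+1}(E_\beta) = \ad_{-X}(\ad_{-X}^i(E_\beta))$ to an application of the case $i=1$ (applied to each root space appearing in $\ad_{-X}^i(E_\beta)$) together with the inductive hypothesis. Since $\ad_{-X} = -\ad_X$ and $X = \sum_{\gamma\in N(v^{-1})} c_\gamma E_\gamma$, linearity of $\ad$ in the first slot reduces everything to understanding, for each relevant $\gamma$ with $c_\gamma\neq 0$ and each root $\delta$ with $\fg_\delta$ appearing in the target space, the bracket $\ad_{E_\gamma}(E_\delta)$, which by Equation~\eqref{eq: Lie bracket} is a nonzero multiple of $E_{\gamma+\delta}$ when $\gamma+\delta\in\Phi^+$ and $0$ otherwise. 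The hypothesis guarantees $c_\gamma=0$ unless $\gamma\in N(x_v^{-1})$ with $\hgt(\gamma)\geq k$, or $\gamma\in x_v\Phi_L^+$.

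First I would handle the base case $i=1$: we must show $\ad_{-X}(E_\beta)\in \bigoplus_{\delta\in N(x_v^{-1}),\,\hgt(\delta)>k}\fg_\delta \oplus \bigoplus_{\delta\in x_v\Phi_L^+}\fg_\delta$. Expanding, $\ad_{-X}(E_\beta) = -\sum_\gamma c_\gamma [E_\gamma, E_\beta]$, and only the terms with $\gamma+\beta\in\Phi^+$ survive, each contributing a multiple of $E_{\gamma+\beta}$. For each surviving $\gamma$ (so $c_\gamma\neq 0$), the hypothesis gives two cases for $\gamma$, and $\beta$ itself satisfies $\beta\in N(x_v^{-1})$ with $\hgt(\beta)\geq k$ or $\beta\in x_v\Phi_L^+$. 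So I would run through the (at most) four combinations and check in each that $\gamma+\beta$ lies in $N(x_v^{-1})$ with height $>k$, or in $x_v\Phi_L^+$. The cases where either $\gamma$ or $\beta$ lies in $x_v\Phi_L^+ = x_vN(w_v^{-1})$: here I would use the coset structure, namely that $x_v\Phi_L^+\subseteq N(v^{-1})$ (from $N(v^{-1}) = N(x_v^{-1})\sqcup x_v\Phi_L^+$, stated just before Lemma~\ref{lem: v-action}), so $\gamma+\beta$ — being a sum of two roots in $N(v^{-1})$ that is again a positive root — lies in $N(v^{-1})$; then one must further argue it lands in the correct piece. Concretely, applying $x_v^{-1}$ (or $v^{-1}$) and using that $\Phi_L^+ = \Delta_L^+$-span is closed under addition within $\Phi_L$, combined with Equation~\eqref{eq: W_L-norm} which says $W_L$ normalizes $\Phi^+\setminus\Phi_L^+$, should sort $\gamma+\beta$ into $x_v\Phi_L^+$ precisely when enough of its constituents came from there, and otherwise into $N(x_v^{-1})$ with height strictly larger than $\max(\hgt(\gamma),\hgt(\beta))\geq k$ (since adding a positive root strictly increases height). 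In the remaining case $\gamma,\beta\in N(x_v^{-1})$ with $\hgt(\gamma),\hgt(\beta)\geq k$, the sum $\gamma+\beta$ (if a root) has height $\geq 2k > k$, and it lies in $N(v^{-1})$ as above; I would need to confirm it is not in $x_v\Phi_L^+$ or, if it is, that's still allowed by the target — either way the conclusion holds since both summands of the target are permitted.

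The inductive step is then formal: writing $\ad_{-X}^i(E_\beta) = \sum_{\delta\in \mathcal{D}} a_\delta E_\delta$ where $\mathcal{D}\subseteq \{\delta\in N(x_v^{-1}): \hgt(\delta)>k\}\cup x_v\Phi_L^+$ by the inductive hypothesis, each $E_\delta$ in this sum satisfies the hypothesis of the $i=1$ case (with the same $k$, since $\hgt(\delta)>k\geq k$ handles the first type and $x_v\Phi_L^+$ handles the second), so $\ad_{-X}(E_\delta)$ lands in the target space, and by linearity so does $\ad_{-X}^{i+1}(E_\beta)$. I expect the main obstacle to be the bookkeeping in the base case: precisely tracking, when one or both of $\gamma,\beta$ lie in $x_v\Phi_L^+$, whether $\gamma+\beta$ falls into $x_v\Phi_L^+$ or into the high-height part of $N(x_v^{-1})$ — this requires carefully using the decomposition $v = x_v w_v$ with $w_v$ the longest element of $W_L$, the disjoint decomposition of $N(v^{-1})$, and the normalization property~\eqref{eq: W_L-norm}, rather than any single slick identity.
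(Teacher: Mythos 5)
Your proposal is correct and follows essentially the paper's own argument: the paper simply avoids the induction on $i$ by writing any root $\delta$ with $E_{\delta}$ appearing in $\ad_{-X}^{i}(E_{\beta})$ as $\beta$ plus a nonempty nonnegative-integer combination of roots from the two allowed index sets, and then running the same case analysis using closure of $N(v^{-1})$ and of $x_v\Phi_L^+$ under addition and the strict increase of height. The ``sorting'' you anticipate in the mixed cases is unnecessary: once $\hgt(\delta)>k$ and $\delta\in N(v^{-1})=N(x_v^{-1})\sqcup x_v\Phi_L^+$, the root $\delta$ lies in the stated direct sum no matter which piece contains it, which is exactly how the paper concludes and is the same observation you make in your final case.
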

\begin{proof}  Fix $i\geq 1$ and let $\ad^{i}_{-X}(E_{\beta}) = \sum_{\delta \in \Phi^+} d_{\delta}E_{\delta}$ for some $d_{\delta}\in \C$. Suppose $\delta \in \Phi^+$ such that $d_{\delta} \neq 0$, i.e., $E_{\delta}$ occurs as a summand of $\ad_{-X}^{i}(E_{\beta})$.  Since 
\[
X= \sum_{\substack{\gamma\in N(x_v^{-1})\\ \hgt(\gamma)\geq k}} c_{\gamma}E_{\gamma} + \sum_{\gamma\in x_v\Phi_L^+} c_{\gamma}E_{\gamma},
\]
it follows from the definition of the adjoint action in~\eqref{eq: Lie bracket} that
\begin{eqnarray*}
	\delta = \beta + \sum_{\substack{\gamma\in N(x_v^{-1})\\ \hgt(\gamma)\geq k}} n_{\gamma}\gamma + \sum_{\gamma \in x_v \Phi_L^+} n_{\gamma}\gamma
\end{eqnarray*}
for some $n_{\gamma}\in \Z_{\geq 0}$ such that $n_{\gamma}\neq 0$ for at least one $\gamma\in N(v^{-1})$ appearing in the index sets above because $i\geq 1$.  Note that $\delta \in N(v^{-1})$ since all roots in the equation above are elements of $N(v^{-1})$, and this set is closed under addition.

If $\beta\in N(x_v^{-1})$ such that $\hgt(\beta)\geq k$, then $\hgt(\delta)>\hgt(\beta)\geq k$.  If $\beta\in x_v\Phi_L^+$, we consider two possible cases.  Either $n_{\gamma}=0$ for all $\gamma\in N(x_v^{-1})$ or there exists at least one $\gamma\in N(x_v^{-1})$ such that $\hgt(\gamma)\geq k$ and $n_{\gamma}\neq 0$.  In the latter case, $\hgt(\delta)>\hgt(\gamma)\geq k$.  In the former case, $\delta \in x_v\Phi_L^+$ since $x_v\Phi_L^+$ is closed under addition (because $\Phi_L^+$ is).  We conclude that in every possible case, either $\hgt(\delta)> k$ or $\delta\in x_v\Phi_L^+$.  This proves the desired result since $N(v^{-1}) \setminus x_v\Phi_L^+ = N(x_v^{-1})$.
\end{proof}

The next proposition is a key step in the proof of Theorem \ref{thm: shortest coset closures} below.

\begin{Prop} \label{prop: key step}
If $uvB\in C_v\cap \B(S)$, then $u\in x_vU_L x_v^{-1}$.
\end{Prop}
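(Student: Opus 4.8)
The plan is to reformulate membership in $C_v\cap\B(S)$ as a system of vanishing conditions on the root-vector coefficients of $u$ and then peel those coefficients off one height at a time. Since $uvB\in C_v$ forces $u\in U^v$, I would write $u=\exp(X)$ for the unique $X=\sum_{\gamma\in N(v^{-1})}c_\gamma E_\gamma\in\fu^v$ (the exponential map from $\fu^v$ to $U^v$ being bijective). Recalling that $N(v^{-1})=N(x_v^{-1})\sqcup x_v\Phi_L^+$ and that $x_vU_Lx_v^{-1}=\exp\big(\bigoplus_{\gamma\in x_v\Phi_L^+}\fg_\gamma\big)$, the assertion $u\in x_vU_Lx_v^{-1}$ is equivalent to the single statement that $c_\gamma=0$ for every $\gamma\in N(x_v^{-1})$. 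So the whole proposition reduces to proving this vanishing.

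Next I would unwind the condition $(uv)^{-1}\cdot S\in H_\Delta$. Put $Y:=\Ad(u^{-1})S-S=\sum_{i\ge 1}\tfrac{1}{i!}\ad_{-X}^{i}(S)$ (using $\Ad(u^{-1})=e^{\ad_{-X}}$); since $\ad_{-X}(S)=[S,X]\in\fu^v$ and $\fu^v$ is closed under the bracket, $Y\in\fu^v$, say $Y=\sum_{\gamma\in N(v^{-1})}y_\gamma E_\gamma$. Then $(uv)^{-1}\cdot S=v^{-1}(S)+\Ad(v^{-1})Y$, where $v^{-1}(S)\in\fh\subseteq\fb\subseteq H_\Delta$, while $\Ad(v^{-1})Y\in\bigoplus_{\gamma\in N(v^{-1})}\fg_{v^{-1}(\gamma)}$ is a sum of negative root spaces whose coefficients are the $y_\gamma$ up to nonzero scalars. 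Because the negative part of $H_\Delta$ is precisely $\bigoplus_{\alpha\in\Delta}\fg_{-\alpha}$, membership in $H_\Delta$ is equivalent to requiring $y_\gamma=0$ for every $\gamma\in N(v^{-1})$ with $v^{-1}(\gamma)\notin\Delta^-$. By Lemma~\ref{lem: v-action}, $v^{-1}(\gamma)\in\Phi^-\setminus\Delta^-$ for all $\gamma\in N(x_v^{-1})$, so this already gives $y_\gamma=0$ for all $\gamma\in N(x_v^{-1})$.

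It remains to pass from the $y_\gamma$ back to the $c_\gamma$, and here I would induct on $k=\hgt(\gamma)$. Assume $c_\gamma=0$ for all $\gamma\in N(x_v^{-1})$ with $\hgt(\gamma)<k$, and fix $\beta\in N(x_v^{-1})$ with $\hgt(\beta)=k$; the goal is to isolate the $\fg_\beta$-coefficient of $Y$. Writing $\ad_{-X}(S)=[S,X]=\sum_{\gamma\in N(v^{-1})}c_\gamma\gamma(S)E_\gamma$, the induction hypothesis lets me discard every summand indexed by a $\gamma\in N(x_v^{-1})$ of height $<k$. Feeding the surviving generators into Lemma~\ref{lem: technical step} (with this same $k$) shows that $\ad_{-X}^{\,i-1}(E_\gamma)$, for each $i\ge 2$, lies in $\bigoplus_{\delta\in N(x_v^{-1}),\,\hgt(\delta)>k}\fg_\delta\oplus\bigoplus_{\delta\in x_v\Phi_L^+}\fg_\delta$, which contains no $\fg_\beta$ (as $\beta\in N(x_v^{-1})$ has height exactly $k$, and $N(x_v^{-1})$ is disjoint from $x_v\Phi_L^+$). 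Hence only the $i=1$ term of $Y$ contributes to the $\fg_\beta$-coefficient, and that contribution is $c_\beta\,\beta(S)$. Since $\beta\in N(v^{-1})\subseteq\Phi^+\setminus\Phi_M^+$ we have $\beta(S)\ne 0$, so $y_\beta=0$ forces $c_\beta=0$, completing the induction and hence the proof.

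I expect the only genuinely delicate point to be controlling the higher-order ($i\ge 2$) terms of the expansion $\Ad(u^{-1})=e^{\ad_{-X}}$, and this is exactly what Lemma~\ref{lem: technical step} was built to handle, so once the translation in the first two paragraphs is in place the rest is essentially bookkeeping. The conceptual crux is really Lemma~\ref{lem: v-action}: it is what singles out the standard Hessenberg space, since the negative part of $H_\Delta$ being exactly $\bigoplus_{\alpha\in\Delta}\fg_{-\alpha}$ is precisely what forces every coefficient of $X$ indexed by $N(x_v^{-1})$ to die while leaving room for the coefficients indexed by $x_v\Phi_L^+$, i.e.\ for $u$ to land in $x_vU_Lx_v^{-1}$.
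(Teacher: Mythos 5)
Your proposal is correct and follows essentially the same route as the paper: reduce to showing $c_\gamma=0$ for $\gamma\in N(x_v^{-1})$, use Lemma~\ref{lem: v-action} to turn the Hessenberg condition into vanishing of the coefficients indexed by $N(x_v^{-1})$, and then induct on height using Lemma~\ref{lem: technical step} to kill the higher-order terms of $e^{\ad_{-X}}(S)$, with $\beta(S)\neq 0$ (as $\beta\in\Phi^+\setminus\Phi_M^+$) finishing each step. The only differences are cosmetic: you package the whole correction $\Ad(u^{-1})S-S$ into one element $Y$ and absorb the base case into the vacuous $k=1$ instance of the induction, where the paper separates the linear term from the $i\geq 2$ tail and treats height $1$ separately.
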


\begin{proof}  Using the description of $C_v$ given in Equation~\eqref{eq: Schubert cell}, we begin by noting that if $uvB\in C_v$, then we may assume $u\in U^v = \prod_{\gamma\in N(v^{-1})} U_{\gamma}$.  Since $U^v$ is unipotent, the exponential map $\exp: \fu^v \to U^v$ is a diffeomorphism.  Therefore
\[
	u = \exp (X) \textup{ for some } X= \sum_{\gamma\in N(v^{-1})} c_{\gamma }E_{\gamma}
\]
with $c_{\gamma}\in \C$.  Recall that $N(v^{-1}) = N(x_v^{-1}) \sqcup x_v\Phi_L^+$.  To prove the proposition, it suffices to show that $c_{\gamma}=0$ for all $\gamma\in N(x_v^{-1})$.  Given this fact, $X= \sum_{\gamma \in x_v \Phi_L^+} c_{\gamma}E_{\gamma}$ so 
\[
	x_v^{-1}\cdot X = \sum_{\gamma\in x_v\Phi_L^+} c_{\gamma}E_{x_v^{-1}(\gamma)} \in \fu_L \Rightarrow \exp(x_v^{-1}\cdot X) \in U_L \Rightarrow x_v^{-1} \exp(X) x_{v} \in U_L
\]
which implies $u \in x_vU_L x_v^{-1}$ as desired.

We will prove $c_{\gamma}=0$ for $\gamma\in N(x_v^{-1})$ using induction on $\hgt(\gamma)$.  First we outline some additional notation and recall some facts about the adjoint action.  Since $u^{-1}\cdot S = \Ad(\exp(-X))(S) = \exp (\ad_{-X})(S)$ and $[S, E_{\gamma}] = \gamma(S)E_{\gamma}$ for all $\gamma\in \Phi^+$, we have
\begin{eqnarray*}
	u^{-1}\cdot S &=&  S + \sum_{i=1}^{\infty} \frac{1}{i!} \ad_{-X}^i(S) =S+ \ad_{-X}(S)+\sum_{i=2}^{\infty} \frac{1}{i!} \ad_{-X}^i(S) \\
	 &=& S + \left[ S\,, \sum_{\gamma\in N(v^{-1})} c_{\gamma} E_{\gamma} \right] + \sum_{i=2}^{\infty} \frac{1}{i!} \ad_{-X}^i(S)\\
	&=& S + \sum_{\gamma\in N(v^{-1})} c_{\gamma}\gamma(S)E_{\gamma} + \sum_{i=2}^{\infty} \frac{1}{i!} \ad_{-X}^i(S).
\end{eqnarray*}
Note that although the index set is infinite, the sum above is finite since $X\in \fu$ is a nilpotent element of $\fg$.  Consider 
\begin{eqnarray}\label{eq: ad-eqn} \begin{split}
	 \sum_{i=2}^{\infty} \frac{1}{i!} \ad_{-X}^{i}(S) &= \sum_{i=2}^{\infty} \ad_{-X}^{i-1}\left( \ad_{-X}(S) \right)\\ 
	 &=\sum_{i=2}^{\infty} \frac{1}{i!}\ad_{-X}^{i-1}\left(\sum_{\beta\in N(v^{-1})} c_{\beta}\,\beta(S)E_{\beta}  \right)\\
	 &= \sum_{i=2}^{\infty} \frac{1}{i!} \sum_{\beta\in N(v^{-1})} c_{\beta}\,\beta(S)\ad_{-X}^{i-1}(E_{\beta}). 
\end{split}\end{eqnarray}
Since $\fu^v$ is closed under the adjoint action, $\ad_{-X}^{i-1}(E_{\beta}) \in \fu^v$ for all $i\geq 2$ and $\beta\in N(v^{-1})$.  Thus we may write
\[
	\sum_{i=2}^{\infty} \frac{1}{i!} \ad_{-X}^i(S) = \sum_{\gamma\in N(v^{-1})} d_{\gamma}E_{\gamma}
\]
for some $d_{\gamma}\in \C$, so $u^{-1}\cdot S = S+\sum_{\gamma\in N(v^{-1})} (\gamma(S)c_{\gamma} + d_{\gamma})E_{\gamma}$.  Since $uvB\in \B(S)$ it must be the case that
\begin{eqnarray*}\label{eq: adjoint action}
	v^{-1}\cdot u^{-1}\cdot S = v^{-1}\cdot S + \sum_{\gamma\in N(v^{-1})} (\gamma(S)c_{\gamma} + d_{\gamma}) E_{v^{-1}(\gamma)} \in H_{\Delta} = \fb\oplus \bigoplus_{\alpha\in \Delta}\fg_{-\alpha}.
\end{eqnarray*}
In particular, if $v^{-1}(\gamma)\in \Phi^- \setminus \Delta^-$ then the above equation implies $\gamma(S)c_{\gamma} + d_{\gamma}=0$.  By Lemma~\ref{lem: v-action} we conclude that $\gamma(S)c_{\gamma} + d_{\gamma}=0$ for all $\gamma\in N(x_v^{-1})$.  To prove $c_{\gamma}=0$, we have only to show that $d_{\gamma}=0$ since $\gamma(S)\neq 0$ because $\gamma \in N(v^{-1}) \subseteq \Phi^+ -\Phi_M^+$.

It follows from Equations~\eqref{eq: Lie bracket} and \eqref{eq: ad-eqn} that  
\[
	\sum_{\gamma\in N(v^{-1})} d_{\gamma}E_{\gamma} =  \sum_{i=2}^{\infty} \frac{1}{i!} \ad_{-X}^i(S) \subseteq \bigoplus_{\substack{\delta \in \Phi^+ \\ \hgt(\delta )\geq 2}} \fg_{\delta}.
\]
This implies $d_{\gamma}=0$ for all $\gamma$ such that $\hgt(\gamma) =1$, implying $c_{\gamma}=0$ for all $\gamma$ such that $\hgt(\gamma)=1$, and proving the base case.

Now assume that $c_{\gamma}=0$ for all $\gamma\in N(x_v^{-1})$ such that $\hgt(\gamma) < k$.  Equation~\eqref{eq: ad-eqn} now becomes
\begin{eqnarray*}\label{eq: ad-eqn2}
\sum_{\gamma\in N(v^{-1})} d_{\gamma}E_{\gamma} =\sum_{i=2}^{\infty} \frac{1}{i!} \left( \sum_{\substack{\beta\in N(x_{v}^{-1}) \\ \hgt(\beta)\geq k}} c_{\beta}\,\beta(S)\ad_{-X}^{i-1}(E_{\beta}) + \sum_{\beta\in x_v\Phi_L^+} c_{\beta}\,\beta(S)\ad_{-X}^{i-1}(E_{\beta}) \right).
\end{eqnarray*}
Lemma~\ref{lem: technical step} implies that
\[
	\sum_{\gamma\in N(v^{-1})} d_{\gamma}E_{\gamma} \in \bigoplus_{\substack{\delta\in N(x_v^{-1})\\ \hgt(\delta)>k}} \fg_{\gamma} \oplus \bigoplus_{\delta\in x_v\Phi_L^+} \fg_{\delta}.
\]
Therefore $d_{\gamma}=0$ for all $\gamma\in N(x_v^{-1})$ such that $\hgt(\gamma)=k$.  This implies $c_{\gamma}=0$ for all $\gamma\in N(x_v^{-1})$ such that $\hgt(\gamma)=k$ and proves the inductive step.  We conclude $c_{\gamma}=0$ for all $\gamma \in N(x_v^{-1})$ as desired.
\end{proof}

Recall that if $L\subseteq G$ is a Levi subgroup then $\B_L = L/B_L$ denotes the corresponding flag variety.  Let $\iota_L: \B_L \hookrightarrow \B$ denote the inclusion of $\B_L$ into the flag variety $\B$ induced from the inclusion $L\subseteq G$.  In particular, if $uzB_L\in  \B_L$ for some $u\in U^{z}$ and $z\in W_L$, then $\iota_L(uzB_L) = uzB$.  Any subvariety of $\B_L$ can be viewed as a subvariety of $\B$ by considering its image under $\iota_L$.  Our next result shows that the closures of certain Hessenberg-Schubert cells are isomorphic to regular semisimple Hessenberg varieties in the flag variety of Levi subgroup.

\begin{Thm}\label{thm: shortest coset closures} 
For $v\in {^M W}$ let $L=L_v$ be the Levi subgroup corresponding to $R(v)\subseteq \Delta$ and $v=x_vw_v$ be the decomposition of $v$ given in Remark \ref{lem: cosets}. Set $S_v:= x_v^{-1}\cdot S\in \fh$.  Then $\overline{C_v\cap \B(S)}\cong \B_L(S_v)$ where $\B_L(S_v)$ is the regular semisimple Hessenberg variety in $\B_L$ corresponding to the standard Hessenberg space $H_{\R(v)} := H_{\Delta}\cap \fl \subset \fl$.
\end{Thm}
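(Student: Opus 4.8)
The plan is to realize $\overline{C_v\cap\B(S)}$ as the image of the regular semisimple Hessenberg variety $\B_L(S_v)$ under an explicit closed embedding of $\B_L$ into $\B$, and then to match up the dense Hessenberg--Schubert cells on the two sides. Define $\Psi_v\colon \B_L\to\B$ by $\Psi_v(gB_L)=x_v\cdot\iota_L(gB_L)=x_vgB$; since $\iota_L$ is a closed embedding and left translation by $x_v$ is an automorphism of $\B$, the map $\Psi_v$ is itself a closed embedding. First I would record two preliminary observations. Since $x_v\in W$ normalizes $\fh$, the element $S_v=x_v^{-1}\cdot S$ lies in $\fh\subseteq\fl$, and for $\gamma\in\Phi_L$ we have $\gamma(S_v)=(x_v\gamma)(S)$; because $N(v^{-1})=N(x_v^{-1})\sqcup x_v\Phi_L^+\subseteq\Phi^+\setminus\Phi_M^+$ (as $v\in{}^MW$), the root $x_v\gamma$ lies outside $\Phi_M$, so $\gamma(S_v)\neq 0$ and $S_v$ is regular semisimple in $\fl$. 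Second, intersecting $H_\Delta=\fb\oplus\bigoplus_{\alpha\in\Delta}\fg_{-\alpha}$ with $\fl$ gives $H_{\R(v)}=\fb_L\oplus\bigoplus_{\alpha\in\R(v)}\fg_{-\alpha}$, which is precisely the standard Hessenberg space of $\fl$ relative to its simple roots $\Delta_L=\R(v)$; in particular Corollary~\ref{cor: reg-irreducible} applies to $\B_L(S_v)$, and the longest element of $W_L$ occurring there is exactly $w_v$.

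Next I would check that $\Psi_v$ carries $\B_L(S_v)$ into $\B(S)$: if $g^{-1}\cdot S_v\in H_{\R(v)}$ then $(x_vg)^{-1}\cdot S=g^{-1}\cdot S_v\in H_{\R(v)}\subseteq H_\Delta$, so $\Psi_v(gB_L)\in\B(S)$. The core of the argument is to show that $\Psi_v$ restricts to a bijection between the big Schubert cell $U_Lw_vB_L/B_L$ of $\B_L$ intersected with $\B_L(S_v)$ and the Hessenberg--Schubert cell $C_v\cap\B(S)$ (here one uses that $w_v$ is the longest element of $W_L$, so $\{u\in U_L: w_v^{-1}uw_v\in U_L^-\}=U_L$). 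For the forward inclusion, given $u'\in U_L$ with $u'w_vB_L\in\B_L(S_v)$, the element $x_vu'x_v^{-1}$ lies in $U$ because $x_v\Phi_L^+\subseteq\Phi^+$ (as $x_v\in W^L$) and in fact in $U^v$ because $x_v\Phi_L^+\subseteq N(v^{-1})$; hence $\Psi_v(u'w_vB_L)=(x_vu'x_v^{-1})\,vB\in C_v$, and the adjoint computation above (run with $g=u'w_v$) puts it in $\B(S)$. For the reverse inclusion, Proposition~\ref{prop: key step} is exactly what is needed: any point of $C_v\cap\B(S)$ has the form $uvB$ with $u\in x_vU_Lx_v^{-1}$, say $u=x_vu'x_v^{-1}$, so that $uvB=\Psi_v(u'w_vB_L)$, and the condition $uvB\in\B(S)$ unwinds, using $v=x_vw_v$ and $x_v^{-1}\cdot S=S_v$, to $(u'w_v)^{-1}\cdot S_v\in H_\Delta\cap\fl=H_{\R(v)}$, i.e. $u'w_vB_L\in\B_L(S_v)$.

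Finally I would pass to closures. By Corollary~\ref{cor: reg-irreducible} applied inside $L$ (noting $\Delta_L^-\subseteq\Phi_{H_{\R(v)}}^-$ and that the longest element of $W_L$ is $w_v$) we have $\B_L(S_v)=\overline{\,U_Lw_vB_L/B_L\,\cap\,\B_L(S_v)\,}$, while $C_v\cap\B(S)$ is dense in $\overline{C_v\cap\B(S)}$ by definition. Since $\Psi_v$ is a closed embedding it commutes with taking closures, so $\Psi_v(\B_L(S_v))=\overline{\Psi_v(U_Lw_vB_L/B_L\,\cap\,\B_L(S_v))}=\overline{C_v\cap\B(S)}$, and $\Psi_v$ restricts to an isomorphism $\B_L(S_v)\xrightarrow{\ \sim\ }\overline{C_v\cap\B(S)}$ (using also that $\B(S)$ is closed in $\B$, so the two notions of closure agree). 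I expect the main obstacle to be the bookkeeping in the cell-level bijection — in particular verifying $x_vU_Lx_v^{-1}\subseteq U^v$ and correctly tracking the adjoint action through the factorization $v=x_vw_v$ — since once those identities are in place, together with Proposition~\ref{prop: key step}, the passage to closures is essentially formal.
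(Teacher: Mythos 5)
Your proposal is correct and follows essentially the same route as the paper: you identify $C_v\cap\B(S)$ with the $x_v$-translate of the image of the big Hessenberg--Schubert cell $C_{L,w_v}\cap\B_L(S_v)$ (using Proposition~\ref{prop: key step} for the reverse inclusion, exactly as the paper does), and then pass to closures via Corollary~\ref{cor: reg-irreducible}. Packaging the translation and $\iota_L$ into a single closed embedding $\Psi_v$ that commutes with closures is just a slightly more explicit phrasing of the paper's observation that translation respects closure relations in $\B$.
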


\begin{proof}  We can view any element of $\fh$ as a semisimple element of $\fl\subseteq \fg$ by restriction.  First we show that $S_v$ is a regular semisimple element of $\fl$, or equivalently that $\gamma(S_v)\neq 0$ for all $\gamma\in \Phi_L$.  If $\gamma\in \Phi_L^{+}$ then $x_v(\gamma)\in x_v\Phi_L^+ \subseteq N(v^{-1}) \subseteq \Phi^+ \setminus \Phi_M^+$ because $v\in {^M W}$. This implies $\gamma(S_v) = \gamma(x_v^{-1}\cdot S) = x_v(\gamma)(S)\neq 0$ so $S_v$ is indeed a regular element of $\fl$.

By Corollary~\ref{cor: reg-irreducible}, $\B_L(S_v) = \overline{C_{L,w_v} \cap \B_L(S_v)}$ where $C_{L,w_v}=B_L w_v B_L/B_L$ is a Schubert cell in $\B_L$.  Let $U_{L,w_v} = \{ u\in U_L : uw_vB_L \in \B_L(S_v) \}$ so $C_{L,w_v}\cap \B_L(S_v) = U_{L,w_v}w_vB_L/B_L$ using the description of Schubert cells given in Equation~\eqref{eq: Schubert cell}. Furthermore, $C_{L,w_v}\cap \B_L(S_v)$ can be viewed as an open subset in $\B$ by identifying it with its image $\iota_L(C_{L,w_v}\cap \B_L(S_v)) = U_{L,w_v}w_vB/B$.  We will prove  that $C_v\cap \B(S)$ is the $x_v$-translate of this image, namely $C_v\cap \B(S) =  x_v U_{L,w_v}w_vB/B$.  Our result then follows from the fact that translation respects closure relations in the flag variety so 
\[
	\overline{C_v\cap \B(S)} = x_v \overline{U_{L,w_v}w_vB/B} \cong \overline{C_{L,w_v}\cap \B_L(S_v)} = \B_L(S_v).
\]

Each element of $C_{L,w_v}\cap \B_L(S_v)$ is of the form $uw_vB_L$ for some $u\in U_{L,w_v}$.  Consider $\iota_L(uw_vB_L) = uw_vB$.  Since $uw_vB_L\in C_{L,w_v} \cap \B_L(S_v)$ we get
\begin{eqnarray*}
	w_v^{-1}u^{-1}\cdot S_v \in  H_{\R(v)} \Rightarrow w_v^{-1}u^{-1}x_v^{-1}\cdot S \in H_{\Delta} \cap \fl \subseteq H_{\Delta}
\end{eqnarray*}
so $x_vuw_vB \in C_v\cap \B(S)$, implying $x_vU_{L,w_v}w_vB/B \subseteq C_v\cap \B(S)$.  

If $uvB\in C_v\cap \B(S)$, Proposition \ref{prop: key step} implies that $u \in x_v U_L x_v^{-1}$.  Let $u' = x_v^{-1}  u x_v\in U_L$.  Our assumption that $uvB\in C_v\cap \B(S)$ implies  
\[
	w_v^{-1}(u')^{-1} x_v^{-1}\cdot S = v^{-1} u^{-1} \cdot S \in H_{\Delta} \Rightarrow w_v^{-1}(u')^{-1}\cdot S_v \in H_{\Delta}\cap \fl=H_{R(v)}
\]
since $u'w_v\in L$.  We conclude that $u'\in U_{L,w_v}$ and $uvB = x_v\iota_L(u'w_vB_L)$.  Thus $C_v\cap \B(S)\subseteq x_vU_{L,w_v}w_vB/B$ and $C_v\cap \B(S)= x_vU_{L,w_v}w_vB/B$.
\end{proof}

This theorem yields a cellular decomposition of $\overline{C_v\cap \B(S)}$  for each $v\in {^M W}$ using the Hessenberg-Schubert decomposition 
\[
\B_L(S_v) = \bigsqcup_{z\in W_L} C_{L,z}\cap \B_L(S_v)
\] 
where $C_{L,z}$ denotes the Schubert cell  $B_LzB_L/B_L$ in $\B_L$.  Let $U_{L,z}:= \{u \in U^z: u zB_L\in  \B_L(S_v) \}$.  It follows from the proof of Theorem~\ref{thm: shortest coset closures} that
\begin{eqnarray}\label{eq: description}
	\overline{C_v\cap \B(S)} = \bigsqcup_{z\in W_L} x_vU_{L,z} zB/B \subseteq \bigsqcup_{z\in W_L}C_{x_vz}.
\end{eqnarray}
since $C_{L,z}\cap \B_L(S_v) = U_{L,z}zB_L/B_L$ and $\iota_L(U_{L,z}zB_L/B_L) = U_{L,z}zB/B$.  In particular, Equation~\eqref{eq: description} and Lemma~\ref{lem: cosets} imply
\begin{eqnarray}\label{eq: description2}
\overline{C_v\cap \B(S)}\subseteq \bigsqcup_{\tau\in ^MW} C_{\tau}.
\end{eqnarray}

The next lemma partially characterizes the closure relations among Hessenberg-Schubert cells $C_v\cap \B(S)$ for $v\in {^M W}$.  Results of this nature are rare.  For example, a full understanding of the closure relations between the cells paving Springer fibers is unknown.  For each $v\in {^M W}$ define 
\[
\Cl(v):=\{ \tau\in {^M W} : \tau\neq v,\,\tau=x_vz \textup{ for some } z\in W_{L_v}, \textup{ and } R(\tau)=R(z) \}.
\]

\begin{Lem}\label{lem: closure relations}  
 If $v$ and  $\tau$ are distinct elements of ${^M W}$ then
\[
	C_{\tau}\cap \B(S) \subset \overline{C_v\cap \B(S)}
\]
if and only if $\tau\in \Cl(v)$.
\end{Lem}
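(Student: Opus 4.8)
The plan is to derive the lemma from Theorem~\ref{thm: shortest coset closures} and the explicit decomposition~\eqref{eq: description}, reducing the geometric containment to a combinatorial comparison of inversion sets. Write $L = L_v$, so $\Delta_L = \R(v)$, and recall from~\eqref{eq: description} that
\[
\overline{C_v\cap\B(S)} \;=\; \bigsqcup_{z'\in W_L} x_v U_{L,z'}\, z'B/B \;\subseteq\; \bigsqcup_{z'\in W_L} C_{x_v z'},
\]
with each piece $x_v U_{L,z'}z'B/B$ lying in $C_{x_v z'}$. Since the Schubert cells $\{C_w\}_{w\in W}$ partition $\B$ and the elements $x_v z'$ $(z'\in W_L)$ are pairwise distinct, the first step is to observe that for any Schubert cell $C_\tau$,
\[
\overline{C_v\cap\B(S)}\cap C_\tau \;=\;
\begin{cases}
x_v U_{L,z}\, zB/B & \text{if } \tau = x_v z \text{ for some } z\in W_L,\\[2pt]
\emptyset & \text{otherwise,}
\end{cases}
\]
and that in the first case $x_v U_{L,z}zB/B$ is nonempty --- it contains $x_v zB/B$ because $(x_v z)^{-1}\cdot S = z^{-1}\cdot S_v\in\fh\subseteq H_{\R(v)}$, so $e\in U_{L,z}$ --- and closed in $C_\tau$, being the intersection of $C_\tau$ with the $\B$-closed set $\overline{C_v\cap\B(S)}$. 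Since $C_\tau\cap\B(S)$ is isomorphic to affine space by Proposition~\ref{prop: cell dim}, hence irreducible, it follows that the nonempty closed subvariety $x_v U_{L,z}zB/B$ equals $C_\tau\cap\B(S)$ if and only if the two have the same dimension.

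Next I would compute those dimensions. On one hand $\dim(C_\tau\cap\B(S)) = |\R(\tau)|$ by~\eqref{eqn:dim}, valid since $\tau = x_v z\in{^M W}$ by Lemma~\ref{lem: cosets}. On the other hand, by the isomorphism of Theorem~\ref{thm: shortest coset closures} the set $x_v U_{L,z}zB/B$ is the $x_v$-translate of $C_{L,z}\cap\B_L(S_v)$, and $S_v$ is a \emph{regular} semisimple element of $\fl$ (established in the proof of that theorem), so Example~\ref{ex: regular ss dimn} applied inside $L$ shows $\dim(C_{L,z}\cap\B_L(S_v))$ is the number of right descents of $z$ in $W_L$, i.e. $|\{\alpha\in\Delta_L : z(\alpha)\in\Phi_L^-\}|$. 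Because $z\in W_L$ normalizes $\Phi^+\setminus\Phi_L^+$ by~\eqref{eq: W_L-norm}, all inversions of $z$ lie in $\Phi_L^+$, so this set is exactly $N(z)\cap\Delta = \R(z)$, whence $\dim(x_v U_{L,z}zB/B) = |\R(z)|$. Finally, since $x_v\in W^L$ forces $\ell(x_v z) = \ell(x_v)+\ell(z)$ by Lemma~\ref{lem: w decomp}, Lemma~\ref{lem: inversion set decomp} gives $N(\tau) = N(z)\sqcup z^{-1}N(x_v)$, hence $\R(\tau) = \R(z)\sqcup\bigl(z^{-1}N(x_v)\cap\Delta\bigr)$, so $\R(\tau)\supseteq\R(z)$ unconditionally.

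Both directions are then immediate. For ``$\Leftarrow$'': if $\tau\in\Cl(v)$ then $\tau = x_v z$ with $z\in W_L$ and $\R(\tau)=\R(z)$, so $x_v U_{L,z}zB/B$ and $C_\tau\cap\B(S)$ have the common dimension $|\R(z)|$ and hence coincide, giving $C_\tau\cap\B(S) = x_v U_{L,z}zB/B\subseteq\overline{C_v\cap\B(S)}$. For ``$\Rightarrow$'': if $C_\tau\cap\B(S)\subseteq\overline{C_v\cap\B(S)}$, then since $C_\tau\cap\B(S)$ is nonempty and contained in $C_\tau$, the dichotomy above forces $\tau = x_v z$ for some $z\in W_L$, and then $C_\tau\cap\B(S) = \overline{C_v\cap\B(S)}\cap C_\tau = x_v U_{L,z}zB/B$; comparing dimensions gives $|\R(\tau)|=|\R(z)|$, which with $\R(\tau)\supseteq\R(z)$ forces $\R(\tau)=\R(z)$, so $\tau\in\Cl(v)$. (If the ``$\subset$'' is meant strictly, that is automatic when $\tau\neq v$, since $\overline{C_v\cap\B(S)}$ meets $C_v$ while $C_\tau\cap\B(S)$ does not.)

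The step I expect to require the most care is the first one: verifying from~\eqref{eq: description} that distinct elements of $W_L$ really do contribute to distinct Schubert cells of $\B$, so that $\overline{C_v\cap\B(S)}\cap C_\tau$ is genuinely the \emph{entire} cell $x_v U_{L,z}zB/B$ of that decomposition (and closed in $C_\tau$) rather than a proper piece of it. With that secured, the argument collapses to the dimension count comparing $|\R(\tau)|$ and $|\R(z)|$, which the regular-semisimple dimension formula (Example~\ref{ex: regular ss dimn}) and the inversion-set decomposition (Lemma~\ref{lem: inversion set decomp}) resolve.
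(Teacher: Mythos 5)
Your proof is correct, and the forward direction is essentially the paper's: both intersect the decomposition \eqref{eq: description} with $C_{\tau}$, conclude $\tau=x_vz$ for some $z\in W_L$, and compare $|\R(\tau)|$ with $|\R(z)|$ via the dimension formula. Where you genuinely diverge is the implication $\tau\in\Cl(v)\Rightarrow C_{\tau}\cap\B(S)\subseteq\overline{C_v\cap\B(S)}$. The paper proves the equality $C_{\tau}\cap\B(S)=x_vU_{L_v,z}zB/B$ by a second pass through explicit unipotent coordinates: it applies Proposition~\ref{prop: key step} to $\tau$ itself, uses $L_{\tau}\subseteq L_v$, $w_{\tau}U_{L_{\tau}}w_{\tau}^{-1}=U_{L_{\tau}}^-$ and $zU_{L_{\tau}}^-z^{-1}\subseteq U^{z}$ to place $x_v^{-1}ux_v$ in $U^{z}$, and then verifies the Hessenberg condition inside $\fl$. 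You instead note that $x_vU_{L,z}zB/B=\overline{C_v\cap\B(S)}\cap C_{\tau}$ is a nonempty closed subset of the irreducible affine cell $C_{\tau}\cap\B(S)$ and deduce the equality from the dimension count $|\R(z)|=|\R(\tau)|$, computed via Theorem~\ref{thm: shortest coset closures}, Example~\ref{ex: regular ss dimn} applied inside $L$, and \eqref{eqn:dim} together with Lemma~\ref{lem: cosets}; the unconditional inclusion $\R(z)\subseteq\R(\tau)$ from Lemma~\ref{lem: inversion set decomp} is the same ingredient the paper invokes in its forward direction. Your route is shorter and avoids re-running the Proposition~\ref{prop: key step} machinery, at the cost of appealing to the general fact that a closed subset of an irreducible variety of full dimension is the whole variety; the paper's route is more hands-on and produces the concrete coordinate description of $C_{\tau}\cap\B(S)$ as $x_vU_{L_v,z}zB/B$ directly, in the spirit of its later patch computations. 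The point you flag as delicate is indeed the load-bearing one and is fine: each piece $x_vU_{L,z'}z'B/B$ lies in the Schubert cell $C_{x_vz'}$ because $U_{L,z'}\subseteq U^{z'}$ and $\ell(x_vz')=\ell(x_v)+\ell(z')$, and these cells are pairwise distinct, so intersecting with $C_{\tau}$ isolates exactly one piece, closed in $C_{\tau}$; the paper uses the same fact implicitly (for instance in the proof of Theorem~\ref{thm: irreducible components}).
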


\begin{proof}  If $v$ and $\tau$ are distinct elements of ${^M W}$ such that $C_{\tau} \cap \B(S)\subseteq \overline{C_v\cap \B(S)}$ then the description of $\overline{C_v\cap \B(S)}$ in~\eqref{eq: description} implies $C_{\tau} \cap \B(S) = x_v U_{L_v,z}zB/B$ for some $z\in W_L$, so $\tau=x_vz$ and $R(z) \subseteq R(\tau)$.  Furthermore, by Equation~\eqref{eqn:dim} we have
\[
	 |R(\tau)| = \dim(C_{\tau}\cap \B(S))   = \dim(x_v U_{L_v,z}zB/B) =\dim(C_{L,z}\cap \B_L(S_v)) = |R(z)|,
\]
so $R(z) = R(\tau)$ and $\tau\in \Cl(v)$.

Now we assume $\tau\in \Cl(v)$, so $\tau=x_v z$ for some $z\in W_{L_v}$ and $R(\tau) = R(v)$.  Given these assumptions, we have $x_v U_{L_v,z}zB/B \subseteq C_{\tau} \cap \B(S)$.  Our goal is to prove that this is an equality.  Consider the decomposition of $\tau$ defined in Remark \ref{rem: L-decomp}, namely $\tau = x_{\tau}w_{\tau}$.  Since $\Delta_{L_{\tau}} = R(\tau) = R(z) \subseteq \Delta_{L_v}$ it follows that $L_{\tau} \subseteq L_v$ and $U_{L_\tau} \subseteq U_{L_v}$.  Suppose $u\tau B \in C_{\tau}\cap \B(S)$.  By Proposition \ref{prop: key step}, if $u\tau B\in C_{\tau} \cap \B(S)$ then $u\in x_{\tau} U_{L_{\tau}} x_{\tau}^{-1} = \tau U_{L_\tau}^- \tau^{-1}$ because $w_{\tau} U_{L_\tau} w_{\tau}^{-1} = U_{L_{\tau}}^-$.  Therefore $x_v^{-1} u x_v \in z U_{L_{\tau}}^- z^{-1}$.  

Since $\Delta_{L_{\tau}} = R(\tau) = R(z)$ we know $\Delta_{L_\tau} \subseteq N(z)$ and therefore $\Phi_{L_{\tau}}^+ \subseteq N(z)$.  This together with the fact that $z\in W_{L_v}$ implies $z(\Phi_{L_{\tau}}^-) \subseteq \Phi_{L_v}^+$ so $z U_{L_{\tau}}^- z^{-1} \subseteq U^z$.  Let $u' = x_v^{-1} u x_v \in U^z$.  Our assumption that $u\tau B\in C_{\tau}\cap \B(S)$ implies
\[
	z^{-1} (u')^{-1} x_v^{-1} \cdot S = \tau^{-1} u^{-1} \cdot S \in H_{\Delta} \Rightarrow z^{-1} (u')^{-1}\cdot S_v \in H_{\Delta}\cap \fl = H_{R(v)}
\]
since $u'z \in L_v$.  Therefore $u\tau B = x_v u' w_v B \in x_vU_{L_v,z}zB/B$ and we conclude $C_{\tau}\cap \B(S)=x_vU_{L_v,z}zB/B \subseteq \overline{C_v\cap \B(S)}$.
\end{proof}


\begin{example}\label{ex: 2-2 part 2} Building on Example~\ref{ex: 2-2}, suppose that $\fg= \mathfrak{gl}_4(\C)$ and $S=\text{diag}(1,1,-1,-1)$. From the table in Example~\ref{ex: 2-2}, we see that if $v=s_2s_1s_3$ then $\Cl(v) = \{ s_2s_1, s_2s_3 \}$ and Lemma~\ref{lem: closure relations} implies
\[
	C_{s_2s_1}\cap \B(S),\; C_{s_2s_3}\cap \B(S) \subset \overline{C_{s_2s_3s_1} \cap \B(S)}.
\]
The decomposition in Equation \eqref{eq: description} becomes
\[
	\overline{C_{s_2s_3s_1} \cap \B(S)} = (C_{s_2s_3s_1} \cap \B(S)) \sqcup (C_{s_2s_1}\cap \B(S)) \sqcup (C_{s_2s_3}\cap \B(S)) \sqcup s_2B/B
\]
since $U_{L,e} = \{e\}$.  Note that $\Cl(v)$ predicts which cells $C_\tau \cap B(S) $ are completely contained in $
\overline{C_{v} \cap \B(S)}$, but there may also be portions of other Hessenberg-Schubert cells contained in 
$\overline{C_{v} \cap \B(S)}$. The set $\Cl(v)$ will be used in Theorem \ref{thm: irreducible components} to describe the 
irreducible components of $\B(S)$.  
   Similar calculations show
\[
	\overline{C_{s_2s_3s_1s_2}\cap \B(S)} = (C_{s_2s_3s_1s_2}\cap \B(S)) \sqcup s_2s_3s_1B
\]
and
\[
	\overline{C_{s_2}\cap \B(S)} = (C_{s_2}\cap \B(S)) \sqcup eB.
\]
\end{example}

 In order to identify the irreducible components of $\B(S)$ we will need the following lemma.

\begin{Lem}\cite[Lemma 39.2.1]{TY05} \label{lem: M-orbit closure}
Let $M$ be an algebraic group, $K$ a parabolic subgroup of $M$, $X$ a variety with an $M$-action, and $Y$ a $K$-stable closed subset of $X$.  Then the $M$ orbit $M(Y)$ is a closed subset of $X$.
\end{Lem}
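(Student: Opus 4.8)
Since this lemma is a standard fact about algebraic transformation groups (quoted here from \cite[Lemma 39.2.1]{TY05}), the plan is to recall the short proof that rests on the completeness of $M/K$. The two inputs are: (i) because $K$ is parabolic in $M$, the homogeneous space $M/K$ is a complete variety; and (ii) for any complete variety $Z$ and any variety $X$, the projection $X\times Z\to X$ is a closed map. Granting these, closedness of $M(Y)$ follows by exhibiting $M(Y)$ as the image, under such a projection, of a closed incidence locus.

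\textbf{Key steps.} First I would form the incidence locus
\[
	\Gamma := \{ (x, mK) \in X\times (M/K) : m^{-1}\cdot x \in Y \}.
\]
This is well-defined: if $m' = mk$ with $k\in K$, then $(m')^{-1}\cdot x = k^{-1}\cdot(m^{-1}\cdot x)$, which lies in $Y$ if and only if $m^{-1}\cdot x$ does, because $Y$ is $K$-stable. Second, I would check that $\Gamma$ is closed in $X\times(M/K)$; this is local on the $M/K$ factor. Over a Zariski-open $U\subseteq M/K$ on which the $K$-torsor $M\to M/K$ admits a section $s\colon U\to M$, the intersection $\Gamma\cap(X\times U)$ equals $\{(x,u) : s(u)^{-1}\cdot x\in Y\}$, which is the preimage of the closed set $Y$ under the morphism $(x,u)\mapsto s(u)^{-1}\cdot x$, hence closed; such sections exist since $K$ is parabolic (translates of the big cell cover $M/K$). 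Third, I would observe that $M(Y) = \mathrm{pr}_X(\Gamma)$, since $x\in M(Y)$ iff $x = m\cdot y$ for some $m\in M$ and $y\in Y$, iff $m^{-1}\cdot x\in Y$ for some $m$. By (i) and (ii) the projection $\mathrm{pr}_X\colon X\times(M/K)\to X$ is closed, so $M(Y) = \mathrm{pr}_X(\Gamma)$ is closed in $X$.

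\textbf{Main obstacle.} The only genuinely nonformal point is the closedness of $\Gamma$, which requires working locally over $M/K$ with an actual section of $M\to M/K$; over $\C$ with $K$ parabolic this is routine, so I would not dwell on it. Equivalently, one can package the argument through the associated fiber bundle $M\times^K Y\to M/K$ and the induced morphism $M\times^K Y\to X$, showing the latter is proper (a closed immersion into $(M/K)\times X$ followed by the proper projection to $X$); this amounts to the same local trivialization. Since the statement is quoted verbatim from \cite{TY05}, in the paper itself the citation suffices and no proof is reproduced.
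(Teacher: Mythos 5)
Your proof is correct and is the standard argument: the paper itself does not reprove this lemma but simply quotes it from \cite[Lemma 39.2.1]{TY05}, whose proof runs exactly as yours does, namely via the closed incidence locus in $X\times (M/K)$ (equivalently the associated bundle $M\times^K Y$), well-defined by $K$-stability of $Y$, together with completeness of $M/K$, so that $M(Y)$ is the image of a closed set under the proper projection $X\times(M/K)\to X$. Nothing further is needed.
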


Recall that $M$ acts on $\B(S)$, and since $M$ is a connected algebraic group, the irreducible components of $\B(S)$ are $M$-invariant \cite[\S8.2, Proposition (d)]{H75}.  In fact, we have the following characterization of the irreducible components of $\B(S)$.

\begin{Thm}\label{thm: irreducible components}  The irreducible components of $\B(S)$ are of the form 
\[
	\X_{v}:= M(\overline{C_{v}\cap \B(S)})
\] 
for $v\in\mathcal{S}:= {^M W} \setminus \left(\cup_{v\in {^M W}} \Cl(v)\right)$. In particular, $\X_{\tau} \subset \X_v$ if and only if $\tau \in \Cl(v)$.
\end{Thm}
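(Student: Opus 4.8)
The plan is to combine three ingredients: (i) the cellular decomposition $\B(S) = \bigsqcup_{w\in W} C_w\cap\B(S)$ together with the fact that each Hessenberg-Schubert cell is irreducible (Remark~\ref{rem: irreducible}); (ii) the $M$-equivariance of $\B(S)$, so that every irreducible component is $M$-invariant; and (iii) the closure-relation analysis already carried out, especially Lemma~\ref{lem: closure relations} and Lemma~\ref{lem: M-orbit closure}. First I would observe that $\B(S) = \bigcup_{v\in {^M W}} M(\overline{C_v\cap\B(S)})$: every point of $\B(S)$ lies in some $C_w\cap\B(S)$, and writing $w=yv$ with $y\in W_M$, $v\in {^M W}$, the $M$-action (via a representative of $y$) carries $C_v\cap\B(S)$ onto a subset of $C_w\cap\B(S)$ that surjects appropriately; more cleanly, the $M$-orbit of $\overline{C_v\cap\B(S)}$ already contains $C_w\cap\B(S)$ because $M(\overline{C_v\cap\B(S)})$ is closed and $M$-stable. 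So $\B(S)$ is the finite union of the closed irreducible sets $\X_v = M(\overline{C_v\cap\B(S)})$, $v\in {^M W}$ — closed by Lemma~\ref{lem: M-orbit closure} (with $K = B_M$ or the relevant parabolic stabilizing the $B$-stable set $\overline{C_v\cap\B(S)}$), and irreducible as the image of the irreducible variety $M\times\overline{C_v\cap\B(S)}$ under the action map. Hence the irreducible components of $\B(S)$ are exactly the maximal elements of $\{\X_v : v\in {^M W}\}$ under inclusion.

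Next I would pin down the containment order among the $\X_v$'s, the crux being the claim $\X_\tau\subseteq\X_v$ iff $\tau\in\Cl(v)$ (for $\tau\neq v$). The ``if'' direction is immediate from Lemma~\ref{lem: closure relations}: if $\tau\in\Cl(v)$ then $C_\tau\cap\B(S)\subseteq\overline{C_v\cap\B(S)}$, so applying $M$ and using that $\X_v$ is closed and $M$-stable gives $\X_\tau = M(\overline{C_\tau\cap\B(S)})\subseteq\X_v$. For ``only if'', suppose $\X_\tau\subseteq\X_v$. Using~\eqref{eq: description2}, $\overline{C_v\cap\B(S)}\subseteq\bigsqcup_{\sigma\in {^M W}}C_\sigma$, and since translation by $M$ moves between Schubert cells indexed by $W_M$-cosets, $\X_v = M(\overline{C_v\cap\B(S)})$ still meets each Schubert cell $C_w$ only for $w$ in a controlled set; in particular the $v\in {^M W}$ whose cell $C_v\cap\B(S)$ meets $\X_v$ nontrivially and with full dimension $|\R(v)|$ are precisely those appearing in~\eqref{eq: description}. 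Concretely, $C_\tau\cap\B(S)\subseteq\X_\tau\subseteq\X_v$ forces $C_\tau\cap\B(S)$ to meet $M(\overline{C_v\cap\B(S)})$, hence (intersecting with the single Schubert cell $C_\tau$, which is $B$-stable and $M\cap B = B_M$-related) $C_\tau\cap\B(S)$ must lie in $\overline{C_v\cap\B(S)}$ itself, and then Lemma~\ref{lem: closure relations} gives $\tau\in\Cl(v)$. I would make this last reduction precise by intersecting with a Schubert cell and using that the only $M$-translate of $\overline{C_v\cap\B(S)}$ meeting the fixed cell $C_\tau$ in a set of dimension $|\R(\tau)|$ is the one coming from $W_M$-coset bookkeeping in~\eqref{eq: description}.

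Finally, once the order is understood, the components are the $\X_v$ with $v$ maximal, i.e.\ those $v\in {^M W}$ not lying in $\Cl(v')$ for any $v'$; that is exactly the set $\mathcal{S} = {^M W}\setminus\bigl(\bigcup_{v'\in {^M W}}\Cl(v')\bigr)$. One must also check no redundancy: distinct $v,\tau\in\mathcal{S}$ give distinct components, which follows because $\X_v$ determines $\overline{C_v\cap\B(S)}$ as (a specified $M$-translate of) its dense-dimensional Schubert-cell piece, and these pieces are indexed injectively by ${^M W}$ via Proposition~\ref{prop: cell dim} / equation~\eqref{eqn:dim}. The main obstacle I anticipate is the ``only if'' step: tracking how the $M$-orbit spreads $\overline{C_v\cap\B(S)}$ across Schubert cells indexed by $W_M v'$ and arguing that containment $\X_\tau\subseteq\X_v$ can be detected on a single Schubert cell, reducing it to the already-proved Lemma~\ref{lem: closure relations}; the dimension count from~\eqref{eqn:dim} is the tool that makes this reduction rigorous, since it prevents a lower-dimensional stratum of $\X_v$ from accidentally swallowing $C_\tau\cap\B(S)$ without $\tau\in\Cl(v)$.
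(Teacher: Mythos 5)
Your proposal is correct and takes essentially the same route as the paper: cover $\B(S)$ by the closed, irreducible $M$-orbits $\X_v=M(\overline{C_v\cap\B(S)})$ for $v\in {^M W}$, and reduce the containment $\X_\tau\subseteq\X_v$ to $C_\tau\cap\B(S)\subseteq\overline{C_v\cap\B(S)}$ using Equation~\eqref{eq: description2} before invoking Lemma~\ref{lem: closure relations}. The paper makes your ``detect containment on a single Schubert cell'' step precise without any dimension count: for $gB\in C_\tau\cap\B(S)$ with $mgB\in\overline{C_v\cap\B(S)}$, write $m=b_1yb_2$ with $y\in W_M$, note that $y\neq e$ would put $mgB$ in $C_{y\tau}$ with $y\tau\notin {^M W}$, contradicting Equation~\eqref{eq: description2}, and then use $B_M$-invariance of $\overline{C_v\cap\B(S)}$ to conclude $gB\in\overline{C_v\cap\B(S)}$.
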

\begin{proof}  Let $L=L_v$ be the standard Levi subgroup associated to $R(v)$.  First, $\overline{C_v\cap \B(S)}$ with $v\in {^M W}$ is a closed subvariety of $\B(S)$ which is clearly $B_M$-invariant since both $C_v$ and $\B(S)$ are $B_M$-invariant.  By Lemma~\ref{lem: M-orbit closure}, the $M$-orbit $\X_v := M(\overline{C_v\cap \B(S)})$ is a closed subvariety of $\B(S)$.  It must also be irreducible since $\overline{C_v\cap \B(S)}$ is irreducible.  

Next we have that
\[
	M(C_v\cap \B(S)) = M(C_v)\cap \B(S) = \bigsqcup_{y\in W_M} C_{yv}\cap \B(S).
\]
On the other hand, using the Bruhat decomposition for $M$, Equation~\eqref{eq: description} implies
\[
	M(C_v\cap \B(S)) = \bigsqcup_{y\in W_M}  U^yy x_vU_{L,w_v}w_vB/B
\] 
so $C_{yv}\cap \B(S) = U^yy x_vU_{L,w_v}w_vB/B$ for all $y\in W_M$.  The previous two sentences imply 
\[
	\B(S) = \bigsqcup_{v\in {^M W}} \bigsqcup_{y\in W_M} C_{yv}\cap \B(S) \subseteq \bigcup_{v\in {^M W}} \X_v
\]
so $\B(S) = \bigcup_{v\in{^M W}} \X_v$ is a decomposition of $\B(S)$ into irreducible components. This decomposition will be unique after we remove all $\X_{\tau}$ with $\tau\in {^M W}$ such that $\X_{\tau}\subseteq \X_v$. 

We now prove that $\X_{\tau} \subset \X_v$ if and only if $C_{\tau}\cap \B(S) \subset \overline{C_{v}\cap \B(S)}$.  It is clear that if $C_{\tau}\cap \B(S) \subset \overline{C_v\cap \B(S)}$ then $\X_{\tau} \subset \X_v$.  For the opposite direction, suppose $\X_{\tau}\subset \X_v$ and consider 
\[
gB \in C_{\tau}\cap \B(S)\subseteq \X_{\tau} \subset \X_v=M(\overline{C_v\cap \B(S)}).
\]  
By assumption, there exists $m\in M$ so that $mgB \in \overline{C_v\cap \B(S)}$.  The description of $\overline{C_v\cap \B(S)}$ given in Equation \eqref{eq: description2} implies $m\in B_M$. If not, then $m=b_1{y}b_2$ for some $b_1,b_2\in B_M$ and $e\neq{y}\in W_M$ so 
\[
mgB \in b_1{y}b_2(C_{\tau}\cap \B(S)) \subseteq C_{y\tau}. 
\] 
On the other hand, $C_{y\tau}\cap (\overline{C_v\cap \B(S)})=\emptyset$ by Equation~\eqref{eq: description2} since $y\tau \notin {^M W}$, so we obtain a contradiction.  Since $m\in B_M$ and $\overline{C_v\cap \B(S)}$ is $B_M$-invariant, we conclude $gB\in \overline{C_v\cap \B(S)}$.  The description of the set $\mathcal{S}$ and final assertion of the theorem now follows from Lemma~\ref{lem: closure relations}.
\end{proof}

As in the statement of Theorem~\ref{thm: irreducible components}, we adopt the notation $\X_v:=M (\overline{C_v\cap \B(S)})$.

\begin{example}\label{ex: 2-1-1a}  Let $\mathfrak{g} = \mathfrak{gl}_4(\C)$ and $S= \text{diag}(2, 2, -1, -3)$. In this case, $M=\left< s_1 \right>$ and ${^M W}$ contains 12 elements.  Fix $v=s_2s_3s_1s_2s_1$ so $R(v) = \{ s_1, s_2 \}$, $x_v = s_2s_3$, and $w_v=s_1s_2s_1\in W_L=\left<s_1,s_2\right>$.  In the table below, we consider the set of all $\tau=x_vz$ for $z\in W_L$.  We compute the simple roots $\R(\tau)$, and display the corresponding element $z\in W_L$, and $R(z)$.
\begin{center}
$\begin{array}{c|c|c|c}
\tau\in {^M W} : \tau=x_vz & z\in W_L &  R(\tau) & R(z)  \\ \hline
s_2s_3s_1s_2s_1 &  {s_1s_2s_1} & \{\alpha_1, \alpha_2\}  & \{\alpha_1, \alpha_2\}  \\
s_2s_3s_1s_2 & {s_1s_2} & \{ \alpha_2 \} & \{\alpha_2\} \\
s_2s_3s_2s_1 & {s_2s_1} &\{ \alpha_1, \alpha_3\} & \{ \alpha_1 \}  \\
s_2s_3s_1 & {s_1} &\{\alpha_1, \alpha_3\} & \{ \alpha_1 \}  \\
s_2s_3s_2 & s_2 &\{\alpha_2,\alpha_3\} & \{ \alpha_2 \}\\
s_2s_3 & e &  \{\alpha_3\} & \emptyset  \\
\end{array}$ 
\end{center}
From the table, we see that $\Cl(v)=\{s_2s_3s_1s_2\}$ so $\X_{s_2s_3s_1s_2} \subseteq \X_v$, but this is the only $\X_{\tau}$ such that $\X_{\tau}\subset \X_v$.  Doing similar computations for each $v\in {^M W}$ shows that 
\[
\mathcal{S} = \{ s_2s_3s_1s_2s_1, s_2s_3s_2s_1, s_2s_3s_1, s_2s_3s_2\}
\]
so $\B(S)$ has four irreducible components.
\end{example}

Our next two results give an explicit description of the cellular decomposition of each irreducible component $\X_v$ and describe the dimensions of these components and $\B(S)$ combinatorially.

\begin{Cor}\label{cor: explicit description}  Suppose $v\in \mathcal{S}$ and let $v=x_vw_v$ be the decomposition of $v$ defined in Remark \ref{rem: L-decomp}.  Then
\[
	\X_v = \bigsqcup_{y\in W_M} \bigsqcup_{z\in W_L} U^y y x_v U_{L,z} zB/B.
\]
Furthermore, $\X_v = \overline{C_{y_0v} \cap \B(S)}$ where $y_0\in W_M$ is the longest element.
\end{Cor}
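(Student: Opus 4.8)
The plan is to read off the cellular decomposition of $\X_v$ by pushing forward the description of $\overline{C_v\cap\B(S)}$ in Equation~\eqref{eq: description} under the $M$-action, and then to deduce the closure identity by locating the unique top-dimensional cell. Since $\X_v=M(\overline{C_v\cap\B(S)})$ by definition and $\overline{C_v\cap\B(S)}=\bigsqcup_{z\in W_L}x_vU_{L,z}zB/B$, the first task is to compute $M(x_vU_{L,z}zB/B)$ for each $z\in W_L$. Note that $x_vU_{L,z}zB/B=\overline{C_v\cap\B(S)}\cap C_{x_vz}$, so each such piece is $B_M$-stable, being the intersection of the $B_M$-stable set $\overline{C_v\cap\B(S)}$ with the Schubert cell $C_{x_vz}$, which is $B$-stable and hence $U_M$-stable. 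Using the Bruhat decomposition $M=\bigsqcup_{y\in W_M}B_MyB_M$ together with the factorizations $U_M=U^y\cdot\prod_{\gamma\in\Phi_M^+\setminus N(y^{-1})}U_\gamma$ and $U^{yx_vz}=U^y\cdot(yU^{x_vz}y^{-1})$ — the latter following from Lemma~\ref{lem: inversion set decomp} since $x_vz\in{}^{M}W$ by Lemma~\ref{lem: cosets}, so $\ell(yx_vz)=\ell(y)+\ell(x_vz)$ — I would show, exactly as in the proof of Theorem~\ref{thm: irreducible components}, that $M(x_vU_{L,z}zB/B)=\bigsqcup_{y\in W_M}U^yyx_vU_{L,z}zB/B$, each stratum lying in the Schubert cell $C_{yx_vz}$. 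Taking the union over $z\in W_L$, the uniqueness in Lemma~\ref{lem: w decomp} shows the various $yx_vz$ are pairwise distinct, so the strata sit in distinct Schubert cells of $\B$ and the union is disjoint; this gives the first displayed formula.

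For the second assertion I would carry out a dimension count on this decomposition. Via the identification $C_{yx_vz}=U^yyC_{x_vz}$ together with Theorem~\ref{thm: shortest coset closures}, the cell $U^yyx_vU_{L,z}zB/B$ is isomorphic to $\C^{\ell(y)}\times(C_{L,z}\cap\B_L(S_v))$; since $S_v$ is regular semisimple in $\fl$, Example~\ref{ex: regular ss dimn} applied inside $\B_L$ gives its dimension as $\ell(y)+|N(z)\cap\Delta_L|=\ell(y)+|\R(z)|$, using that $N(z)\cap\Delta=N(z)\cap\Delta_L$ for $z\in W_L$. Now $\ell(y)\le\ell(y_0)$ with equality only when $y=y_0$, and $|\R(z)|\le|\Delta_L|=|\R(v)|$ with equality only when $z=w_v$, so the unique cell of maximal dimension $\ell(y_0)+|\R(v)|$ is $U^{y_0}y_0x_vU_{L,w_v}w_vB/B$, which equals $C_{y_0v}\cap\B(S)$ by the cell identification in the proof of Theorem~\ref{thm: irreducible components} (and has dimension $\ell(y_0)+|\R(v)|$, consistently, by Equation~\eqref{eqn:dim}). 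Because $\X_v$ is a finite disjoint union of locally closed affine cells, $\dim\X_v=\ell(y_0)+|\R(v)|$; since $\X_v$ is irreducible by Theorem~\ref{thm: irreducible components} while $\overline{C_{y_0v}\cap\B(S)}$ is an irreducible closed subvariety of it of the same dimension, the two coincide.

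The step I expect to require the most care is the first one: verifying that sweeping the $B_M$-stable piece $x_vU_{L,z}zB/B$ around by $M$ produces precisely the strata $U^yyx_vU_{L,z}zB/B$, with no repetitions and nothing left over. The two facts that make this work are that Schubert cells are $B$-stable, so each piece is $U_M$-stable, which lets one absorb the part of $U_M$ lying beyond $U^y$; and that $x_vz$ remains a minimal coset representative (Lemma~\ref{lem: cosets}), which confines all the resulting strata to distinct Schubert cells of $\B$. The remaining steps are transcriptions of computations already appearing in the proofs of Theorems~\ref{thm: shortest coset closures} and~\ref{thm: irreducible components}, combined with the regular semisimple dimension formula of Example~\ref{ex: regular ss dimn}.
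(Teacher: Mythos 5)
Your proof of the first displayed decomposition is essentially the paper's: both apply the Bruhat decomposition of $M$ together with $B_M$-stability to the description of $\overline{C_v\cap \B(S)}$ in Equation~\eqref{eq: description}, with your version merely supplying a bit more detail (the $B_M$-stability of each piece $x_vU_{L,z}zB/B$ and the use of Lemmas~\ref{lem: w decomp} and \ref{lem: cosets} to place the strata in pairwise distinct Schubert cells $C_{yx_vz}$). For the second assertion you take a genuinely different, and arguably more self-contained, closing step: you compute $\dim(U^yyx_vU_{L,z}zB/B)=\ell(y)+|\R(z)|$ via Theorem~\ref{thm: shortest coset closures} and Example~\ref{ex: regular ss dimn}, identify the unique top-dimensional cell with $C_{y_0v}\cap\B(S)$, and conclude $\X_v=\overline{C_{y_0v}\cap\B(S)}$ because a closed irreducible subvariety of the irreducible variety $\X_v$ of the same dimension must be all of it. The paper instead argues that $\B(S)=\bigcup_{v\in{^M W}}\overline{C_{y_0v}\cap \B(S)}$ gives a second decomposition of $\B(S)$ into irreducible closed sets and appeals to Theorem~\ref{thm: irreducible components}. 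Your route avoids having to justify that covering claim, at the price of carrying out the cell-dimension count here; in effect you establish the dimension formula of Corollary~\ref{cor: dimension} along the way (with no circularity, since that corollary is downstream of this one), whereas the paper defers it. Both arguments are correct, and the intermediate identifications you rely on ($C_{y_0v}\cap\B(S)=U^{y_0}y_0x_vU_{L,w_v}w_vB/B$, regularity of $S_v$ in $\fl$) are exactly those already proved in Theorems~\ref{thm: irreducible components} and \ref{thm: shortest coset closures}.
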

\begin{proof}  Applying the Bruhat decomposition for $M$ and using the fact that each $\X_v$ is $B_M$-stable yields the decomposition above from the description of $\overline{C_v\cap \B(S)}$ in~\eqref{eq: description}.  From the proof of Theorem \ref{thm: irreducible components}, we know $C_{y_0v}\cap \B(S) = U^{y_0}y_0x_v U_{L,w_v}w_vB/B$.  The cellular decomposition given above shows that $C_{y_0v}\cap \B(S) \subset \X_v$  for all $v\in {^M W}$, so $\overline{C_{y_0\tau} \cap \B(S)} \subseteq \X_v$ for all $\tau\in \Cl(v)$ and $\overline{C_{y_0v}\cap \B(S)} \subseteq \X_v$.   Equality follows from Theorem~\ref{thm: irreducible components} since $\B(S) = \bigcup_{v\in {^M W}}\overline{C_{y_0v} \cap \B(S)}$ is another decomposition of $\B(S)$ into irreducible components.
\end{proof}

\begin{Cor}\label{cor: dimension}  For each $v\in \mathcal{S}$, $\dim(\X_v) = \ell(y_0) + |\R(v)|$ where $y_0$ denotes the longest element of $W_M$.  In particular, 
\[
\dim(\B(S)) = \ell(y_0) + \max_{v\in \mathcal{S}}\{|R(v)|\}.
\]
\end{Cor}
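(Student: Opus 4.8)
The plan is to read off $\dim(\X_v)$ from the explicit description in Corollary~\ref{cor: explicit description} and then take a maximum over $v\in\mathcal{S}$. Fix $v\in\mathcal{S}$ and let $y_0$ be the longest element of $W_M$. By Corollary~\ref{cor: explicit description} we have $\X_v = \overline{C_{y_0v}\cap \B(S)}$, and since passing to the closure does not change dimension (and the Hessenberg-Schubert cell $C_{y_0v}\cap\B(S)$ is an affine cell by Proposition~\ref{prop: cell dim}), we get $\dim(\X_v) = \dim(C_{y_0v}\cap \B(S))$.

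The next step is to evaluate this dimension with the formula~\eqref{eqn:dim}, which requires expressing $w:=y_0v$ as a product of an element of $W_M$ and an element of ${^M W}$ in the way prescribed by Lemma~\ref{lem: w decomp}. But this is immediate: since $y_0\in W_M$ and $v\in {^M W}$, the coset $W_M(y_0v)$ equals $W_Mv$, whose unique minimal-length representative is $v$; hence $w=y_0\cdot v$ is already in reduced parabolic form, with $W_M$-part $y_0$ and ${^M W}$-part $v$. Applying~\eqref{eqn:dim} to $w=y_0v$ therefore yields
\[
\dim(\X_v) = \dim(C_{y_0v}\cap\B(S)) = |N(y_0^{-1})| + |\R(v)| = \ell(y_0) + |\R(v)|,
\]
using $|N(y_0^{-1})| = \ell(y_0)$. (If one prefers to bypass~\eqref{eqn:dim}, the same count follows from the proof of Theorem~\ref{thm: irreducible components}, which identifies $C_{y_0v}\cap\B(S)$ with $U^{y_0}y_0x_vU_{L,w_v}w_vB/B$, an affine cell of dimension $\ell(y_0)+\dim(C_{L,w_v}\cap \B_L(S_v))$; by Example~\ref{ex: regular ss dimn} applied to the regular semisimple element $S_v$ of the Levi $L=L_v$, the dimension of $C_{L,w_v}\cap\B_L(S_v)$ is the number of right descents of the longest element $w_v$ of $W_L$, which is $|\Delta_L| = |\R(v)|$.)

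Finally, by Theorem~\ref{thm: irreducible components} the equality $\B(S) = \bigcup_{v\in\mathcal{S}}\X_v$ exhibits $\B(S)$ as the union of its irreducible components, so $\dim(\B(S))$ is the maximum of the numbers $\dim(\X_v)$. Since $\ell(y_0)$ does not depend on $v$, we conclude
\[
\dim(\B(S)) = \max_{v\in\mathcal{S}}\dim(\X_v) = \ell(y_0) + \max_{v\in\mathcal{S}}\{|\R(v)|\}.
\]
There is essentially no obstacle here: the result is a bookkeeping consequence of Corollary~\ref{cor: explicit description} and the cell-dimension formula~\eqref{eqn:dim}, the only point requiring a moment's care being the observation that $y_0v$ is already written in reduced parabolic form, which is immediate because $v$ is a minimal-length coset representative for $W_M\backslash W$.
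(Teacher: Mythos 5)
Your proof is correct and follows the same route as the paper: identify $\X_v$ with $\overline{C_{y_0v}\cap\B(S)}$ via Corollary~\ref{cor: explicit description} and read off the cell dimension from Equation~\eqref{eqn:dim}, then take the maximum over the irreducible components. Your extra check that $y_0v$ is already in the parabolic form required by Lemma~\ref{lem: w decomp} is a correct (if implicit in the paper) detail, not a divergence in method.
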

\begin{proof}  The dimension of $\B(S)$ will be the maximum dimension of its irreducible components.  Combining Corollary~\ref{cor: explicit description} and Equation \eqref{eqn:dim}, the dimension of each irreducible component is $\dim(\X_v) = \dim(C_{y_0v}\cap \B(S)) = \ell(y_0) + |\R(v)|$.
\end{proof}

\begin{example}\label{ex: 2-2 part 3}  Suppose $\fg= \mathfrak{gl}_4(\C)$ and $S=\text{diag}(1,1,-1,-1)$.  The table in Example~\ref{ex: 2-2} implies that the corresponding semisimple Hessenberg variety has three irreducible components
\[
	\B(S) = \X_{s_2s_1s_3s_2} \sqcup \X_{s_2s_1s_3} \sqcup \X_{s_2}
\]
since $\mathcal{S} = \{ s_2s_1s_3s_2, s_2s_1s_3, s_2 \}$.  By Corollary~\ref{cor: dimension},
\[
\dim(\B(S)) = \dim (\X_{s_2s_1s_3})=   |\Phi_M^+ | + |R(s_2s_1s_3)| = 2+2=4.
\]
Notice that $\dim(\B(S)) > \dim(C_{w_0}\cap \B(S)) = \dim(\X_{s_2s_1s_3s_2}) = 3$.  This shows that one cannot use the intersection of the Hessenberg variety $\B(S)$ and the big open cell $C_{w_0}$ to compute the dimension of $\B(S)$, which is always true in the regular case.
\end{example}


\section{The singular locus} \label{sec: singular locus}

We now prove that each of the irreducible components of $\B(S)$ described in the previous section is smooth.  Our main tool for investigating these components are patches.  In the first portion of this section, we use similar methods as Insko and Yong in \cite{IY12} to analyze the local properties of each irreducible component.  In the second, we describe the GKM graphs of each irreducible component and the singular locus of $\B(S)$ as subgraphs of the GKM graph of $\B(S)$.  Before going further, we make the following simplifying remark. 

\begin{Rem} \label{rem: simplifications} Any two points in the same $M$-orbit of $\X_v$ have isomorphic tangent spaces.  In addition, if the $T$-fixed points of $\X_v$ are smooth points then $\X_v$ is smooth \cite[Lemma 4]{DMPS92}.  Combining these statements, our strategy is to prove that $\tau B\in \X_v$ is a smooth point for all $\tau \in {^M W}$ such that $\tau B\in \X_v$.
\end{Rem}

\subsection{Patches}  The opposite big cell $B_-B/B \cong \C^{\ell(w_0)}$ provides an affine open neighborhood of $eB\in \B$ and we obtain an affine open neighborhood $\N_g:=gB_-B/B$ of each $gB\in \B$ by translation.

\begin{Def}  Let $\X$ be a subvariety of $\B$.  The affine open neighborhood, $\mathcal{N}_{g,\X}= \N_g \cap \X$ of $gB \in \X$ is called a {\em patch of $\X$ at the point $gB$}.  
\end{Def}

Given $g \in G$ such that $g B\in \X\subseteq G/B$, we now give explicit coordinates for $\mathcal{N}_{g, \X}$ as in as in \cite[\S 3]{IY12}. Consider the projection $\pi: G \to G/B$, and let $U^-$ denote the maximal unipotent subgroup of $B_-$.  Since $\pi$ is a trivial fibration over $B_-B/B$ with fiber $B$, it admits a local section
\[
	\sigma: B_-B/B \to G
\]  
such that $\sigma(B_-B/B) = U^- \subset G$.  Similarly, $\pi$ admits a local section
\[
	\sigma_{g}: g B_-B/B \to G
\]
defined by $\sigma_{g}=g\sigma g^{-1}$ so $\sigma_g(\N_g) = gU^-$.  This provides a scheme-theoretic isomorphism $\N_g \cong g U^-$.  The section $\sigma_{g}$ identifies explicit coordinates for $\N_{g, \X}$ by restricting $\pi$ and $\sigma_{g}$ to $\X$,
\begin{eqnarray}\label{eqn: patch description}
	\N_{g, \X} = \N_g \cap \X \cong \sigma_{g} (\N_g \cap \X) \cong \pi^{-1}(\X) \cap g U^-.
\end{eqnarray}
In particular, we view $\N_{g,\X}\cong \pi^{-1}(\X)\cap gU^-$ as a subscheme of $gU^-$.  The patch of $\X$ at $gB$ can be used to investigate the local structure of $\X$.  For example, the variety $\X$ is singular at $gB$ if and only if $\N_{g,\X}$ is singular at $g$.



For the case in which $\X=\X_v$ and $g=\tau\in {^M W}$, $\N_{\tau, \X_v}$ is the subscheme of $\tau U^-$ defined by the condition that $\tau u\in \N_{\tau, \X_v}$ for $u\in U^-$ if and only if $\tau uB \in \X_v$.  We make use of the fact that the unipotent subgroup $U^{-}\subset G$ can be factored as a product of root subgroups,
\begin{eqnarray}\label{eq: U-factor}
        U^{-}\cong U_{\gamma_1}U_{\gamma_2}\cdots U_{\gamma_r}
\end{eqnarray}
where $\Phi^-=\{ \gamma_1, \gamma_2, ..., \gamma_r \}$ is any ordering of the negative roots \cite[\S 28.1]{H75} in our proof below.

\begin{Thm}\label{thm: sm-comp}  The subvariety $\X_v\subseteq \B$ is smooth. 
\end{Thm}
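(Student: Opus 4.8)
The plan is to verify smoothness at each $T$-fixed point $\tau B \in \X_v$, which by Remark~\ref{rem: simplifications} suffices. By Theorem~\ref{thm: irreducible components}, the $T$-fixed points lying in $\X_v$ are exactly those $\tau B$ with $\tau \in \{e\}$-translates occurring in the cellular decomposition of Corollary~\ref{cor: explicit description}; concretely, $\tau = y x_v z$ for some $y \in W_M$ and $z \in W_L$ with $L = L_v$. Since points in the same $M$-orbit have isomorphic tangent spaces, I may conjugate by a Weyl group element of $M$ and reduce to the case $\tau = x_v z$ with $z \in W_L$. So the goal becomes: for each $z \in W_L$, the patch $\N_{x_v z, \X_v}$ is smooth at the point $x_v z$.

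The key computational step is to write down explicit coordinates on the patch using the section $\sigma_g$ from Equation~\eqref{eqn: patch description} and the root-subgroup factorization~\eqref{eq: U-factor}. Writing $g = x_v z$ and $u = \prod_{\gamma \in \Phi^-} U_\gamma(t_\gamma) \in U^-$, the condition $g u B \in \X_v$ becomes a system of polynomial equations in the coordinates $t_\gamma$. I would exploit Theorem~\ref{thm: shortest coset closures}, which identifies $\overline{C_v \cap \B(S)} \cong \B_L(S_v)$ with a regular semisimple Hessenberg variety inside the smaller flag variety $\B_L$, and the $M$-orbit structure of Corollary~\ref{cor: explicit description}. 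The plan is to show that the patch $\N_{x_v z, \X_v}$ decomposes as a product: one factor coming from the directions transverse to the $M$-orbit (which should contribute a smooth, in fact affine-space, factor because the $M$-orbit map is a fiber bundle), and one factor isomorphic to a patch of the regular semisimple Hessenberg variety $\B_L(S_v)$ at the point $zB_L$. Since $\B_L(S_v)$ is smooth by Proposition~\ref{prop: regular} (DeMari--Procesi--Shayman), each such patch is smooth, and a product of smooth schemes is smooth.

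More precisely, I expect the following structure. The negative roots split according to $\Phi^- = (\Phi^- \cap \Phi_L^-) \sqcup (\Phi^- \setminus \Phi_L^-)$ after an appropriate twist by $x_v$; the coordinates $t_\gamma$ with $x_v^{-1}(\gamma) \in \Phi_L$ parametrize the $\B_L$-direction, while the remaining coordinates are either forced to be zero (by an argument parallel to Proposition~\ref{prop: key step}, which showed $u \in x_v U_L x_v^{-1}$ on the cell) or are free parameters coming from the $M$-translation — the latter because $\X_v = M(\overline{C_v \cap \B(S)})$ and $M$ acts freely enough near $T$-fixed points. Carrying this out rigorously, one gets $\N_{x_v z, \X_v} \cong (\text{affine space}) \times \N_{z B_L, \B_L(S_v)}$, and smoothness follows.

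The main obstacle, I expect, is bookkeeping: correctly tracking which root coordinates survive in the patch equations, which vanish identically, and which are free, and proving the product decomposition holds \emph{scheme-theoretically} (not just set-theoretically) so that smoothness of the factors transfers. The vanishing of the "extra" coordinates is the delicate part — it requires a height-induction argument in the spirit of the proof of Proposition~\ref{prop: key step}, now run inside the patch rather than inside the Schubert cell, and one must be careful that the $M$-orbit directions do not interfere with this induction. Once the local coordinates are pinned down and the product structure is established, invoking Proposition~\ref{prop: regular} for the $\B_L(S_v)$ factor finishes the proof.
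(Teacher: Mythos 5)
Your proposal is correct and follows essentially the same route as the paper: the paper proves exactly the product decomposition $\N_{\tau,\X_v}\cong U_M^-\times\N_{z,\B_L(S_v)}$ at each fixed point $\tau=x_vz$, via the explicit map $(u_1,zu_2)\mapsto u_1\tau u_2$ (well defined and bijective because $\tau^{-1}U_M^-\tau\subseteq U^-$ and $\tau^{-1}U_M^-\tau\cap U_L^-=\{e\}$), and then invokes Proposition~\ref{prop: regular} for the $\B_L(S_v)$ factor. The only difference in execution is that the height induction you anticipate re-running inside the patch is unnecessary: the forced vanishing of the remaining coordinates follows from the identification $\overline{C_v\cap\B(S)}=x_v\,\iota_L(\B_L(S_v))$ already established in Theorem~\ref{thm: shortest coset closures}, combined with the root-subgroup factorization of $U^-$.
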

\begin{proof}  Let $L=L_v$ be the Levi subgroup associated to the subset of simple roots $R(v)$ and write $v=x_vw_v$ for $w_v\in W_L$ the longest element and $x_v\in W^L$ as in the previous section.  By Corollary~\ref{cor: explicit description}, $\tau\in {^M W}$ satisfies $\tau B\in \X_v$ if and only if $\tau = x_v z$ for some $z\in W_{L}$. As in the proof of Theorem~\ref{thm: shortest coset closures}, let $S_v:= x_v^{-1}\cdot S\in \fh$, which is a regular semisimple element of $\mathfrak{l}$.  We will prove 
\[
\N_{\tau, \X_v} \cong U_M^- \times \N_{z, \B_{L}(S_v)}
\]  
where $\N_{z, \B_{L}(S_v)}$ is the patch of $\B_{L}(S_v)$ at $zB_L$.  Each element of $\N_{z, \B_{L}(S_v)}$ is of the form $zu$ for some $u\in U_L^-$ such that $zuB_L\in \B_L(S_v)$.

   
First we note that $\tau^{-1}U_M^-\tau \subseteq U^-$ and  $\tau^{-1}U_{M}^- \tau \cap U_{L}^- =\{e\}$.  The first statement is an obvious implication of the fact that $\tau\in {^M W}$.  To prove the second, it suffices to show that $\tau^{-1}(\Phi_M^-) \cap \Phi_L^- = \emptyset$.  If not, then there exists $\gamma\in \Phi_M^-$ and $\beta\in \Phi_{L}^-$ such that $\tau^{-1}(\gamma) = \beta$.  Now,
\[
	w_vz^{-1}x_v^{-1}(\gamma)=w_v\tau^{-1}(\gamma) =  w_v( \beta ) \in \Phi^+
\]
since $w_v(\Phi_L^-)\subseteq \Phi^+_{L}$ by definition of $w_v$ as the longest element of $W_L$.  On the other hand, $zw_v \in W_{L}$ so $x_v(zw_v)\in {^M W}$ by Lemma~\ref{lem: cosets} implying $w_vz^{-1}x_v^{-1}(\gamma) \in w_vz^{-1}x_v^{-1}(\Phi_M^-)\subseteq \Phi^-$, a contradiction.

Given $u_1\in U_M^-$ and $zu_2\in \N_{z, \B_{L}(S_v)}$, consider the product $u_1 \tau u_2$.  We claim that $u_1\tau u_2 \in \N_{\tau, \X_v}$.  Since $\tau^{-1}U_M^-\tau \subseteq U^-$ we have $u_1 \tau u_2\in \tau U^-$.  Furthermore, since $zu_2B_L \in \B_L(S_v)$, we get that $\tau u_2B = x_v \iota_L (zu_2B_L) \in \overline{C_v\cap \B(S)}$ by the proof of Theorem~\ref{thm: shortest coset closures}.  Therefore $u_1 \tau u_2B \in \X_v$, and our claim follows.  

Define the map
\[
	\phi: U_M^- \times \N_{z, \B_{L}(S_v)} \to \N_{\tau, \X_v}
\]
by $\phi(u_1, zu_2) = u_1 x_vz u_2  = u_1\tau u_2$ for all $u_1\in U_M^-$ and $z u_2 \in \N_{z, \B_{L}(S_v)}$.  The paragraph above shows that $\phi$ is well defined.  If $\phi(u_1,zu_2) = \phi(u_3, zu_4)$ for some $u_1,u_3\in U_M^-$ and $zu_2, zu_4 \in \N_{z, \B_{L}(S_v)}$, then
\[
	u_1 \tau u_2 = u_3 \tau u_4 \Rightarrow \tau^{-1}u_3^{-1}u_1\tau = u_4u_2^{-1} \in \tau^{-1}U_M^- \tau \cap U_L^-.
\]
Since $\tau^{-1}U_{M}^-\tau \cap U_L^- = \{e\}$, this forces $u_1=u_3$ and $u_2=u_4$ so $\phi$ is injective.  

Now suppose $\tau u\in \N_{\tau, \X_v}$, i.e. $u\in U^-$ such that $\tau u B\in \X_v$.  Using Equation~\eqref{eq: U-factor},
\[
	U^- \cong \tau^{-1}U_M^- \tau \times \prod_{\gamma\in \Phi^- :\, \tau(\gamma)\notin\Phi_M^-} U_{\gamma}
\]
so we may write $u=u_1u_2$ where $u_1\in \tau^{-1}U_M^- \tau$ and $u_2 \in \prod_{\gamma\in \Phi^- :\, \tau(\gamma)\notin\Phi_M^-} U_{\gamma}$.  Since $u_1'= \tau u_1 \tau^{-1} \in U_M^-$ and $\tau u_2 \tau^{-1} \in \prod_{\gamma\in \Phi \setminus \Phi_M} U_{\gamma}$, it follows that $\tau u B\in \X_v = M(\overline{C_v\cap \B(S)})$ if and only if $\tau u_2 B \in \overline{C_v\cap \B(S)}$.  Using the proof of Theorem~\ref{thm: shortest coset closures},
\[
	\tau u_2 B \in \overline{C_v\cap \B(S)} \Leftrightarrow zu_2B\in \overline{U_{L,w_v}w_vB/B} \Leftrightarrow zu_2B_L \in \B_L(S_v).
\]
The above equation makes sense only if $u_2\in U_L^-$ and $zu_2\in \N_{z,\B_L(S_v)}$.  Thus $(u_1' , zu_2)\in U_M^- \times \N_{z, \B_{L}(S_v)}$ such that $\phi(u_1' , zu_2) = \tau u$ and we conclude that $\phi$ is surjective.

Finally, since $\N_{z, \B_{L}(S_v)}$ is smooth at $z$ by Proposition \ref{prop: regular} and $U_M^- \cong \C^{\ell(y_0)}$ it follows that $N_{\tau, \X_v}$ is smooth at $\phi(e, z) = \tau$.
\end{proof}

We obtain the following corollary from Theorems~\ref{thm: irreducible components} and \ref{thm: sm-comp}.

\begin{Cor}\label{cor: irreducible components}  In the semisimple Hessenberg variety $\B(S)$, a nonempty intersection of any two irreducible components is singular, and the singular locus of $\B(S)$ is the union of all such intersections.   Furthermore, 
\begin{eqnarray*}
\X_v \cap \X_{\tau} = M(\overline{C_{v}\cap \B(S)}\, \cap \, \overline{C_{\tau}\cap \B(S)}).
\end{eqnarray*}
for all $v, \tau \in \mathcal{S}$.
\end{Cor}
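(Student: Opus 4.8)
The plan is to reduce both assertions to facts already established. The first claim --- that a nonempty intersection of two distinct irreducible components is contained in the singular locus, and that the singular locus is the union of these intersections --- is a general fact about varieties whose irreducible components are all smooth. Since each $\X_v$ is smooth by Theorem~\ref{thm: sm-comp}, a point $gB\in \B(S)$ lying on a unique component has a neighborhood equal (as a scheme) to a neighborhood in that smooth component, hence is a smooth point; conversely, if $gB$ lies in $\X_v\cap \X_\tau$ for distinct $v,\tau\in \mathcal S$, then the local ring of $\B(S)$ at $gB$ has at least two minimal primes and therefore cannot be regular, so $gB$ is singular. I would spell this out using the patch description in Equation~\eqref{eqn: patch description}: if $gB$ lies only in $\X_v$, then $\N_{g,\B(S)} = \N_{g,\X_v}$ near $g$, which is smooth; if $gB$ lies in two components, $\N_{g,\B(S)}$ is reducible at $g$, hence not smooth. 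This gives the first two sentences of the corollary.

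For the displayed formula $\X_v\cap\X_\tau = M\!\left(\overline{C_v\cap\B(S)}\cap\overline{C_\tau\cap\B(S)}\right)$, the inclusion $\supseteq$ is immediate since each factor is contained in the respective $M$-orbit and the intersection of $M$-stable sets is $M$-stable, so $M$ applied to the intersection of the two closures lands in $\X_v\cap\X_\tau$. The substantive direction is $\subseteq$. Here I would argue exactly as in the proof of Theorem~\ref{thm: irreducible components}: take $gB\in \X_v\cap \X_\tau$. Because every $T$-fixed point of $\X_v$ is of the form $x_vz B$ with $z\in W_L$ (Corollary~\ref{cor: explicit description}) and similarly for $\X_\tau$, and because the cellular decompositions in Equation~\eqref{eq: description} are $B_M$-stable with cells indexed by ${^M W}$, the same argument used in Theorem~\ref{thm: irreducible components} shows that if $mgB\in \overline{C_v\cap\B(S)}$ for some $m\in M$, then in fact $m$ can be chosen in $B_M$ --- any Weyl-group part of $m$ would push $gB$ into a Schubert cell $C_{y\tau'}$ with $y\tau'\notin {^M W}$, which by Equation~\eqref{eq: description2} is disjoint from $\overline{C_v\cap\B(S)}$, a contradiction. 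Thus, after translating $gB$ by a single element of $M$ (using that $M = B_M W_M B_M$ and the $B_M$-invariance of the closures), we land simultaneously in both $\overline{C_v\cap\B(S)}$ and $\overline{C_\tau\cap\B(S)}$, which shows $gB\in M\!\left(\overline{C_v\cap\B(S)}\cap\overline{C_\tau\cap\B(S)}\right)$.

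The main obstacle I anticipate is making the last sentence rigorous: given $gB\in \X_v\cap\X_\tau$, there are a priori \emph{two different} elements $m_1, m_2\in M$ with $m_1gB\in \overline{C_v\cap\B(S)}$ and $m_2gB\in \overline{C_\tau\cap\B(S)}$, and one must produce a single $m$ that works for both. The fix is to first replace $g$ by $m_1 g$, so that $gB$ itself lies in $\overline{C_v\cap\B(S)}$; then one knows $gB\in \X_\tau$, so there is $m\in M$ with $mgB\in\overline{C_\tau\cap\B(S)}$, and by the argument of Theorem~\ref{thm: irreducible components} applied to the pair $(\X_\tau, \tau)$ this $m$ lies in $B_M$; since $\overline{C_v\cap\B(S)}$ is $B_M$-invariant, $mgB$ remains in $\overline{C_v\cap\B(S)}$, so $mgB$ lies in the intersection of the two closures, giving $gB\in M\!\left(\overline{C_v\cap\B(S)}\cap\overline{C_\tau\cap\B(S)}\right)$ as required. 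The rest is bookkeeping with the $M$-equivariant cellular decompositions already in hand.
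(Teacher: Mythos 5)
Your proposal is correct and follows essentially the same route as the paper: the paper also deduces the first assertion directly from Theorems~\ref{thm: irreducible components} and \ref{thm: sm-comp}, and for the displayed equality it takes $gB=m_1g_1B=m_2g_2B$ and shows $m_1^{-1}m_2\in B_M$ via Equation~\eqref{eq: description2} and the argument of Theorem~\ref{thm: irreducible components}, then uses $B_M$-invariance of the cell closures --- which is exactly the content of your ``fix'' phrased with $m_1^{-1}m_2$ instead of a preliminary translation by $m_1$.
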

\begin{proof}  The first part of the corollary is obvious from Theorems~\ref{thm: irreducible components} and \ref{thm: sm-comp}.  The second assertion follows from the equality
\[
	M(\overline{C_v\cap\B(S)})\cap M(\overline{C_{\tau}\cap \B(S)}) = M(\overline{C_{v}\cap \B(S)}\, \cap \, \overline{C_{\tau}\cap \B(S)}).
\]
The right-hand side of the above equation is clearly a subset of the left.  We have only to show that the left-hand side is also a subset of the right.  Let $gB\in M(\overline{C_v\cap\B(S)})\cap M(\overline{C_{\tau}\cap \B(S)})$, so there exists $m_1, m_2\in M$ such that $gB=m_1g_1B=m_2g_2B$ where $g_1B\in \overline{C_v\cap \B(S)}$ and $g_2B\in \overline{C_{\tau}\cap \B(S)}$.  Now,
\[
	m_1^{-1}m_2g_2B = g_1B\in \overline{C_{v}\cap \B(S)}.
\]
It follows that $m_1^{-1}m_2\in B_M$ from Equation~\eqref{eq: description2} using a similar argument as in the proof of Theorem~\ref{thm: irreducible components}.  Therefore, $g_1B \in m_1^{-1}m_2 \overline{C_{\tau}\cap \B(S)} = \overline{C_{\tau}\cap \B(S)}$ and $gB=m_1g_1B\in M(\overline{C_v\cap \B(S)} \cap \overline{C_{\tau}\cap \B(S)})$, proving our claim.
\end{proof}

The example below illustrates that regular semisimple Hessenberg varieties corresponding to the standard Hessenberg space can be singular and not pure-dimensional.  This answers Tymoczko's Questions 5.2 and 5.4 from \cite{Tym06} in the negative, and one can use Corollary~\ref{cor: irreducible components} to identify many examples of singular and non-pure-dimensional Hessenberg varieties. More examples are included in Subsection~\ref{subsection: GKM graphs}. 
\begin{example} \label{ex: 2-2 part 4} 
Consider $\B(S) \subset GL_4(\C)/B$ for $S=\text{diag}(1,1,-1,-1)$.  In Example~\ref{ex: 2-2 part 3} we saw that $\mathcal{S} = \{ s_2s_1s_3s_2, s_2s_1s_3, s_2 \}$ and a description of the closure $\overline{C_v\cap \B(S)}$ for each $v\in \mathcal{S}$ was given in Example~\ref{ex: 2-2 part 2}.  From these, we conclude that the singular locus of $\B(S)$ is
\[
	\X_{s_2s_1s_3s_2}\cap \X_{s_2s_1s_3} = M (s_2s_1s_3B) \;\text{  and  } \; \X_{s_2s_1s_3}\cap \X_{s_2} = M (s_2B)
\]
by Corollary \ref{cor: irreducible components}.  We saw in Example~\ref{ex: 2-2 part 3} that $\B(S)$ is not pure dimensional.
\end{example}


\subsection{GKM graphs}\label{subsection: GKM graphs}
Every semisimple Hessenberg variety $\B(S,H)$ has an action of the maximal torus $T$.  In fact, $\B(S,H)$ is a {\em GKM space} and there are combinatorial methods available for computing the $T$-equivariant cohomology of these varieties (see \cite{GKM98}; \cite{Tym05} provides an overview of GKM theory and examples of such computations).  Although our proofs above do not rely on GKM theory, it is frequently convenient to use the {\em GKM graph} (or moment graph) to help visualize examples. 

We now give a description for the GKM graph of $\B(S)$ and identify the subgraph associated to each irreducible component and the singular locus.  By definition, the GKM graph of a GKM space $\X$ has a vertex set of T-fixed points of $\X$ and edges connecting two fixed points if there exists a one-dimensional $T$-orbit whose closure contains these points.  The following definition summarizes this information for $\X=\B(S)$.

\begin{Def}\label{def: GKM graph}  
The {\em GKM graph} of $\B(S)$ has vertex set $W$ and directed edges
\[
\xymatrix{
w \ar[r]^{\gamma} & v
}
\] 	
for all $v,w\in W$ such that 
\begin{enumerate}
\item $w=s_{\gamma}v$ and $\ell(v)< \ell(w)$, and 
\item either $\gamma\in \Phi_M^+$ or $w^{-1}(\gamma) \in \Delta^-$.  
\end{enumerate}
We say that $w$ is the {\em source} of the edge $\xymatrix{w \ar[r]^{\gamma} & v}$ and $v$ is the {\em target}.
\end{Def}

In the definition above, we identify each $T$-fixed point $wB\in \B$ with $w\in W$.  Each edge $\xymatrix{w \ar[r]^{\gamma} & v}$ for $w,v\in W$ satisfying (1) corresponds to the one-dimensional $T$-orbit $U_{\gamma}wB\subset \B$ whose closure contains $wB$ and $vB$.  Every one-dimensional $T$-orbit in $\B$ is of this form \cite[Lemma 2.2]{CK03}.  Therefore the GKM graph of the full flag variety is the graph with vertex set $W$ and edges satisfying property (1).  

Recall that $\B(S)^T = \B^T$ \cite[Proposition 3]{DMPS92}. Thus the vertex set of the GKM-graph for $\B(S)$ is also $W$.  Given a 1-dimensional $T$-orbit $U_{\gamma}wB$, we have 
\[
u^{-1}\cdot S  = S + \gamma(S)x_{\gamma}E_{\gamma} \textup{ for all } u=\exp(x_{\gamma}E_{\gamma}) \in U_{\gamma} \textup{ where } x_{\gamma}\in \C.
\]
Therefore $u^{-1}\cdot S\in w\cdot H_{\Delta}$ if and only if $w^{-1}(\gamma) \in \Delta^-$ or $\gamma(S)=0$.  It follows that conditions (1) and (2) from Definition \ref{def: GKM graph} give precisely those one-dimensional $T$-orbits in $\B(S)$.  This confirms that the information given in Definition \ref{def: GKM graph} is correct.  

Let $w=yv$ with $y\in W_M$ and $v\in {^M W}$ be the decomposition given in Lemma~\ref{lem: w decomp}.  By Lemma~\ref{lem: inversion set decomp}, $N(w^{-1}) = N(y^{-1})\sqcup yN(v^{-1})$.  Any edge with source vertex $w$ has label $\gamma\in N(w^{-1})$ by condition (1) and Remark \ref{rem: descents}.  Condition (2) now implies that those labels are exactly the roots in the set 
\[
N(y^{-1})\sqcup (yN(v^{-1})\cap w(\Delta^-))= N(y^{-1})\sqcup (N(v^{-1}) \cap v(\Delta^-)),
\] 
so the number of edges with source vertex $w$ is $\dim(C_w\cap \B(S))$ by Proposition~\ref{prop: cell dim}. 

\begin{example}\label{ex: GKM graph of reg ss}  Consider the regular semisimple Hessenberg variety $\B(S)$ where $S\in \fh$ is a regular semisimple element.  In this case, $M=\{e\}$ so $\Phi_M^+=\emptyset$.  Our graph includes all edges with source vertex $w$, labeled by $\gamma \in N(w^{-1})$ such that $w^{-1}(\gamma)\in \Delta^-$.  Figure \ref{fig: reg ss GKM graph} below shows the GKM graphs of the variety $\B(S)$ in $GL_3(\C)/B$ and $SP_4(\C)/B$. 

\begin{figure}[h]
$\xymatrix{& s_1s_2s_1 \ar[rd]^{\alpha_1}\ar[ld]_{\alpha_2} & & && s_1s_2s_1s_2 \ar[rd]^{\alpha_1} \ar[ld]_{\alpha_2} &\\
s_1s_2 \ar[d]_{\alpha_1+\alpha_2} & & s_2s_1\ar[d]^{\alpha_1+\alpha_2} &&s_1s_2s_1 \ar[d]_{\alpha_1+\alpha_2}&& s_2s_1s_2 \ar[d]^{2\alpha_1+\alpha_2}\\
s_1 \ar[rd]_{\alpha_1} & & s_2 \ar[ld]^{\alpha_2}&&s_1s_2\ar[d]_{2\alpha_1+\alpha_2}&&s_2s_1\ar[d]^{\alpha_1+\alpha_2} \\
& e &&&s_1 \ar[rd]_{\alpha_1} &&s_2 \ar[ld]^{\alpha_2}\\
& &&&&e&
}$
\begin{caption}{The GKM graphs of the regular semisimple variety corresponding to the standard Hessenberg space in $GL_{3}(\C)/B$ (left) and in $SP_{4}(\C)/B$ (right). } 
\label{fig: reg ss GKM graph} 
\end{caption}
\end{figure}
\end{example}

\begin{Prop}\label{prop: GKM graph of Xv}  
Given $v\in \mathcal{S}$ let $L=L_v$ be the standard Levi subgroup corresponding to $R(v)\subseteq \Delta$ and $v=x_vw_v$ be the decomposition of $v$ such that $w_v\in W_L$ is the longest element and $x_v\in W^L$.  The GKM graph of the irreducible component $\X_v$ is the induced subgraph of the GKM graph of $\B(S)$ corresponding to the vertices \[V(\X_v):=\{ yx_vz : y\in W_M, z\in W_L  \}.\]
\end{Prop}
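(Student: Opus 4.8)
The plan is to identify the $T$-fixed points of $\X_v$, record the obvious inclusion of graphs, and then prove the reverse inclusion one one-dimensional orbit at a time, using the left action of $M = Z_G(S)$ together with the identification $\overline{C_v\cap\B(S)}\cong\B_L(S_v)$ from Theorem~\ref{thm: shortest coset closures}.

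First I would pin down the vertex set. Write $L=L_v$ and $v=x_vw_v$ as in Remark~\ref{rem: L-decomp}. By Corollary~\ref{cor: explicit description}, $\X_v=\bigsqcup_{y\in W_M}\bigsqcup_{z\in W_L}U^yyx_vU_{L,z}zB/B$, each piece lying in the Schubert cell $C_{yx_vz}$. Since $zB_L\in\B_L(S_v)$ for every $z\in W_L$ (as $z^{-1}\cdot S_v\in\fh\subseteq H_{R(v)}$), the identity lies in $U^y$ and in $U_{L,z}$, so the $(y,z)$-piece contains the unique $T$-fixed point $yx_vzB$ of $C_{yx_vz}$, and the elements $yx_vz$ are pairwise distinct by Lemma~\ref{lem: w decomp} and Lemma~\ref{lem: cosets}. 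Hence $\X_v^T=V(\X_v)=W_Mx_vW_L$, and moreover $V(\X_v)\cap{^M W}=x_vW_L$, because every element of $W_Mx_vW_L$ has $^MW$-part in $x_vW_L$ by Lemma~\ref{lem: cosets} and the uniqueness in Lemma~\ref{lem: w decomp}. As $\X_v\subseteq\B(S)$ is a closed, $T$-stable subvariety, every one-dimensional $T$-orbit of $\X_v$ is a one-dimensional $T$-orbit of $\B(S)$ whose two fixed points lie in $\X_v^T=V(\X_v)$, so the GKM graph of $\X_v$ is already a subgraph of the induced subgraph of the GKM graph of $\B(S)$ on $V(\X_v)$; the content of the proposition is the reverse containment of edges.

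For that I would take a one-dimensional $T$-orbit $O$ of $\B(S)$ whose two fixed points lie in $V(\X_v)$ and show $O\subseteq\X_v$. Since $M$ acts on $\B(S)$ preserving $\X_v$ (Theorem~\ref{thm: irreducible components}) and $V(\X_v)=W_Mx_vW_L$ is stable under left multiplication by $W_M$, I may replace $O$ by a $W_M$-translate; writing one fixed point of $O$ as $yv'$ with $y\in W_M$ and $v'\in{^M W}$ and translating by $y^{-1}$ reduces us to the case that one fixed point of $O$ is $x_vz$ for some $z\in W_L$ (using $v'\in V(\X_v)\cap{^M W}=x_vW_L$). Then $O=U_\gamma x_vzB$ for the unique root $\gamma$ with $(x_vz)^{-1}(\gamma)\in\Phi^-$; set $\alpha:=-(x_vz)^{-1}(\gamma)\in\Phi^+$, so the other fixed point of $O$ is $x_vz\,s_\alpha\in V(\X_v)$, and $O\subseteq\B(S)$ forces, by Definition~\ref{def: GKM graph} and the computation following it, that $\alpha\in\Delta$ or $\gamma(S)=0$. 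If $\gamma(S)=0$ then $\gamma\in\Phi_M$ (by the defining property of $\Phi_M$), so $U_\gamma\subseteq M$ and $O=U_\gamma x_vzB\subseteq M\cdot x_vzB\subseteq\X_v$. In the remaining case $\alpha\in\Delta$, I claim $\alpha\in\Delta_L=R(v)$; granting this, $-z(\alpha)\in\Phi_L$ and $\gamma=x_v(-z(\alpha))$, whence $U_\gamma=x_vU_{-z(\alpha)}x_v^{-1}$ and $O=U_\gamma x_vzB=x_vU_{-z(\alpha)}zB=x_v\iota_L(U_{-z(\alpha)}zB_L)$. The one-dimensional $T$-orbit $U_{-z(\alpha)}zB_L$ of $\B_L$ lies in $\B_L(S_v)$ because $z^{-1}(-z(\alpha))=-\alpha\in\Delta_L^-$, which is precisely the defining condition for the standard Hessenberg space $H_{R(v)}$ of $L$; hence $O\subseteq x_v\iota_L(\B_L(S_v))=\overline{C_v\cap\B(S)}\subseteq\X_v$ by Equation~\eqref{eq: description} and the proof of Theorem~\ref{thm: shortest coset closures}.

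The main obstacle is the claim $\alpha\in\Delta_L$, which I would prove by contradiction using inversion sets. If $\gamma\in\Phi^+$ then $\alpha\in R(x_vz)$ and $x_vz=(x_vzs_\alpha)s_\alpha$ with lengths adding, so Lemma~\ref{lem: inversion set decomp} gives $N((x_vz)^{-1})=N((x_vzs_\alpha)^{-1})\sqcup\{\gamma\}$; hence $N((x_vzs_\alpha)^{-1})\subseteq N((x_vz)^{-1})\subseteq\Phi^+\setminus\Phi_M^+$, so $x_vzs_\alpha\in{^M W}\cap V(\X_v)=x_vW_L$, which forces $\alpha\in\Delta_L$. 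If $\gamma\in\Phi^-$ then $x_vzs_\alpha=(x_vz)s_\alpha$ with lengths adding and Lemma~\ref{lem: inversion set decomp} gives $N((x_vzs_\alpha)^{-1})=N((x_vz)^{-1})\sqcup\{-\gamma\}$; if $\alpha\notin\Delta_L$ then $x_vzs_\alpha\notin{^M W}$ (else it lies in $x_vW_L$, forcing $\alpha\in\Delta_L$), so $-\gamma\in\Phi_M^+$, whence $\gamma\in\Phi_M$ and $\gamma(S)=0$, contradicting this case. This step is exactly where the two parabolic data must be reconciled inside the double coset $W_Mx_vW_L$ — $W_M$ coming from the $M$-action and $W_L$ from $\overline{C_v\cap\B(S)}\cong\B_L(S_v)$ — the key point being that right multiplication of an element $x_vz\in{^M W}$ by a simple reflection keeps us in $^MW$ unless the associated root slips into $\Phi_M$. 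A shorter but less self-contained alternative would use smoothness instead: $\X_v$ is smooth (Theorem~\ref{thm: sm-comp}), projective, with isolated $T$-fixed points and pairwise non-proportional weights inherited from $\B$, so its GKM graph is regular of valence $\dim\X_v=\ell(y_0)+|R(v)|$ (Corollary~\ref{cor: dimension}), and one would then only need to check that the induced subgraph on $V(\X_v)$ has valence at most $\dim\X_v$ at each vertex, a purely combinatorial count.
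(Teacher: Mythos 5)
Your proposal is correct and follows essentially the same route as the paper's proof: the vertex set comes from Corollary~\ref{cor: explicit description}, edge labels lying in $\Phi_M$ are absorbed by the $M$-action, labels in the $x_v$-translated $L$-directions land in $\overline{C_v\cap \B(S)}\cong \B_L(S_v)$ via Theorem~\ref{thm: shortest coset closures}, and the remaining directions are excluded because the second endpoint would leave $V(\X_v)$. The only difference is organizational: you first translate by $W_M\subseteq M$ so that one fixed point is $x_vzB$ and then argue inside the double coset $W_Mx_vW_L$ (allowing $\gamma$ of either sign), whereas the paper works directly at a general source vertex $w=yx_vz$ by splitting $N(w^{-1})=N(y^{-1})\sqcup yN(x_v^{-1})\sqcup yx_vN(z^{-1})$ into the three corresponding cases.
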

\begin{proof}  Our proof relies on the description of $\X_v$ given in Corollary~\ref{cor: explicit description}.  By this corollary, $wB\in \X_v$ if and only if  $w= yx_v z$ for some $y\in W_M$ and $z\in W_L$.  Thus $V(\X_v)$ as defined above is indeed the vertex set for the GKM graph of $\X_v$.  It remains to show that any edge in the GKM graph of $\B(S)$ between two vertices in $V(\X_v)$ corresponds to a one-dimensional $T$-orbit in $\X_v$.

Suppose $\gamma$ labels an edge between two vertices in $V(\X_v)$, with source  vertex $w=y\tau$ for some $y\in W_M$ and $\tau=x_v z\in {^M W}$ with $z\in W_L$.  Applying Lemma~\ref{lem: inversion set decomp} twice we get 
\[
N(w^{-1}) = N(y^{-1}) \sqcup yN(\tau^{-1}) = N(y^{-1}) \sqcup yN(x_v^{-1})\sqcup yx_v N(z^{-1}).
\]
Given $\gamma\in N(w^{-1})$, $\gamma$ must be an element of either $N(y^{-1})$, $yN(x_v^{-1})$, or $yx_v N(z^{-1})$.  We consider each of these three cases below.  

If $\gamma \in N(y^{-1})\subseteq \Phi_M^+$, then  $U_{\gamma} \subset U_M$ so $U_{\gamma} w B \subset \X_v$.  Now suppose that $\gamma \in yx_v N(z^{-1})$ and let $\beta = x_v^{-1}y^{-1}(\gamma)$ so $\beta \in N(z^{-1}) \subseteq \Phi_L^+$.  By condition (2) in Definition \ref{def: GKM graph} we have $z^{-1}(\beta) \in \Delta^-$ and since $z\in W_L$, $z^{-1}(\beta) \in \Delta^-\cap \Phi_L = \Delta_L^-$.  Thus $U_{\beta}zB_L \subseteq \B_L(S_v)$ so  $U_{\beta}\subseteq U_{L,z}$.  It follows that $U_{\gamma}wB = yx_vU_{\beta}zB \subset \X_v$ by Corollary~\ref{cor: explicit description}.

Finally, consider the case in which $\gamma \in yN(x_v^{-1})$.  We have that $s_{\gamma}w = y s_{y^{-1}(\gamma)} x_v z$.  Since $y^{-1}(\gamma)\in N(x_v^{-1})$, $\ell(s_{y^{-1}(\gamma)}x_v)<\ell(x_v)$ by Remark \ref{rem: descents}.  In particular, $s_{\gamma}w$ cannot be written in the form $y'x_v z'$ for some $y'\in W_M$ and $z'\in W_L$. Thus $s_{\gamma}w \notin V(\X_v)$ in this case, violating our assumption that the edge labeled by $\gamma$ has a target vertex in $V(\X_v)$.  We conclude that this case will never occur.
\end{proof}

We can now describe the GKM graph of the singular locus of $\B(S)$.

\begin{Cor}\label{cor: GKM graph of the singular locus}  
For all distinct elements $v, \tau \in \mathcal{S}$, define
\[
	V(v, \tau) = V(\X_v)\cap V(\X_{\tau})
\]
and let $\Gamma_{v,\tau}$ be the induced subgraph of the GKM graph of $\B(S)$ associated to the vertex set $V(v,\tau)$.  The GKM graph of the singular locus of $\B(S)$ is the union of subgraphs $\Gamma_{v,\tau}$ for all distinct $v,\tau \in\mathcal{S}$.
\end{Cor}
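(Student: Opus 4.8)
The plan is to combine Corollary~\ref{cor: irreducible components}, which identifies the singular locus set-theoretically as $\bigcup_{v\neq \tau\in\mathcal S}(\X_v\cap\X_\tau)$, with Proposition~\ref{prop: GKM graph of Xv}, which describes the GKM graph of each $\X_v$ as the induced subgraph on $V(\X_v)$. Since the singular locus is a $T$-stable closed subvariety of the GKM space $\B(S)$ (it is a union of $M$-orbit closures, hence closed, and $T$-stable since each $\X_v$ is), it is itself a GKM space whose GKM graph has vertex set equal to its $T$-fixed points. So the first step is to compute those fixed points: $\big(\bigcup_{v\neq\tau}(\X_v\cap\X_\tau)\big)^T = \bigcup_{v\neq\tau}(\X_v\cap\X_\tau)^T = \bigcup_{v\neq\tau}\big(\X_v^T\cap\X_\tau^T\big) = \bigcup_{v\neq\tau} V(v,\tau)$, using that taking fixed points commutes with finite unions and intersections, and that $\X_v^T=V(\X_v)$ by Proposition~\ref{prop: GKM graph of Xv}. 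This shows the vertex set of the GKM graph of the singular locus is $\bigcup_{v\neq\tau}V(v,\tau)$, which is exactly the vertex set of $\bigcup_{v\neq\tau}\Gamma_{v,\tau}$.

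Next I would check the edges. A one-dimensional $T$-orbit $U_\gamma wB$ lies in the singular locus if and only if it lies in some intersection $\X_v\cap\X_\tau$. One direction is immediate: if the orbit lies in $\Gamma_{v,\tau}$ for some pair, both its endpoints are in $V(v,\tau)\subseteq V(\X_v)$, and by Proposition~\ref{prop: GKM graph of Xv} the edge is then an edge of the GKM graph of $\X_v$, so the orbit lies in $\X_v$; symmetrically it lies in $\X_\tau$, hence in the singular locus. For the converse, suppose $U_\gamma wB$ is a one-dimensional $T$-orbit contained in the singular locus. Its closure is $T$-stable and irreducible, so its generic point lies in a single irreducible component of the singular locus — but more usefully, the orbit closure $\overline{U_\gamma wB}=U_\gamma wB\cup\{wB,s_\gamma wB\}$ is connected and contained in $\bigcup_{v\neq\tau}(\X_v\cap\X_\tau)$, and since each $\X_v$ is closed, the orbit itself (being irreducible) is contained in at least one $\X_v\cap\X_\tau$; then both endpoints lie in $V(\X_v)\cap V(\X_\tau)=V(v,\tau)$ and the edge appears in $\Gamma_{v,\tau}$. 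So the edge sets match as well, and the GKM graph of the singular locus is precisely $\bigcup_{v\neq\tau}\Gamma_{v,\tau}$.

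The one point requiring a little care — and the place I'd expect to do real work rather than bookkeeping — is justifying that an irreducible $T$-variety contained in a finite union of closed sets must lie in one of them, and more importantly that the \emph{induced}-subgraph structure is the right notion: I must make sure there is no edge of the GKM graph of $\B(S)$ joining two vertices of $\bigcup V(v,\tau)$ that fails to lie in the singular locus, i.e. that the union of induced subgraphs $\Gamma_{v,\tau}$ is genuinely an induced subgraph computation pair-by-pair and not globally. This is exactly why the corollary is phrased as a union of the $\Gamma_{v,\tau}$ rather than ``the induced subgraph on $\bigcup V(v,\tau)$'': an edge between $u\in V(v,\tau)$ and $u'\in V(v',\tau')$ with $\{v,\tau\}\neq\{v',\tau'\}$ need not correspond to a $T$-orbit in the singular locus. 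So the proof should emphasize that each edge of the singular locus' graph lies in $\Gamma_{v,\tau}$ for a \emph{single} pair, which follows from the argument above that the orbit closure, being irreducible, sits inside one intersection $\X_v\cap\X_\tau$.

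In summary: (i) identify the singular locus as $\bigcup_{v\neq\tau}(\X_v\cap\X_\tau)$ via Corollary~\ref{cor: irreducible components}; (ii) compute its $T$-fixed set as $\bigcup_{v\neq\tau}V(v,\tau)$ using Proposition~\ref{prop: GKM graph of Xv} and the fact that $(\,\cdot\,)^T$ commutes with finite unions and intersections; (iii) match edges by observing that a one-dimensional $T$-orbit in the singular locus is irreducible, hence contained in a single $\X_v\cap\X_\tau$, and then invoking Proposition~\ref{prop: GKM graph of Xv} in both $\X_v$ and $\X_\tau$; (iv) conclude the GKM graph of the singular locus is $\bigcup_{v\neq\tau}\Gamma_{v,\tau}$. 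The only subtlety is the distinction between a pairwise union of induced subgraphs and a single induced subgraph, which the irreducibility argument in step (iii) resolves.
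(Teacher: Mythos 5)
Your argument is correct and is exactly the route the paper intends: the corollary is stated without proof as an immediate consequence of Corollary~\ref{cor: irreducible components} (singular locus $=\bigcup_{v\neq\tau}\X_v\cap\X_\tau$) and Proposition~\ref{prop: GKM graph of Xv}, and your steps (fixed points commute with finite unions/intersections; an irreducible one-dimensional $T$-orbit in a finite union of closed sets lies in a single $\X_v\cap\X_\tau$; the induced-subgraph property applied in both $\X_v$ and $\X_\tau$) supply precisely the bookkeeping the paper leaves implicit. Your remark on why the statement is a union of the $\Gamma_{v,\tau}$ rather than one global induced subgraph is a correct and worthwhile clarification.
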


\begin{example} \label{ex: 2-2 part 5} Figure \ref{fig: figure 2-2 part 5} shows the GKM graph of $\B(S) \subset GL_4(\C)/B$ for $S=\text{diag}(1,1,-1,-1)$.  The GKM graph for each of the distinct irreducible components $\X_v$ for $v\in \mathcal{S} = \{ s_2s_1s_3s_2, s_2s_1s_3, s_2 \}$ is a different color; $\X_{s_2s_1s_3s_2}$ is red, $\X_{s_2s_1s_3}$ is blue, and $\X_{s_2}$ is yellow.  The GKM graphs of the intersections between these irreducible components are highlighted correspondingly: $\X_{s_2s_1s_3s_2}\cap \X_{s_2s_1s_3}=M(s_2s_1s_3B)$ is violet and $\X_{s_2s_1s_3}\cap \X_{s_2}=M(s_2B)$ is green.  Together, these form the GKM graph of the singular locus of $\B(S)$ which was calculated in Example~\ref{ex: 2-2 part 4}.  To make the graph easier to read, we have suppressed the specific label of each of the vertices and the edge labels, except for the vertices in ${^M W} = \{s_2s_1s_3s_2, s_2s_1s_3, s_2s_1, s_2s_3, s_2,e\}$.  

\begin{figure}[h]
$\xymatrix{&&& {\color{red}w_0} \ar@[red][dl] \ar@[red][d]\ar@[red][dr]&&&\\
& & {\color{RedViolet}\bullet}\ar@[blue][dl] \ar@[blue][d] \ar@[RedViolet][dr] \ar@[RedViolet][drr] & {\color{red}\bullet} \ar@[red][d] \ar@[red][drr] & {\color{red}\bullet} \ar@[red][d] \ar@[red][dr]&& \\
& {\color{blue}\bullet} \ar@[blue][dr]\ar@[blue][d] \ar@[blue][dl] & {\color{blue}\bullet} \ar@[blue][dll] \ar@[blue][dr]\ar@[blue][drr] & {\color{RedViolet}\bullet} \ar@[blue][dll]\ar@[blue][dl]\ar@[RedViolet][drr]& {\color{RedViolet}\bullet} \ar@[blue][dl] \ar@[blue][d] \ar@[RedViolet][dr] & {\color{red}s_2s_1s_3s_2} \ar@[red][d] & \\
 {\color{ForestGreen}\bullet } \ar@[YellowOrange][dr] \ar@[ForestGreen][drr] \ar@[ForestGreen][drrr]  & {\color{blue}\bullet} \ar@[blue][dr] \ar@[blue][drrr]& {\color{blue}\bullet} \ar@[blue][d] \ar@[blue][drr] & {\color{blue}\bullet} \ar@[blue][dl]\ar@[blue][drr]& {\color{blue}\bullet} \ar@[blue][dr] \ar@[blue][dl] & {\color{RedViolet}s_2s_1s_3} \ar@[blue][dl] \ar@[blue][d]&\\
& {\color{YellowOrange}\bullet} \ar@[YellowOrange][dr] \ar@[YellowOrange][drr] & {\color{ForestGreen}\bullet} \ar@[YellowOrange][d]\ar@[ForestGreen][drr] & {\color{ForestGreen}\bullet} \ar@[YellowOrange][d] \ar@[ForestGreen][dr] & {\color{blue}s_2s_1}\ar@[blue][d]&  {\color{blue}s_2s_3} \ar@[blue][dl]& \\
& & {\color{YellowOrange}\bullet}\ar@[YellowOrange][dr] & {\color{YellowOrange}\bullet}\ar@[YellowOrange][d] & {\color{ForestGreen} s_2} \ar@[YellowOrange][dl]&& \\
& &&{\color{YellowOrange}e}&&&
}
$
\begin{caption}{The GKM graph of $\B(S)\subseteq GL_4(\C)/B$ for $S=\text{diag}(1,1,-1,-1)$.}
 \label{fig: figure 2-2 part 5}
\end{caption}
\end{figure}
\end{example}

We close this section with two more examples.

\begin{example}\label{ex: 2-1-1b} Continuing Example~\ref{ex: 2-1-1a}, we consider the semisimple Hessenberg variety $\B(S) \subseteq GL_4(\C)/B$ corresponding to $S= \text{diag}(2, 2, -1, -3)$.  Instead of the full GKM graph of $\B(S)$, Figure \ref{fig: 2-1-1} displays the the GKM graph for $\bigsqcup_{v\in {^M W}} C_v\cap \B(S)$ (the induced subgraph corresponding to $^MW$). From Example~\ref{ex: 2-1-1a} we know
\[
\mathcal{S} = \{ s_2s_3s_1s_2s_1, s_3s_2s_3s_1, s_2s_3s_2, s_2s_3s_1  \}.
\]
Now Proposition \ref{prop: GKM graph of Xv} and Corollary \ref{cor: GKM graph of the singular locus} compute the GKM graphs of $\X_v$ for each $v\in \mathcal{S}$ and their intersections, respectively. The GKM graph of the intersections between the closures of the cells $\overline{C_v\cap \B(S)}$ for $v\in \mathcal{S}$ are highlighted in red in Figure \ref{fig: 2-1-1}.  This is the induced subgraph corresponding to the vertices $\bigcup_{v,\tau\in \mathcal{S}} \left(V(v,\tau)\cap {^M W}\right)$.  The singular locus of $\B(S)$ consists of the $M$-orbit of these intersections by Corollary~\ref{cor: irreducible components}.

\begin{figure}[h] 
$\xymatrix{& {\color{black}s_2s_3s_1s_2s_1} \ar@[black][dl]_{\alpha_3} \ar@[black][dr]^{\alpha_1+\alpha_2}&&&\\
{\color{black}s_2s_3s_1s_2}\ar@[black][d]_{\alpha_1+\alpha_2+\alpha_3}& & {\color{red}s_3s_2s_3s_1} \ar@[red][d]_{\alpha_1+\alpha_2+\alpha_3} \ar[dr]^{\alpha_2}&& \\
{\color{red}s_2s_3s_1} \ar[d]_{\alpha_2+\alpha_3} \ar@[red][dr]^{\alpha_1+\alpha_2} & & {\color{red}s_3s_2s_3} \ar@[red][dl]_{\alpha_3} \ar@[red][dr]^{\alpha_2}  & {\color{black}s_3s_2s_1} \ar@[black][d]^{\alpha_1+\alpha_2+\alpha_3} & \\
s_2s_1 \ar[dr]_{\alpha_1+\alpha_2}& {\color{red}s_2s_3} \ar@[red][d]^{\alpha_2+\alpha_3}  & & {\color{red}s_3s_2} \ar[d]^{\alpha_2+\alpha_3}\\
& {\color{red}s_2}\ar@[black][dr]^{\alpha_2} && s_3 \ar[dl]_{\alpha_3}& \\
& &{\color{black}e} && 
}
$
\begin{caption}{The induced subgraph corresponding to ${^M W}$ of the GKM graph of $\B(S)\subseteq GL_4(\C)/B$ for $S=\text{diag}(2,2,-1,-3)$.}
\label{fig: 2-1-1}
\end{caption}
\end{figure}

\end{example}

For our last example, we consider a (non-regular) semisimple Hessenberg variety in $SP_{4}(\C)/B$. 
  
\begin{example}\label{ex: 2-2 in SP} Let $S=\text{diag}(1,1,-1,-1)\in \fh\subset \mathfrak{sp}_4(\C)$, so $M=\left<  s_1 \right>$ and 
\[
	{^M W}=\{e, s_2, s_2s_1, s_2s_1s_2\}.
\]
In this case, $\mathcal{S}={^M W}-\{e\}$ since $\overline{C_v\cap \B(S)} \cong \mathbb{P}^1$ for all $v\in {^M W}$ such that $v\neq e$.  The point $eB = C_e\cap \B(S)$ is contained in $\overline{C_{s_2}\cap \B(S)}$.  Figure \ref{fig: 2-2 in SP} shows the GKM graph of each irreducible component of $\B(S)$ highlighted a different color; $\X_{s_2s_1s_2}$ is red, $\X_{s_2s_1}$ is blue, and $\X_{s_2}$ is yellow.  The GKM graphs of the intersections of these irreducible components are highlighted correspondingly: $\X_{s_2s_1s_2}\cap \X_{s_2s_1}$ is violet and $\X_{s_2s_1}\cap \X_{s_2}$ is green.  

\begin{figure}[h]
$\xymatrix{& {\color{red}s_1s_2s_1s_2} \ar@[red][rd]^{\alpha_1} \ar@[red][ld]_{\alpha_2} &\\
{\color{RedViolet}s_1s_2s_1} \ar@[RedViolet][drr]^{\alpha_1} \ar@[blue][d]_{\alpha_1+\alpha_2}&& {\color{red}s_2s_1s_2} \ar@[red][d]^{2\alpha_1+\alpha_2}\\
{\color{ForestGreen}s_1s_2} \ar@[ForestGreen][drr]^{\alpha_1} \ar@[YellowOrange][d]_{2\alpha_1+\alpha_2}&& {\color{RedViolet}s_2s_1}\ar@[blue][d]^{\alpha_1+\alpha_2} \\
{\color{YellowOrange}s_1} \ar@[YellowOrange][rd]_{\alpha_1} &&{\color{ForestGreen}s_2} \ar@[YellowOrange][ld]^{\alpha_2}\\
&{\color{YellowOrange}e}&
}$
\begin{caption}{The GKM graph of $\B(S)\subseteq SP_4(\C)/B$ for $S=\text{diag}(1,1,-1,-1)$.} 
\label{fig: 2-2 in SP} 
\end{caption}
\end{figure}
\end{example}


\section{Examples and Applications}  \label{sec: examples}
Many of the results proved in this paper began as conjectures formed using CoCalc (Sage) to analyze patch ideals in Type $A$ (when $G=GL_n(\C)$) following methods pioneered by Woo and Yong for Schubert varieties and Insko and Yong and Abe, Dedieu, Galetto, and Harada for regular nilpotent Hessenberg varieties \cite{ADGH16,IY12,WY08,WY12}. In this section we will:
\begin{itemize}
\item provide examples of such computations,
\item describe how to apply these computational techniques to study geometric properties of other semisimple Hessenberg varieties, and
\item provide an example which shows that the results of the previous two sections may fail for semisimple Hessenberg varieties that do not correspond to the standard Hessenberg space. 
\end{itemize}

In this section we fix the algebraic group $G=GL_n(\C)$.  Let $H\subseteq \mathfrak{gl}_n(\C)$ denote a Hessenberg space, and $S$ be a diagonal matrix in $\mathfrak{gl}_n(\C)$.  In this case, there exists a unique weakly increasing function $h: \{1,2,..., n\}\to \{1,2,..., n\}$ with $j\leq h(j)$ for all $1\leq j \leq n$ such that
\[
H= \{ A =[a_{ij}] \in \mathfrak{gl}_n(\C) : a_{ij}=0 \text{ for all } i>h(j) \}.
\]  
The equation above defines a bijective correspondence between Hessenberg spaces and all weakly increasing functions $h: \{1, 2, ..., n\}\to \{1,2,...,n\}$ such that $j\leq h(j)$ for all $1\leq j\leq n$.  We call any such function $h$ a {\em Hessenberg function} and denote it by $(h(1), h(2),..., h(n))$.  We will use this notation whenever it is convenient.

\begin{example}  When $H=H_{\Delta}$ is the standard Hessenberg space as in Example~\ref{ex: standard hess} the corresponding Hessenberg function is $h(i)=i+1$ for all $1\leq i \leq n-1$ and $h(n)=n$, or $(2,3,..., n-1, n,n)$.
\end{example}

Using the description given in Equation~\eqref{eqn: patch description}, the patch of $\B(S,H)$ at $wB$ is 
\[ \N_{w,\B(S,H)} \cong \left\{ wu \in wU^- : A= u^{-1}w^{-1} \cdot S\in H   \right\}.\]
Let $u$ denote a generic invertible lower-triangular unipotent matrix in $U^{-}$, and take the coordinates of $U^{-}$ to be
$\{x_{ij}: i>j \}$. Then $A= u^{-1}w^{-1} \cdot S \in H$ if and only if $a_{ij} =0$ for $i>h(j)$, where each $a_{ij}$ is a polynomial function in the $x_{ij}$-variables.  We define the ideal $I_{w,\B(S,H)} \subset \C[U^{-}]$ to be  $I_{w,\B(S,H)} := \langle a_{ij} : i > h(j)\rangle$.  If this ideal is radical, then $I_{w,\B(S,H)} = I(\N_{w,\B(S,H)})$ is called the \emph{patch ideal} for $\B(S,H)$ at $wB$.  

\begin{example} \label{ex: ideal calc} 
The standard Hessenberg space in $\mathfrak{gl}_4(\C)$ corresponds to the Hessenberg function $h=(2,3,4,4)$.  Let $S = \textup{diag}(1,1,-1,-1)$ and $w=s_2$.  The reader will recognize $\B(S)$ as the semisimple Hessenberg variety appearing in many of the examples from Section \ref{sec: irreducible components}.
To obtain the patch ideal $I_{s_2,\B(S)}$, we compute,
\begin{eqnarray*}
	A =  u^{-1}w^{-1} \cdot S &=& \begin{bmatrix} 1 & 0 & 0 &0 \\x_{21} & 1 & 0 & 0 \\  x_{31} & x_{32} & 1 & 0\\ x_{41} & x_{43} & x_{42} & 1 \end{bmatrix}^{-1} \begin{bmatrix} 1 & 0 & 0 & 0\\ 0 & -1 & 0 & 0\\ 0 & 0 & 1 & 0\\ 0 & 0 & 0 & -1 \end{bmatrix}\begin{bmatrix} 1 & 0 & 0 &0 \\x_{21} & 1 & 0 & 0 \\  x_{31} & x_{32} & 1 & 0\\ x_{41} & x_{43} & x_{42} & 1 \end{bmatrix}\\
	&=&\begin{bmatrix} 1 & 0 & 0 & 0\\ -2x_{21} & -1 & 0 & 0\\ 2x_{21}x_{32} & 2x_{32} & 1 & 0\\ -2x_{21}x_{32}x_{43}+2x_{21}x_{42}-2x_{41} &  -2x_{32}x_{43} & -2x_{43} & -1\end{bmatrix}
\end{eqnarray*}
and require that  
\begin{itemize}
\item $a_{31} = 2  x_{21} x_{32}=0$, 
\item $a_{42}= -2x_{32} x_{43}=0$, and
\item $a_{41} = -2x_{21}x_{32}x_{43}+2x_{21}x_{42}-2x_{41} =0$.
\end{itemize} 
These vanishing conditions define the ideal
\[
I_{w,\B(S)} = \langle g_{41}, g_{31}, g_{42}\rangle = \langle 
  x_{21}x_{42} - x_{41}, x_{21}x_{32},
  x_{32}x_{43} \rangle .
  \] 
In the equation above we have simplified the polynomial $a_{41}$ to the generator $g_{41}$ by subtracting a multiple of $a_{31}$ from $a_{41}$.  One can verify that $I_{s_2,\B(S)}$ is radical as these three generators form a square-free Gr{\"o}bner basis for $I_{s_2,\B(S)}$ \cite[Exercise 18.9]{Eis95}. Hence $I(\N_{s_2, \B(S)}) = I_{s_2, \B(S)}$ is the patch ideal for $\B(S)$ at $s_2B$.

Using the Jacobian criterion, we note that $s_2B$ is a singularity in the patch $\N_{s_2,\B(S)}$
because the Jacobian matrix of partial derivatives of $I_{s_2,\B(S)}$ has rank $1$ when evaluated at the origin, 
but it has rank 3  when evaluated at a generic point in $\N_{s_2,\B(S)}$. 

Computing a primary decomposition of $I_{s_2,\B(S)}$, we also see that the point $s_2B$ is contained in two irreducible components corresponding to the primary ideals 
$I_1=  \langle x_{32}, x_{21}x_{42} - x_{41} \rangle$ and 
$I_2= \langle x_{43}, x_{41}, x_{21} \rangle $.
The component $\mathcal{V}(I_1)$ corresponding to $I_1$ has dimension 4 (codimension 2), and 
the component $\mathcal{V}(I_2)$ corresponding to $I_2$ has dimension 3 (codimension 3).  This confirms our calculations in Example \ref{ex: 2-2 part 2} which showed  
\[
s_2B\in (\overline{C_{s_2s_3s_1}\cap \B(S)}) \cap (\overline{C_{s_2}\cap \B(S)}) \subset \X_{s_2s_3s_1}\cap \X_{s_2}
\] 
where $\dim(\X_{s_2s_3s_1}) = 4$ and $\dim(\X_{s_2})=3$ by Corollary \ref{cor: dimension}.
 
Completing similar calculations at all elements of $^MW$, we see that this Hessenberg variety has singular locus: $M(s_2B) \cup M(s_2s_1s_3B)$. We therefore recover the results of Example~\ref{ex: 2-2 part 4} using explicit calculations involving the patch ideal.
\end{example}

When applying computational techniques to study patch ideals, one needs to show that the ideals $I=I_{w, \B(S,H)}$ are radical to conclude that the schemes defined by those ideals are reduced (namely, $\N_{w, \B(S,H)}$).  
We have encountered two ways of doing this in the literature:
\begin{enumerate}
\item If the scheme $\textup{Spec } \C[U^{-}]/I$ is Gorenstein and generically reduced, then it is reduced. Thus the ideal $I$ is radical
\cite{ADGH16,IY12}.
\item If $I$ has a Gr{\"o}bner basis with  square-free lead terms, then $I$ is radical \cite[Exercise 18.9]{Eis95}.  
\end{enumerate}
When $H$ is the standard Hessenberg space, one can prove that (2) always holds.
   
\begin{Lem}  For a semisimple Hessenberg variety corresponding to the standard Hessenberg space $H_{\Delta}$ in Lie type $A$, the ideals $I_{w,\B(S)}$ for each $w\in W$ all have a square-free Gr{\"o}bner basis and are therefore radical. In particular, $I_{w,\B(S)}$ is the patch ideal of $\B(S)$ at $wB$.  
\end{Lem}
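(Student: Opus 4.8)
Write $D := w^{-1}\cdot S = \mathrm{diag}(d_1,\dots,d_n)$; this is diagonal because $S$ is diagonal and $w$ is a permutation matrix. Let $u\in U^-$ be the generic unipotent lower triangular matrix with free entries $x_{ij}$ for $i>j$. Since $A=u^{-1}Du$ is lower triangular, comparing $(i,j)$-entries in the identity $uA=Du$ (using $u_{ii}=1$, $u_{ij}=x_{ij}$) gives the recursion
\[
	a_{ij} = (d_i-d_j)\,x_{ij} - \sum_{k=j+1}^{i-1} x_{ik}\,a_{kj}\qquad(i>j).
\]
First I would feed this recursion back into itself: since $a_{j+1,j}=(d_{j+1}-d_j)x_{j+1,j}$, one gets $a_{ij}\equiv g_{ij}$ modulo the generators $a_{kj}$ with $j+2\le k<i$, where $g_{ij}:=(d_i-d_j)x_{ij}-(d_{j+1}-d_j)x_{i,j+1}x_{j+1,j}$. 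By induction on $i-j$, this yields
\[
	I_{w,\B(S)} = \big\langle\, g_{ij}\ :\ 1\le j,\ j+2\le i\le n\,\big\rangle,
\]
so $I_{w,\B(S)}$ is generated by binomials, each a $\mathbb{C}$-combination of a single variable $x_{ij}$ and a square-free quadratic $x_{i,j+1}x_{j+1,j}$ (possibly with a zero coefficient).

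Next I would fix the lexicographic order on $\mathbb{C}[U^-]$ with $x_{ij}\succ x_{k\ell}$ whenever $i-j>k-\ell$ (ties broken arbitrarily). Since $i-j\ge 2$ for every generator while $x_{i,j+1}$ and $x_{j+1,j}$ have smaller ``depth'' $i-j-1$ and $1$, the leading term of $g_{ij}$ is the variable $x_{ij}$ if $d_i\ne d_j$, is the square-free monomial $x_{i,j+1}x_{j+1,j}$ if $d_i=d_j\ne d_{j+1}$, and $g_{ij}=0$ if $d_i=d_j=d_{j+1}$. The plan is to run Buchberger's algorithm from $\{g_{ij}\}$ and show it produces a Gr\"obner basis with all leading terms square-free. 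Two generators whose leading terms are distinct variables have coprime leading terms, so their $S$-polynomial reduces to zero by the coprimality criterion; this leaves only the pairs in which some leading term is a quadratic $x_{i,j+1}x_{j+1,j}$, and a short inspection shows two leading terms can overlap only in a handful of shapes (sharing a subdiagonal variable $x_{j+1,j}$, or $x_{k\ell}=x_{i,j+1}$, which forces $\ell=j-1$). In each case the $S$-polynomial, after the forced cancellation, is a scalar times a single monomial, and reducing it further only ever calls on the one nontrivial reduction available, the ``staircase substitution'' $x_{ic}\mapsto(\mathrm{const})\,x_{i,c+1}x_{c+1,c}$ coming from the leading term of $g_{ic}$ (which requires $d_i\ne d_c$).

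The main obstacle is to show that this process preserves square-freeness of the monomials occurring. I would prove, by induction along the run of the algorithm, that every such monomial has the form $x_{ic}\cdot x_{c,c-1}x_{c-1,c-2}\cdots$: a single non-subdiagonal factor in some row $i$, followed by a contiguous run of distinct subdiagonal variables just below its column. A staircase substitution applied to such a monomial produces $x_{i,c+1}x_{c+1,c}\cdot x_{c,c-1}\cdots$, again of this form, since $x_{c+1,c}$ extends the run and $x_{i,c+1}$ is a new variable not already present; hence every monomial that appears is a product of pairwise distinct variables. A chain of substitutions terminates either when it reaches a free subdiagonal variable $x_{i,i-1}$ (leaving a square-free ``staircase'' monomial $x_{i,i-1}x_{i-1,i-2}\cdots$) or when it reaches an $x_{ic}$ with $d_i=d_c$ (again leaving a square-free monomial, which is then irreducible). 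In all cases the $S$-polynomial reduces to $0$ or to a binomial with square-free monomials and square-free leading term, and by Noetherianity Buchberger's algorithm halts. Thus $I_{w,\B(S)}$ has a Gr\"obner basis with square-free leading terms, and by \cite[Exercise 18.9]{Eis95} it is radical; in particular $I_{w,\B(S)}=I(\N_{w,\B(S)})$ is the patch ideal of $\B(S)$ at $wB$.
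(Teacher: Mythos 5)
Your proposal is correct in substance and follows the same overall strategy as the paper: pass to the simplified binomial generators $g_{ij}=(d_i-d_j)x_{ij}-(d_{j+1}-d_j)x_{i,j+1}x_{j+1,j}$, order the variables so that those farther from the diagonal are larger, and deduce radicality from a square-free Gr\"obner basis via \cite[Exercise 18.9]{Eis95}. The genuine difference is at the decisive step: the paper's outline asserts that the $g_{ij}$ themselves already form a Gr\"obner basis (each $S$-polynomial is claimed to reduce to zero against the pair producing it), whereas you run Buchberger's algorithm to completion and argue that the only elements it can add are square-free ``staircase'' monomials. Your extra caution is not merely stylistic -- it is needed: for instance if $w^{-1}\cdot S=\mathrm{diag}(1,2,3,1)$ in $\mathfrak{gl}_4(\C)$, the simplified generators are $2x_{31}-x_{21}x_{32}$, $x_{21}x_{42}$ and $x_{42}+x_{32}x_{43}$, and the $S$-polynomial of the last two is (up to scalar) $x_{21}x_{32}x_{43}$, which lies in the ideal but is divisible by none of the leading terms $x_{31}$, $x_{21}x_{42}$, $x_{42}$; so the $g_{ij}$ are not a Gr\"obner basis there, though adjoining this square-free monomial completes one. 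Thus your handling of the completion is exactly what carries the lemma in such cases, and it buys an argument that is robust where the pairwise-reduction claim can fail, at the cost of the bookkeeping induction you describe. When you write it up, tighten a few small points: the reduced $S$-polynomials are single square-free monomials, not binomials; the terminal monomial of a substitution chain need not be irreducible (it can be absorbed by a monomial generator $g_{ab}$ with $d_a=d_b$, which only helps); the overlap ``$x_{k\ell}=x_{i,j+1}$'' should force $k=i$, $\ell=j+1$; and your induction should also dispose of the $S$-pairs involving the newly added monomials, which is immediate because the $S$-polynomial of two monomials vanishes and an added staircase monomial contains no variable that is the leading term of a binomial generator. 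None of these affects the conclusion.
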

\begin{proof}[Outline of the Proof]  When $H$ is the standard Hessenberg space, the ideal $I_{w,\B(S)}$ is generated by polynomials $a_{ij}$ for $i>h(j) = j+1$ determined by the entries of the matrix $A=u^{-1}w^{-1}\cdot S$ for $u\in U^-$ a generic element.  

One then uses these polynomials to obtain simplified generators $g_{ij}$ for $i>j+1$, as in Example~\ref{ex: ideal calc}.  Each generator $g_{ij}$ has the form $g_{ij} = c_{ij} x_{ij} \pm c_{k} x_{kj}x_{ik}$ for  $j<k<i$ where $c_{ij}$ and $c_k$ could be zero.  We order the variables $x_{ij}$ of the ring $\C[U^{-}]$ by first giving preference to those furthest from diagonal and then breaking ties lexicographically with respect to the first index so $x_{k \ell}<x_{ij}$ if $i <k$.  Fix the lexicographic monomial ordering determined by this total order on the variables in $\C[U^-]$.

We now apply Buchberger's algorithm to the generators $g_{ij}$ in order to construct a Gr{\"o}bner basis of $I_{w, \B(S)}$ (see \cite[\S 7, Theorem 2]{CLO15}).  For each pair of generators $g_{ij},g_{k  \ell }$ of the patch ideal $I_{w,\B(S)}$, the $S$-polynomial $S(g_{ij},g_{k  \ell })$ has remainder zero when divided by $\{ g_{ij},g_{k  \ell }\}$. Thus the generators $\{g_{ij} : i>j+1 \}$ form a Gr{\"o}bner basis for $I_{w,\B(S)}$ that is square-free. 
\end{proof}

While we have verified that the ideals $I_{w,\B(S,H)}$ for $w\in W$ are radical for any semisimple Hessenberg variety in $GL_n(\C)/B$ with $n\leq 5$, we do not know an argument to prove this fact more generally. 

\begin{Conj}
The ideal $I_{w,\B(S,H)}$ defined above is radical for any semisimple Hessenberg variety $\B(S,H)$, and is therefore the patch ideal of $\B(S,H)$ at $wB$. 
\end{Conj}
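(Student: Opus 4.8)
The plan is to prove radicality by a flat Gr{\"o}bner degeneration, after first recording an equivalent formulation. The opposite big cell $C^e := U^-\cdot eB = B_-B/B$ is an open neighborhood of $eB$, and since $v^{-1}U^v v\subseteq U^-$ for every $v\in W$ we have $C_v = U^v vB/B\subseteq vC^e = \N_v$; as the Schubert cells $C_v$ cover $\B$, so do the patches $\N_v$ with $v\in W$. Hence the conjecture is equivalent to the statement that the scheme $\B(S,H)$ --- the zero scheme in $\B$ of the section $gB\mapsto [g,\,(g^{-1}\cdot S)+H]$ of the vector bundle $G\times_B(\fg/H)$, locally cut out by the entries $a_{ij}$ with $i>h(j)$ --- is \emph{reduced}. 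For regular semisimple $S$ this is automatic: by Proposition~\ref{prop: regular} the variety is then smooth of the expected codimension $\dim(\fg/H)$, so the generating set $\{a_{ij}: i>h(j)\}$ is a regular sequence cutting out a reduced local complete intersection; the content of the conjecture is that reducedness survives when $S$ degenerates and $\B(S,H)$ acquires excess dimension. The Gorenstein-plus-generically-reduced route of \cite{ADGH16, IY12} is not available in this generality, because $\B(S,H)$ need not be equidimensional (Corollary~\ref{cor: dimension}, Example~\ref{ex: 2-2 part 3}) whereas Gorenstein --- indeed Cohen--Macaulay --- schemes are; one needs an argument tolerant of non-equidimensional, non-Cohen--Macaulay behavior, and a Gr{\"o}bner degeneration to a squarefree monomial ideal is exactly such an argument, since $\mathrm{in}(I)$ squarefree forces $I$ radical by \cite[Exercise 18.9]{Eis95}.

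For the set-up, write $S_w := \Ad(w^{-1})(S)$, again diagonal, and let $u=(x_{ij})\in U^-$ be generic. From $uA = S_w u$ with $A = u^{-1}\cdot w^{-1}\cdot S$ one reads off the recursion
\[
A_{jj} = (S_w)_{jj},\qquad A_{ij} = \big((S_w)_{ii}-(S_w)_{jj}\big)x_{ij} - \sum_{j<k<i} x_{ik}A_{kj}\quad(i>j),
\]
from which an easy induction shows that the only monomial of $A_{ij}$ divisible by $x_{ij}$ is the linear one, with coefficient $(S_w)_{ii}-(S_w)_{jj}$, and every other monomial is supported on variables $x_{k\ell}$ with $j\le\ell$, $k\le i$, $(k,\ell)\neq(i,j)$. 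Order the variables of $\C[U^-]$ by distance from the diagonal with lexicographic tie-breaking on the row index, as in the proof sketch of the preceding Lemma, and use the induced lex order. The first task is to pass from the generators $a_{ij}$ ($i>h(j)$) to \emph{reduced} generators $g_{ij}$: when $(S_w)_{ii}\neq(S_w)_{jj}$ one rescales so that $x_{ij}$ occurs with coefficient $1$ and subtracts multiples of the other $a_{k\ell}$ to clear any remaining monomial that is itself a lead term; when $(S_w)_{ii}=(S_w)_{jj}$ the variable $x_{ij}$ drops out and $g_{ij}$ is of ``determinantal'' type. Only the pattern of coincidences among $\{(S_w)_{11},\dots,(S_w)_{nn}\}$ matters here, so for fixed $(H,w)$ there are finitely many cases; equivalently one may work with the eigenvalues of $S$ as generic parameters while keeping track of which differences $(S_w)_{ii}-(S_w)_{jj}$ are forced to vanish.

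The decisive step is to verify, via Buchberger's criterion, that $\{g_{ij}: i>h(j)\}$ is a Gr{\"o}bner basis with squarefree lead terms: for every pair $g_{ij}, g_{k\ell}$ the $S$-polynomial must reduce to zero modulo the $g$'s. In the standard Hessenberg case the reduced generators have the three-term shape $g_{ij}=c_{ij}x_{ij}\pm c_m x_{mj}x_{im}$, which is what makes that computation feasible (see the preceding Lemma); for a general Hessenberg function the supports of the $g_{ij}$ are controlled by the lattice paths from $(i,j)$ to the diagonal that stay inside the shape of $h$, so more monomials appear and the reductions are correspondingly intricate. I would organize the verification as an induction on $n$ --- the bottom row of $A$ enters the recursion only through the generators $a_{nj}$, so one can eliminate the variables $x_{nj}$ for which $(S_w)_{nn}\neq(S_w)_{jj}$, reducing to an $(n-1)\times(n-1)$ patch ideal together with the residual bottom-row equations indexed by the $j$ with $(S_w)_{nn}=(S_w)_{jj}$ --- combined with a case analysis on the relative position of $(i,j)$ and $(k,\ell)$ and on the coincidence pattern of $S_w$. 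The main obstacle is exactly closing this Buchberger computation uniformly: finding the right combinatorial bookkeeping for the monomial supports of the $g_{ij}$ attached to an arbitrary pair $(h,w)$, and showing that every $S$-polynomial reduces to zero in all coincidence patterns simultaneously. A complete execution would, as a bonus, produce an explicit simplicial complex depending on $h$ and $w$ whose Stanley--Reisner scheme is the Gr{\"o}bner degeneration of the patch $\N_{w,\B(S,H)}$.
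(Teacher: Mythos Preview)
The first thing to flag is that the statement you are attempting to prove is a \emph{Conjecture} in the paper, not a theorem: the authors do not give a proof, only the supporting evidence that (i) for the standard Hessenberg space a squarefree Gr{\"o}bner basis exists (their preceding Lemma), and (ii) computer checks confirm radicality for all $(S,H)$ in $GL_n(\C)/B$ with $n\le 5$. So there is no proof in the paper to compare your proposal against.

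Evaluated on its own terms, your proposal is a well-organized strategy but not a proof. You correctly reduce the conjecture to reducedness of the scheme-theoretic zero locus, correctly observe why the Gorenstein/complete-intersection route is blocked in the non-regular case, and adopt the same squarefree-initial-ideal approach the authors use for $H=H_\Delta$. However, the entire content of the argument lies in the step you yourself label ``the decisive step'' and then ``the main obstacle'': verifying Buchberger's criterion for the reduced generators $g_{ij}$ uniformly over all $(h,w)$ and all coincidence patterns of the eigenvalues of $S_w$. You describe how you would \emph{organize} this verification (induction on $n$, elimination of the bottom-row variables, case analysis) but do not carry it out, and you acknowledge explicitly that you have not found ``the right combinatorial bookkeeping'' to close it. That is not a minor technical gap: it is the whole conjecture. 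For general $h$ the generators $g_{ij}$ no longer have the three-term shape that makes the $H_\Delta$ case tractable, and there is no a priori reason the $S$-polynomials must reduce to zero --- indeed, if they always did, the authors would presumably have stated a theorem rather than a conjecture.

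In short: your framework is the natural one and matches the paper's approach in the special case it does handle, but what you have written is a research plan, not a proof. To turn it into one you would need to actually execute the Buchberger verification (or find a different route to radicality that sidesteps it).
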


Once we know that $I_{w,\B(S,H)}$ is the patch ideal of $\B(S,H)$ at $wB$, this ideal can be used investigate the local structure of $\B(S,H)$ at $wB$; such as whether or not $wB$ is a singular point.  Figure \ref{fig: interesting cases} lists some geometric properties of semisimple Hessenberg varieties in $GL_n(\C)/B$ for $3 \leq n \leq 4$.

\begin{figure}[h]
\begin{center}
\begin{tabular}{|l|c|c|c|c|} \hline
Hess. fun.          & Jordan Blocks of $S$    & Singular   & Irreduc.              & Equidimensional \\ \hline
(2,3,3)             & (2,1)            & Yes        & No          & Yes              \\ \hline
(2,3,4,4)           & (3,1)            & Yes        & No          & Yes              \\ \hline
(2,4,4,4)           & (3,1)            & Yes        & No          & No               \\ \hline
(3,4,4,4)           & (3,1)            & Yes        & No          & Yes              \\ \hline
(2,3,4,4)           & (2,2)            & Yes        & No          & No              \\ \hline
(2,4,4,4)           & (2,2)            & Yes        & No          & No               \\ \hline
(3,4,4,4)           & (2,2)            & Yes        & Yes         & Yes              \\ \hline
(2,3,4,4)           & (2,1,1)          & Yes        & No          & No              \\ \hline
(2,4,4,4)           & (2,1,1)          & Yes        & No          & Yes              \\ \hline
(3,4,4,4)           & (2,1,1)          & Yes        & Yes         & Yes              \\ \hline
\end{tabular}
\end{center}
\begin{caption}{Some geometric properties of semisimple Hessenberg varieties in $GL_n(\C)/B$ for $3 \leq n \leq 4$.}
\label{fig: interesting cases}
\end{caption}
\end{figure}

This table also shows that some semisimple Hessenberg varieties are irreducible and singular (unlike those Hessenberg varieties associated to the standard Hessenberg space), as the following computation illustrates.

\begin{example} 
Let $S = \textup{diag}(1,1,-1,-1)$ and $w=s_2s_1$.  The Hessenberg space corresponding to $h=(3,4,4,4)$ is $H=\fg-\fg_{-\theta}$ where $\theta$ denotes the root of maximum height in $\Phi$.  To obtain the polynomial generators defining the ideal $I_{w,\B(S,H)}$, we compute
$A= ( u^{-1}w^{-1})\cdot S $ and require that  
\[ a_{41} = 2x_{21}x_{32}x_{43} - 2x_{21}x_{42} - 2x_{31}x_{43} = 0 \]

This vanishing condition defines the ideal
\[I_{w,\B(S,H)} =   \langle 2x_{21}x_{32}x_{43} - 2x_{21}x_{42} - 2x_{31}x_{43} \rangle\] 
which is radical since it is generated by a single irreducible polynomial (as can be verified with CoCalc). The Jacobian criterion now implies that $wB=s_2s_1B$ is a singularity in the affine patch $\N_{w,\B(S,H)}$ because the Jacobian matrix of partial derivatives has rank $0$ at the origin whereas the codimension of $\N_{w,H}$ in $\N_{w,\B}$ is $1$.  As $I_{w,\B(S,H)}$ is a primary ideal, the variety is irreducible at this singularity.  Completing similar computations at each $T$-fixed point of  $\B(S,H)$ shows that the variety has only one irreducible component, and eight singular $T$-fixed points corresponding the elements of the set
$\{ s_1s_2s_3s_2,  s_3s_1s_2s_1,s_1s_2s_3, s_1s_2s_1, s_2s_3s_2, s_3s_2s_1, s_2s_1,s_2s_3\}$.

\end{example}



\bibliographystyle{alpha}

\end{document}